\documentclass[reqno]{amsart}
\usepackage{graphicx}
\usepackage{caption}
\usepackage{subcaption}
\usepackage{color}
\usepackage{pdfpages}
\usepackage{svg}
\usepackage{tikz}

\usepackage{amssymb}
\usepackage{mathrsfs}
\usepackage{enumitem}
\usepackage{hyperref}
\usepackage{isomath}
\usepackage[toc,page]{appendix}
\usepackage{multicol}
\usepackage{seqsplit}

\begin{document}

\newtheorem{thm}{Theorem}[section]
\newtheorem*{thm1}{Theorem}
\newtheorem{lem}[thm]{Lemma}
\newtheorem{cor}[thm]{Corollary}
\newtheorem{add}[thm]{Addendum}
\newtheorem{prop}[thm]{Proposition}
\theoremstyle{definition}
\newtheorem{defn}[thm]{Definition}
\newtheorem{claim}[thm]{Claim}
\newtheorem*{mainthm}{Theorem 1.1}
\theoremstyle{remark}
\newtheorem{rmk}[thm]{Remark}
\newtheorem{ex}[thm]{Example}
\newtheorem{conj}[thm]{Conjecture}

\newtheorem{algo}[thm]{Algorithm}

\newcommand{\CC}{\mathbb{C}}
\newcommand{\RR}{\mathbb{R}}
\newcommand{\DD}{\mathbb{D}}

\newcommand{\ZZ}{\mathbb{Z}}
\newcommand{\QQ}{\mathbb{Q}}
\newcommand{\NN}{\mathbb{N}}
\newcommand{\FF}{\mathbb{F}}
\newcommand{\PP}{\mathbb{P}}
\newcommand{\CmodTwoPiIZ}{{\mathbf C}/2\pi i {\mathbf Z}}
\newcommand{\Cnozero}{{\mathbf C}\backslash \{0\}}
\newcommand{\Cinfty}{{\mathbf C}_{\infty}}
\newcommand{\RPnminustwo}{\mathbb{RP}^{n-2}}

\newcommand{\SLtwoC}{\mathrm{SL}(2,{\mathbb C})}
\newcommand{\SLtwoR}{\mathrm{SL}(2,{\mathbb R})}
\newcommand{\PSLtwoC}{\mathrm{PSL}(2,{\mathbb C})}
\newcommand{\PSLtwoR}{\mathrm{PSL}(2,{\mathbb R})}
\newcommand{\SLtwoZ}{\mathrm{SL}(2,{\mathbb Z})}
\newcommand{\PSLtwoZ}{\mathrm{PSL}(2,{\mathbb Z})}
\newcommand{\PGLtwoQ}{\mathrm{PGL}(2,{\mathbb Q})}

\newcommand{\A}{{\mathcal A}}
\newcommand{\B}{{\mathcal B}}
\newcommand{\C}{{\mathcal X}}
\newcommand{\D}{{\mathcal D}}
\newcommand{\E}{{\mathcal E}}
\newcommand{\F}{{\mathcal F}}

\newcommand{\OO}{{\mathcal O}}

\newcommand{\MCG}{\mathcal{MCG}}
\newcommand{\EE}{\mathbb{E}^2}
\newcommand{\HH}{\mathbb{H}^2}
\newcommand{\HHH}{\mathbb{H}^3}
\newcommand{\TTT}{\mathcal{T}}
\newcommand{\QQQ}{\mathcal{Q}}
\newcommand{\SSS}{\mathcal{S}}
\newcommand{\tr}{{\hbox{tr}\,}}
\newcommand{\Hom}{\mathrm{Hom}}
\newcommand{\Aut}{\mathrm{Aut}}
\newcommand{\Inn}{\mathrm{Inn}}
\newcommand{\Out}{\mathrm{Out}}
\newcommand{\SL}{\mathrm{SL}}
\newcommand{\BQ}{\rm{BQ}}
\newcommand{\Id}{\rm{Id}}
\newcommand{\Width}{\rm{Width}}

\newcommand{\setn}{{[n]}}
\newcommand{\powern}{{P(n)}}
\newcommand{\nck}{{C(n,k)}}

\newcommand{\hatI}{{\hat{I}}}
\newcommand{\TkDelta}{{T^{|k|}(\Delta)}}
\newcommand{\vecDelta}{{\vec{\Delta}_{\phi}}}

\newcommand{\Tabstwo}{{T^{|2|}(\Delta)}}
\newcommand{\Tnminusone}{{T^{|n-1|}(\Delta)}}
\newcommand{\Hur}{{\mathcal{H}}}
\newcommand{\JW}{{\hbox{JW}}}
\newcommand{\wt}{{\hbox{wt}}}
\newcommand{\Deltaone}{{\Delta^{(1)}}}
\newcommand{\Deltatwo}{{\Delta^{(2)}}}
\newcommand{\Deltathree}{{\Delta^{(3)}}}
\newcommand{\Deltan}{{\Delta^{(n)}}}

\newenvironment{pf}{\noindent {\it Proof.}\quad}{\square \vskip 10pt}

\title[Pseudomodular groups]{Hyperbolic jigsaws and families  of pseudomodular groups II}
\author[B. Lou, S.P. Tan and A.D. Vo ]{Beicheng Lou,  Ser Peow Tan and  Anh Duc Vo }

\address{Department of Applied Physics \\
Stanford University \\
USA} \email{lbc45123@hotmail.com}
\address{Department of Mathematics \\
	National University of Singapore \\
	Singapore 119076} \email{mattansp@nus.edu.sg}
\address{Department of Mathematics \\Harvard University \\
	USA} \email{ducvo@math.harvard.edu}

\subjclass[2000]{}


\date{\today}

%
%

 \begin{abstract}
	In our previous paper \cite{LTV}, we introduced a hyperbolic jigsaw construction and constructed infinitely many non-commensurable,  non-uniform, non-arithmetic lattices of $\PSLtwoR$ with cusp set $\QQ \cup \{\infty\}$ (called pseudomodular groups by Long and Reid in \cite{LR}), thus answering a question posed by Long and Reid.  In this paper, we continue with our study of these  jigsaw groups exploring questions of arithmeticity, pseudomodularity, and also related pseudo-euclidean and continued fraction algorithms arising from these groups. We also answer another question of Long and Reid in \cite{LR} by demonstrating a recursive formula for the tessellation of the hyperbolic plane arising from Weierstrass groups which generalizes the well-known ``Farey addition'' used to generate the Farey tessellation.

\end{abstract}

%

 \maketitle
 \tableofcontents

 \vspace{10pt}
\section{Introduction} In \cite{LR}, Long and Reid showed the existence of four non-commensurable {\it pseudo-modular} surfaces/groups, using punctured torus groups. These are {\em non-arithmetic} surfaces/lattices which share the same  cusp set as the modular surface $\HH/\PSLtwoZ$, that is, the cusp set is all of $\QQ \cup \{\infty\}$. In \cite{LTV}, we introduced a construction of ideal polygons ({\it hyperbolic jigsaws}) by glueing ideal triangles with marked points on the sides ({\it jigsaw tiles}) along matching sides. Such an ideal polygon ({\it jigsaw}) $J$ is the fundamental domain for the subgroup $\Gamma_J$ of $\PSLtwoR$ generated by the $\pi$-rotations about the marked points on the sides of $J$. The resulting surface is complete provided that the tiles satisfy a balancing condition. By using sets of tiles ({\it Jigsaw sets}) of two particular types ($\SSS(1,2):=\{\triangle(1,1,1), \triangle(1,1/2,2) \}$ and $\SSS(1,3):=\{\triangle(1,1,1), \triangle(1,1/3,3)\}$), we were able to construct infinitely many non-commensurable pseudomodular surfaces, using either of these two jigsaw sets. 

In this follow-up paper, we continue our study of the jigsaw construction, and further explore  the questions of non-arithmeticity and pseudomodularity of the lattices constructed from more general jigsaw sets, as well as the  pseudo-euclidean algorithms and generalized continued fractions arising from these lattices with a view of developing a more general theory.  We also answer a question raised in \cite{LR} concerning the recursive formula for the tessellation of $\HH$ arising from punctured torus groups. As far as possible, we will adopt the notation of \cite{LTV} to avoid confusion. We will recall these as necessary in the subsequent sections. 

We first start with the question of Long and Reid. A punctured torus group $\Gamma<\PSLtwoR$ together with a ``normalized'' ideal quadrilateral fundamental domain $Q$ gives rise to a tessellation  $\QQQ$ of the hyperbolic plane by ideal quadrilaterals which are translates of $Q$ by $\Gamma$. Long and Reid asked for a recursive formula for $\QQQ$, similar to that for the Farey tessellation of the plane. By considering Weierstrass groups which are extensions of the punctured torus groups, we obtain triangulations $\TTT$ of $\HH$  which can be described recursively from the parameters. The recursion for the tessellation by quadrilaterals can then be easily deduced. Recall that the Weierstrass group $\Gamma(k_1,k_2,k_3)$ where $k_1k_2k_3=1$ is the Coxeter group generated by
\begin{equation*}
\Small
\iota_1 = \frac{1}{\sqrt{k_1}}\left(
\begin{array}{cc}
k_1 & 1+k_1 \\
-k_1 & -k_1 \\
\end{array}
\right), \enskip
\iota_2 =
\frac{1}{\sqrt{k_2}} \left(
\begin{array}{cc}
1 & 1 \\
-(k_2+1) & -1 \\
\end{array}
\right), \enskip
\iota_3 =
\frac{1}{\sqrt{k_3}} \left(
\begin{array}{cc}
0 & k_3 \\
-1 & 0 \\
\end{array}
\right).\end{equation*}

\medskip

\begin{thm} {\rm{(Theorem \ref{thm:recursion})}}
Let $\Gamma:=\Gamma(k_1,k_2,k_3)$ be a Weierstrass group and $\TTT$ be the triangulation of $\HH$ obtained by taking the translates of the fundamental triangle $\triangle_0:=(\infty, -1, 0)$ by $\Gamma$. Let $(a,b,c)=g\cdot \triangle_0$ be a level $n$ triangle,  where $g\in \Gamma=\langle ~ \iota_1,\iota_2,\iota_3  ~|~\iota_1^2=\iota_2^2=\iota_3^2=\hbox{Id}~\rangle$ is a word with reduced word length $n$, and $a=g(\infty)$, $b=g(-1)$, $c=g(0)$. 
Let $T_1$, $T_2$ and $T_3$ be the triangles of $\TTT$ adjacent to $(a,b,c)$, where $T_i$ shares the side marked by $k_i$ with $(a,b,c)$. Then
\begin{equation}\label{eqn:recursion}
\begin{split}
    T_{1} & = \left(b,~a,~~\frac{k_{1}b(c-a)+a(c-b)}{k_{1}(c-a)+(c-b)}\right)
     \\
     T_{2} &=\left(\frac{k_{2}c(a-b)+b(a-c)}{k_{2}(a-b)+(a-c)},~~c,~b\right)
    \\
    T_{3} &= \left(c,~~\frac{k_{3}a(b-c)+c(b-a)}{k_{3}(b-c)+(b-a)},~a\right).
    \end{split}
\end{equation}

\end{thm}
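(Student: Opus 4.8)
The plan is to turn the statement into a single explicit Möbius‑transformation computation. The key structural input, which I would take from the fundamental‑domain description of $\Gamma$ (Poincaré's theorem applied to $\triangle_0$, as set up in \cite{LTV}): $\triangle_0=(\infty,-1,0)$ is a fundamental domain for $\Gamma$ whose side‑pairing transformations are the three half‑turns $\iota_1,\iota_2,\iota_3$, with $\iota_i$ the half‑turn about the marked point on the side of $\triangle_0$ carrying the label $k_i$. Concretely, reading each $\iota_i$ as a fractional linear map, $\iota_1$ interchanges $\infty\leftrightarrow-1$ and fixes a point of the geodesic $(\infty,-1)$, $\iota_2$ interchanges $-1\leftrightarrow0$ and fixes a point of $(-1,0)$, and $\iota_3$ interchanges $0\leftrightarrow\infty$ and fixes a point of $(0,\infty)$, which pins down the labelling. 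It follows that the tile of $\TTT$ sharing with $g\cdot\triangle_0$ the side labelled $k_i$ is exactly $g\iota_i\cdot\triangle_0$: it is distinct from $g\cdot\triangle_0$, and it lies across the common geodesic because $\iota_i\in\PSLtwoR$ is an orientation‑preserving half‑turn fixing that geodesic setwise while swapping its two ideal endpoints. With the labelling convention of the theorem (a tile $h\cdot\triangle_0$ has ordered vertices $(h(\infty),h(-1),h(0))$), we thus get $T_i=g\iota_i\cdot\triangle_0$ with vertices $(g\iota_i(\infty),\,g\iota_i(-1),\,g\iota_i(0))$.

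Next I would evaluate everything explicitly. From the matrices for $\iota_i$, a one‑line computation gives $\iota_1\cdot\triangle_0=(-1,\,\infty,\,-1-\tfrac1{k_1})$, $\iota_2\cdot\triangle_0=(-\tfrac1{k_2+1},\,0,\,-1)$, and $\iota_3\cdot\triangle_0=(0,\,k_3,\,\infty)$; applying $g$ then gives $T_1=(b,\,a,\,g(-1-\tfrac1{k_1}))$, $T_2=(g(-\tfrac1{k_2+1}),\,c,\,b)$, $T_3=(c,\,g(k_3),\,a)$, which already has the shape of \eqref{eqn:recursion}. It remains to compute the three third vertices. Since $g$ is the unique element of $\PSLtwoC$ (in fact of $\Gamma$) with $g(\infty)=a$, $g(-1)=b$, $g(0)=c$, solving these three conditions yields the closed form $g(z)=\dfrac{a(c-b)z+c(a-b)}{(c-b)z+(a-b)}$, valid for all $z\in\widehat{\CC}$ and correctly interpreted projectively when $a=\infty$ or there is a coincidence among $a,b,c$. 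Substituting $z=-1-\tfrac1{k_1}$, $z=-\tfrac1{k_2+1}$, $z=k_3$ and clearing denominators reproduces exactly the three expressions in \eqref{eqn:recursion}.

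There is no serious obstacle here; the whole content is the bookkeeping of which side of $\triangle_0$ bears which label $k_i$ (equivalently, which involution realises each adjacency) and then the Möbius substitution. The one place to be careful is the sign accounting in the last step — in the $T_1$ and $T_3$ computations the numerator and denominator produced by the substitution each carry an overall $-1$ relative to the form written in \eqref{eqn:recursion} — together with a quick check that the ordering of the vertices of $T_i$ forced by the reduced word $g\iota_i$ is the one used in the statement (the auxiliary remark that, for a word $g$ of length $n$, one of $T_1,T_2,T_3$ has level $n-1$ and the other two have level $n+1$ is not needed for the identity itself). As a consistency check, setting $k_1=k_2=k_3=1$ collapses all three formulas to the classical ``Farey addition'' rule generating the Farey tessellation, which is the case the theorem generalises.
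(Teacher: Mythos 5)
Your proposal is correct and follows essentially the same route as the paper: the paper identifies the adjacent tile via the conjugated half-turn $h_i=g\iota_i g^{-1}$ about the marked point on the shared side, so $T_i=g\iota_i\cdot\triangle_0$, then writes $g$ explicitly as the M\"obius map with $g(\infty)=a$, $g(-1)=b$, $g(0)=c$ (the same closed form you give) and evaluates, exactly as you do. Your version merely reorders the evaluation by computing $\iota_i\cdot\triangle_0$ first and then applying $g$, which is the same computation.
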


\medskip

\noindent {\em Remark:}
If one of $a,b$ or $c$ is $\infty$, (1) can be interpreted suitably by taking limits.
Note also that if $g \neq \rm{I}$, one of $T_1,T_2, T_3$ is at level $n-1$ and the other two are at level $n+1$, so $\TTT$ can be obtained recursively from $\triangle_0=(\infty, -1,0)$ and (1). The modular group case is very special as there is a three-fold symmetry since $k_1=k_2=k_3=1$. In general, one needs to keep track of the labelling of the sides and the parameters $k_1,k_2,k_3$ will  enter into the recursion formula. The recursion for the corresponding punctured torus group and tessellation $\QQQ$ can obtained from this by removing the translates of the geodesic $[\infty, 0]$ by $\Gamma$, or alternatively, by gluing each triangle $(a,b,c)$ with its neighbor $T_3$ to form a quadrilateral.

\medskip

We next explore the question of arithmeticity of the integral Weierstrass groups and jigsaw groups arising from integral jigsaw sets. Recall that the marked triangles $\triangle(1,1/n,n)$, $n \in \NN$ (see section 2.2 for the definition) are called integral jigsaw tiles and sets of integral jigsaw tiles which include the tile $\triangle(1,1,1)$, denoted by $\SSS(1,n_2,\ldots, n_k)$, where $1=n_1 <n_2\cdots<n_k$ are called integral jigsaw sets. We classify all arithmetic integral Weierstrass groups and show that for all but finitely many integral jigsaw sets with two tiles, the corresponding jigsaw groups are always non-arithmetic. In the remaining cases,  the arithmetic jigsaw groups that do arise are finite index subgroups of either star jigsaw groups or diamond jigsaw groups (see definition \ref{def:stardiamond}): 
	
\begin{thm} {\rm (Theorem \ref{thm:nonarith})}
	\begin{itemize}
	    \item [(i)]Suppose that $n \neq 3,5,9,25$. Then any $\SSS(1,n)$ jigsaw group is non-arithmetic. Conversely, if $n\in \{3,5,9,25\}$, then there exists arithmetic $\SSS(1,n)$ jigsaw groups and these arise only as finite index subgroups of  star jigsaw groups  when $n=3$ or $9$, and diamond jigsaw groups when  $n=5$ or $25$.
	    \item [(ii)] The Weierstrass group $\Gamma(1,1/n,n)$ for $n \in \NN$ is arithmetic if and only if $n=1, 2$ or $4$.
	\end{itemize}

\end{thm}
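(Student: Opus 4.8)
The plan rests on the standard arithmeticity criterion for non-cocompact Fuchsian lattices: if $\Gamma<\PSLtwoR$ is a lattice with a parabolic element, then $\Gamma$ is arithmetic if and only if it is commensurable with $\PSLtwoZ$, which happens precisely when the invariant trace field $k\Gamma$ equals $\QQ$ and $\tr\gamma\in\ZZ$ for every $\gamma\in\Gamma^{(2)}$ (Takeuchi). All the groups in question are non-cocompact lattices --- they have cusps, with cusp set $\QQ\cup\{\infty\}$ in the pseudomodular cases --- so arithmeticity is decided by a finite list of traces computed from the explicit matrix generators $\iota_1,\iota_2,\iota_3$ above and, for (i), from the half-turn generators of the jigsaw groups of \cite{LTV}.

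For (ii) this is nearly immediate. Setting $(k_1,k_2,k_3)=(1,1/n,n)$, a short matrix multiplication gives $\tr(\iota_1\iota_2)=-(n+2)/\sqrt n$, so $\tr((\iota_1\iota_2)^2)=\tr(\iota_1\iota_2)^2-2=(n+2)^2/n-2=n+2+4/n$. Since $(\iota_1\iota_2)^2\in\Gamma^{(2)}$, arithmeticity forces this to be a rational integer, i.e.\ $n\mid 4$; hence $\Gamma(1,1/n,n)$ is non-arithmetic for every $n\notin\{1,2,4\}$. For the converse I would treat $n=1,2,4$ directly: when $n=1$ or $4$ we have $\sqrt n\in\QQ$, so the trace field is already $\QQ$; for $n=2$ a grading argument does it --- $\iota_2$ and $\iota_3$ have all entries in $\sqrt2\,\QQ$ while $\iota_1$ has all entries in $\QQ$, so every word in the generators is homogeneous for the $\ZZ/2$-grading $\QQ(\sqrt2)=\QQ\oplus\sqrt2\,\QQ$ and hence has trace in $\QQ$ or $\sqrt2\,\QQ$, whence $\tr^2\in\QQ$ always and $k\Gamma=\QQ$. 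One then verifies that the $\Gamma^{(2)}$-traces are integral (a finite check on generators and their products, or, more cleanly, by conjugating the generators into a maximal order of $M_2(\QQ)$, recovering for $n=1$ the Weierstrass extension of the modular torus group, which is commensurable with $\PSLtwoZ$); Takeuchi's criterion then gives arithmeticity.

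For (i) the same philosophy must be run uniformly over the infinite family of jigsaws assembled from $\SSS(1,n)=\{\triangle(1,1,1),\triangle(1,1/n,n)\}$. First, using the description in \cite{LTV} of $\Gamma_J$ as generated by the half-turns about the marked points of $J$, I would isolate inside \emph{every} such jigsaw group a distinguished element --- coming from traversing a $\triangle(1,1/n,n)$ tile together with its neighbours --- whose squared trace is an explicit rational function of $n$, and show this value is not a rational integer (or else that $\sqrt n$ persists in $k\Gamma_J$, so $k\Gamma_J\neq\QQ$) for all $n\notin\{3,5,9,25\}$, ruling out arithmeticity there. For $n\in\{3,5,9,25\}$ I would exhibit the star ($n=3,9$) and diamond ($n=5,25$) jigsaws of Definition \ref{def:stardiamond} explicitly and compute that their invariant trace fields are $\QQ$ with integral $\Gamma^{(2)}$-traces --- here $\sqrt9,\sqrt{25}\in\QQ$, and for $n=3,5$ the a priori fields $\QQ(\sqrt3),\QQ(\sqrt5)$ collapse in the invariant trace field by a homogeneity argument as in the $n=2$ case --- so these jigsaw groups are arithmetic. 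Finally, that any arithmetic $\SSS(1,n)$ jigsaw group arises \emph{only} as a finite-index subgroup of a star or diamond jigsaw group would follow by combining the commensurability classification (all non-cocompact arithmetic Fuchsian groups are commensurable with $\PSLtwoZ$) with the rigidity of the jigsaw combinatorics of \cite{LTV}: any gluing pattern other than the star or diamond reintroduces the non-integral trace found above.

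The main obstacle is this uniformity in part (i). In (ii) the obstruction is a one-line divisibility extracted from a single $2\times 2$ product, and the converse is a finite check; but in (i) the set of $\SSS(1,n)$ jigsaws is infinite and combinatorially diverse, so the crux is to pin down one trace that is simultaneously present in \emph{every} such jigsaw group and non-integral (or field-enlarging) for all $n\notin\{3,5,9,25\}$. This demands understanding how the parameter $n$ propagates through arbitrary words in the jigsaw generators --- not merely through one tile --- and in particular establishing that $\sqrt n$ survives in the invariant trace field rather than squaring away, as it does for the individual pair-traces above. Separating the four exceptional values, confirming that the star and diamond jigsaws are genuinely arithmetic there, and upgrading abstract commensurability to the concrete ``finite-index in a star or diamond group'' statement is where the substantive work lies.
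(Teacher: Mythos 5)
Your part (ii) is essentially the paper's argument: the single trace obstruction $\tr((\iota_1\iota_2)^2)=(n+2)^2/n-2$ forces $n\mid 4$, and $n=1,2,4$ are checked directly, so that part is fine. The genuine gap is in part (i), and it is exactly the step you flag as ``the main obstacle''. There is no single distinguished element, present in every $\SSS(1,n)$ jigsaw group, whose squared trace is a rational function of $n$ alone that is non-integral precisely for $n\notin\{3,5,9,25\}$. The traces available in $\Gamma_J$ have the form $((m_k-m_l)^2+u+v)^2/(uv)$ with $u,v\in\{1,n\}$, where the $m_k$ are the real endpoints of the vertical edges of $\QQQ_J$; these depend on the combinatorics of the particular jigsaw (the $J$-widths of its vertices), not just on $n$. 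That the exceptional set is jigsaw-dependent is already visible in the statement: the diamond is arithmetic only for $n=5,25$ and the star only for $n=3,9$, so no function of $n$ alone can separate the cases. Note also that your fallback of showing ``$\sqrt n$ survives in $k\Gamma_J$'' cannot be the obstruction: by the homogeneity/grading you describe, every word in the half-turn generators has squared trace in $\QQ$, so $k\Gamma_J=\QQ$ automatically and the only issue is \emph{integrality}. The paper handles this by turning the integrality requirement into congruence conditions on all pairwise differences $m_k-m_l$ modulo prime powers dividing $n$ (e.g.\ $p\mid(m_k-m_l)$ for two type-$n$ sides, $(m_k-m_l)^2\equiv-1\pmod p$ for mixed type), and then playing these against the combinatorics of triangulated polygons (every jigsaw has at least two ears and two non-adjacent vertices of weight $2$ or $3$), with a case analysis over $n$ even, $n$ having a prime factor $p>5$ with $p\equiv 3$ or $1\pmod 4$, $15\mid n$, $n=3^k$, $n=5^k$, plus an induction that peels off ears. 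None of this uniform-in-$J$ analysis is supplied, or replaceable, by the single-trace plan you sketch.

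The second gap is the ``only as finite index subgroups of star/diamond jigsaw groups'' claim. Commensurability of all non-cocompact arithmetic Fuchsian groups with $\PSLtwoZ$ gives only that an arithmetic $\Gamma_J$ is commensurable with the star or diamond group, which is much weaker than the asserted subgroup containment, and ``rigidity of the jigsaw combinatorics'' is not an argument. The paper proves containment concretely: for $n=5,25$ (and analogously $3,9$) the admissibility congruences force any candidate jigsaw $J$ to decompose into diamond (resp.\ star) blocks, and an inductive re-assembly argument shows $J\subset\QQQ_D$ for the diamond (resp.\ star) tessellation, whence $\Gamma_J<\Gamma_D$ of finite index. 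You would need to supply this decomposition-and-embedding step; as written, your route does not yield it.
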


\noindent {\em Remark:} For (i),
\begin{itemize}
\item The cases $n=2,3$ were considered in \cite{LTV}.  When $n=3$, we were able to describe exactly which $\SSS(1,3)$ jigsaw groups were arithmetic. We show here that essentially the same result holds for $n=9$ and analogous results for $n=5,25$, replacing star jigsaws by diamond jigsaws.
\item The case $n=4$ was considered by Garlaz-Garcia recently in \cite{Gar} and she proved Theorems \ref{thm:nonarith} and \ref{thm:infinitepmg} for $\SSS(1,4)$. 
\end{itemize}

If we drop the condition that an integral jigsaw set contains the $\triangle^{(1)}$ tile, the question of arithmeticity  becomes more subtle and interesting. 
By checking the conditions for arithmeticity with a computer code, we show in Proposition \ref{prop:mn} a list of pairs $(m,n)$, $1\le m < n \le 10000$ for which the diamond $\SSS(m,n)$-jigsaw group is arithmetic. In fact, as an application, we can classify all arithmetic surfaces of signature $(0;2,2,2,2;1)$ which admit a reflection symmetry where the fixed point set of the reflection does not contain any of the order two cone points, and all pairs $(m,n)$ giving rise to arithmetic surfaces. Let $\mathcal{M}^{ref}{(0;2,2,2,2;1)}$ denote the moduli space of such hyperbolic surfaces. We have:

\begin{thm}\label{thm:mnintro}{\rm (Theorem \ref{thm:mn})}
 There are exactly eight  arithmetic surfaces in $\mathcal{M}^{ref}{(0;2,2,2,2;1)}$. They correspond to the diamond $\SSS(m,n)$ jigsaw groups for the pairs $$(1,1), (1,5), (2,2), (2,6), (4,4), (4,8), (8,12), (16,20).$$ On the other hand, there are infinitely many pairs $(m,n)\in \NN\times \NN$ for which the diamond $\SSS(m,n)$-jigsaw group is arithmetic. They arise from the action of the mapping class group on the eight basic pairs above.   
\end{thm}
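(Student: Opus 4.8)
The plan is to separate the two assertions, handling the moduli count via an arithmeticity analysis of the diamond jigsaw groups $\Gamma_{m,n}$ themselves, and the infinitude via an orbit count for the mapping class group. First I would make the bridge between the two parts of the statement explicit. A diamond $\SSS(m,n)$ jigsaw is the ideal octagon obtained by gluing two copies each of $\triangle(1,1/m,m)$ and $\triangle(1,1/n,n)$ in the diamond pattern of Definition~\ref{def:stardiamond}; $\Gamma_{m,n}$ is generated by the $\pi$-rotations about its marked points; the quotient orbifold has signature $(0;2,2,2,2;1)$; and the evident mirror symmetry of the diamond descends to a reflection of $\HH/\Gamma_{m,n}$ whose fixed geodesic runs from the cusp through the ``waist'' of the diamond, so it meets none of the four order-two cone points. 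Conversely, cutting any $S\in\mathcal{M}^{ref}{(0;2,2,2,2;1)}$ along its reflection locus and unfolding exhibits $S$ as a diamond jigsaw surface with a pair of real parameters $(x,y)\in\RR_{>0}^2$, up to the finite symmetry group generated by $(x,y)\mapsto(1/x,1/y)$ and $(x,y)\mapsto(y,x)$; the integral diamond jigsaws are exactly the points $(x,y)=(m,n)\in\NN^2$. So $\mathcal{M}^{ref}{(0;2,2,2,2;1)}$ is globally parametrized by $(x,y)$, and I would record explicit generators of $\Gamma_{x,y}<\PSLtwoR$ as matrices with entries in $\QQ(\sqrt{x},\sqrt{y})$, in the style of the jigsaw generators of \cite{LTV}.

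Next I would invoke the standard arithmeticity criteria for non-cocompact Fuchsian lattices: such a group is arithmetic if and only if it is commensurable with $\PSLtwoZ$, which by Takeuchi's criterion holds precisely when its invariant trace field is $\QQ$ and all its traces are algebraic integers. The key computation is therefore to evaluate $\tr\gamma$ for a generating set of words of $\Gamma_{m,n}$ as functions of $m$ and $n$; from these one reads off that the invariant trace field of $\Gamma_{m,n}$ is $\QQ(\sqrt D)$ for an explicit $D=D(m,n)$ (essentially a monomial in $m,n$, such as $mn$ or $m/n$, up to a square factor). The requirement $k\Gamma_{m,n}=\QQ$ then forces $D(m,n)$ to be a perfect square, cutting the two-parameter family down to a countable union of curves, and the integrality of traces imposes a further explicit divisibility or congruence constraint on $(m,n)$. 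Specializing to the sub-family $\SSS(1,n)$ recovers part (i) of Theorem~\ref{thm:nonarith} and confirms these constraints are non-vacuous.

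The heart of the argument is then the finiteness statement. The mapping class group of the orbifold $(0;2,2,2,2;1)$ acts on its Teichm\"uller space, hence on the parameter space of marked diamond jigsaws, and since a mapping class is merely a re-marking of a fixed surface this action preserves arithmeticity; I would identify the induced action on $(m,n)$ explicitly --- it is generated by Vieta-type involutions, i.e.\ a continued-fraction step that is essentially the pseudo-euclidean move studied elsewhere in this paper --- and verify that it preserves the integral locus $\NN^2$, hence permutes the arithmetic integral pairs. From here the proof is a descent: using the Diophantine constraints of the previous paragraph, I would show every arithmetic pair $(m,n)$ reduces under these moves into a bounded region, enumerate the finitely many ``reduced'' pairs, and check that exactly $(1,1),(1,5),(2,2),(2,6),(4,4),(4,8),(8,12),(16,20)$ occur, each being genuinely arithmetic (by exhibiting a finite-index embedding of the associated group, or a conjugate of it, into $\PSLtwoZ$). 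Finally I would confirm the eight resulting surfaces are pairwise non-isometric --- they have distinct cusp widths, covolumes and trace spectra --- so there are exactly eight arithmetic surfaces in $\mathcal{M}^{ref}{(0;2,2,2,2;1)}$, and that each of their (infinite) mapping-class-group orbits of markings contains infinitely many members of $\NN^2$, giving the infinitude of arithmetic integral pairs.

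The main obstacle is the descent-and-enumeration step: one must prove that the reduction under the mapping-class moves terminates and that the list of irreducible arithmetic pairs is exactly these eight, with nothing sporadic escaping. This is the most delicate point, because the arithmeticity constraints interact with the nonlinear parameter action in a way that does not obviously bound the reduced pairs a priori; presumably this is where the computer verification for $1\le m<n\le 10000$ from Proposition~\ref{prop:mn} enters, excluding small exceptional solutions while a structural argument handles the tail. A secondary subtlety is the passage between ``arithmetic surface in $\mathcal{M}^{ref}$'' and ``arithmetic integral diamond pair'': one needs an arithmetic $S\in\mathcal{M}^{ref}{(0;2,2,2,2;1)}$ to carry \emph{some} integral diamond structure, which follows because arithmeticity forces the trace data, hence the diamond parameters, to be rational up to a common scaling that can be absorbed, after which clearing denominators lands in $\NN^2$.
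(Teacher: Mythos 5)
Your overall route is the same as the paper's (unfold along the reflection axis to exhibit the surface as a diamond $\SSS(m,n)$ jigsaw, use trace conditions to force $(m,n)\in\NN^2$ and a Diophantine constraint, let the reflection-preserving mapping class act by a Vieta-type move, descend to a bounded set of pairs, consult Proposition \ref{prop:mn}, and get the infinitude from the orbits). However, there is a genuine gap exactly where you flag ``the main obstacle'': you give no argument that the descent terminates in a bounded region, and this is the heart of the paper's proof, not something the computer list can substitute for. The paper closes it as follows: arithmeticity forces $l:=(\tr(\iota_1\iota_3))^2=(m+n+4)^2/mn\in\NN$, and $l\ge 5$ because $\iota_1\iota_3$ is hyperbolic; the mapping-class move replaces $m$ by the second root $m'$ of $(4+x+n)^2=lxn$, so $m+m'=(l-2)n-8$ and $mm'=(n+4)^2$. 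If both $m,m'\ge n$ then $m,m'\le (n+4)^2/n\le n+8$, so $m+m'\le 2n+16$, while $m+m'=(l-2)n-8>3n-8$; these are incompatible once $n\ge 25$, so $\min(m,m')<n$ and the move strictly reduces until $n\le 25$ (hence $m\le 29$), after which the finite list (or a short direct analysis of minimal pairs, $m=n\in\{1,2,4\}$ or $(m,n)$ fixed up to swap giving $m\mid 8$) yields exactly the eight classes. Without an inequality of this kind your ``reduced pairs'' need not be bounded at all, and nothing in the trace-field discussion you sketch supplies it.

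There are also several smaller inaccuracies worth correcting. The diamond $\SSS(m,n)$ jigsaw is an ideal \emph{quadrilateral} made of one $\triangle^{(m)}$ and one $\triangle^{(n)}$ tile; two copies of each would give eight cone points, contradicting the signature $(0;2,2,2,2;1)$ you need. The integrality of $(m,n)$ is not obtained by ``absorbing a common scaling and clearing denominators'' --- the parameters are intrinsic and cannot be rescaled; the paper instead uses that arithmeticity makes the cusp set $\QQ\cup\{\infty\}$, so $m,n\in\QQ$, and then $(\tr(\iota_2\iota_1))^2=(m-2)^2\in\ZZ$ forces $m\in\NN$ (similarly $n$). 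The Vieta move does \emph{not} preserve $\NN^2$ globally (since $m'=(n+4)^2/m$ need not be integral); it preserves integrality only on arithmetic pairs, where $l\in\NN$, which is the direction actually needed. Finally, covolume cannot distinguish the eight surfaces (all have area $2\pi$) and cusp width alone fails for $(2,6)$ versus $(4,4)$; the paper distinguishes them by the invariant $l=(m+n+4)^2/mn$.
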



Next, we demonstrate the ubiquity of pseudomodular groups from the integral jigsaw construction (in contrast to the sparcity of arithmetic examples demonstrated by Theorem \ref{thm:nonarith}) by showing that the two jigsaw sets used in \cite{LTV} to construct pseudomodular groups were somewhat arbitrary, and the restriction unnecessary. In fact, starting with {\it any integral jigsaw set}, one can construct infinitely many non-commensurable pseudomodular groups:

\begin{thm}{\rm (Theorem \ref{thm:infinitepmg})}
	For any integral jigsaw set  $\SSS(1,n_2,\ldots, n_s)$, where $1<n_2<\cdots <n_s$,   there exists infinitely many commensurabilty classes of \\
	$\SSS(1,n_2,\ldots, n_s)$  pseudomodular jigsaw groups. 
	
\end{thm}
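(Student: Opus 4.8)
The plan is to produce, for each integral jigsaw set $\SSS(1,n_2,\ldots,n_s)$, an infinite family of jigsaws $J$ whose associated groups $\Gamma_J$ are pairwise non-commensurable and pseudomodular, generalizing the $\SSS(1,2)$ and $\SSS(1,3)$ constructions of \cite{LTV}. The key structural fact from \cite{LTV} that I would invoke is that a jigsaw group $\Gamma_J$ has cusp set exactly $\QQ\cup\{\infty\}$ precisely when (a) the jigsaw $J$ has all its ideal vertices at rational points (equivalently $\infty$), which is automatic for integral tiles after a suitable normalization since the tiles $\triangle(1,1/n,n)$ have rational marked points, and (b) an associated \emph{killer interval} / pseudo-Euclidean condition holds, ensuring every rational number is flowed to a cusp by the side-pairing $\pi$-rotations. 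So the task reduces to: (1) build infinitely many combinatorially distinct rational jigsaws from the given tile set, (2) verify pseudomodularity for each by exhibiting killer intervals, and (3) separate infinitely many of them into distinct commensurability classes.

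First I would set up the combinatorics: since the set contains $\triangle(1,1,1)$, one can pad any "core" configuration of the other tiles with arbitrarily many $\triangle(1,1,1)$ tiles along a boundary edge, or iterate a gluing pattern, to get jigsaws $J_m$ ($m\in\NN$) of unbounded size. The balancing condition from \cite{LTV} (needed for completeness) must be respected at every interior edge; I would choose the gluing pattern so that matching-side conditions and balancing are satisfied by construction — e.g., a linear "strip" of tiles where the parameters telescope. Normalizing so that $J_m$ sits inside $\HH$ with all ideal vertices in $\QQ\cup\{\infty\}$, condition (a) holds. For pseudomodularity I would adapt the killer-interval argument of \cite{LTV}: for each side-pairing generator (a $\pi$-rotation about a marked rational point $p/q$), its action on $\RR\cup\{\infty\}$ fixes $p/q$ and one checks that the union of the images of the complementary intervals, together with finitely many special rationals, covers everything, so the orbit of $\infty$ under $\Gamma_{J_m}$ is all of $\QQ\cup\{\infty\}$; because the tiles are integral, these are finite rational computations that can be made uniform in $m$. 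Non-arithmeticity follows from Theorem \ref{thm:nonarith} or its method: a pseudomodular group is automatically non-arithmetic (an arithmetic group with cusp at $\infty$ and a rational cusp generates many rational cusps, but a noncocompact arithmetic Fuchsian group commensurable with $\PSLtwoZ$ would have to \emph{be} commensurable with $\PSLtwoZ$, and one checks the jigsaw groups are not — e.g. by a trace-field or covolume obstruction), so pseudomodular already implies non-arithmetic, and in particular infinitely many of the $\Gamma_{J_m}$ lie outside the single arithmetic commensurability class.

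The main obstacle is step (3), distinguishing infinitely many commensurability classes, since commensurability is a subtle invariant. Here I would follow the strategy of \cite{LTV}: compute a commensurability invariant that varies with $m$ — the natural candidate is the set of cusp widths / the "cusp width spectrum" of $\Gamma_{J_m}$ at $\infty$ (or, after normalization, the invariant trace field together with the set of traces of primitive hyperbolic elements, or the area and the minimal cusp width). Concretely, as $m\to\infty$ the jigsaws grow, so the area $\mathrm{Area}(\HH/\Gamma_{J_m}) = (\#\text{tiles})\cdot\pi$ grows, but area is only a finite-index-scaling invariant, not a commensurability invariant; the real work is to show some \emph{ratio}-type or \emph{arithmetic} invariant (such as the smallest cusp width ratio, or the trace field, or a suitable $p$-adic/congruence obstruction on the traces) takes infinitely many values. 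I expect that, exactly as in \cite{LTV}, one picks a prime-indexed subfamily where a cusp-width computation yields denominators involving distinct primes, forcing non-commensurability; making this explicit and uniform over all integral jigsaw sets — rather than the two special sets of \cite{LTV} — is the technical heart of the argument, and I would structure the proof so that the general integral tile $\triangle(1,1/n,n)$ enters the cusp-width recursion in a controlled way, reducing the general case to the estimates already in place for $\SSS(1,2)$.
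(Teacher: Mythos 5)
Your proposal has the right overall skeleton (build infinitely many jigsaws from the given set, verify pseudomodularity via killer intervals, then separate commensurability classes), but it has genuine gaps at both of the two steps that actually carry the weight. First, pseudomodularity: it is not true that an arbitrary integral jigsaw padded with $\triangle^{(1)}$ tiles has killer intervals covering a fundamental interval --- beneath each $\triangle^{(n_k)}$ tile the tessellation has a gap of length $n_k$ between consecutive integer cusps whose killer radii are only $1$, so the covering fails outright for $n_k\ge 2$ unless you produce interior cusps with controlled contraction. The paper's key device, which your proposal does not supply, is the notion of a \emph{good} jigsaw: an $n_k$-Farey block is glued onto the type-$1$ side of every $\triangle^{(n_k)}$ tile, which forces the cusps $r+n_kp/q$ ($q\le n_k$) to appear with killer radius $1/q$, and the Farey condition $q+v>n_k$ for neighboring fractions is exactly what makes these intervals overlap and cover $[r,r+n_k]$. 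Saying the computations ``can be made uniform in $m$'' assumes precisely the structure that has to be built in. (Your treatment of non-arithmeticity is also loose --- Theorem \ref{thm:nonarith} only covers the sets $\SSS(1,n)$, and for $n\in\{3,5,9,25\}$ one must additionally argue that good jigsaw groups are not subgroups of the star or diamond groups --- but that part is repairable.)

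Second, and more seriously, step (3) is not an argument. The invariants you list cannot do the job: the invariant trace field of all these groups is $\QQ$ (they lie in $\PGLtwoQ$), and area, as you note, is not a commensurability invariant; the ``prime-indexed cusp-width'' strategy you attribute to \cite{LTV} is not what is done there. The actual mechanism, in both \cite{LTV} and this paper, is Margulis's theorem: since the groups are non-arithmetic, $\mathrm{Comm}(\Gamma_J)$ is itself a lattice and is the unique maximal element of the commensurability class, so it suffices to prove $\Gamma_J=\mathrm{Comm}(\Gamma_J)$. This is done by a concrete geometric computation: for a good jigsaw of signature $(M,1,\ldots,1)$ the maximal embedded horocycle lifts to the horizontal line at height $\sqrt{n_s}$ and has exactly two self-tangency points (the marked points on the two type-$n_s$ sides), splitting it into a long and a short arc; this rigidifies the group enough to force $\mathrm{Comm}(\Gamma_J)=\Gamma_J$ as in Section 8 of \cite{LTV}. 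Once each group equals its own commensurator, commensurable groups would be conjugate, hence have equal covolume, so varying the number $M$ of $\triangle^{(1)}$ tiles yields infinitely many classes. Without this (or an equally effective substitute), your proposal leaves the ``technical heart'' unproved, as you yourself acknowledge.
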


\medskip
{The Weierstrass groups with rational parameters and the pseudomodular groups share some interesting properties with the modular group, besides having rational cusps. In particular, for any $\alpha \in \RR$, we can associate a unique cutting sequence arising from the group, and also  generalized continued fractions and in some cases, pseudo-Euclidean algorithms. We describe some examples in  section \ref{s:PSAlgorithms}. These algorithms provide convergents $c_i \in \QQ$ to a given $\alpha \in \RR$ compatible with the group. In the case of the diamond $\SSS(1,2)$-pseudomodular group, the continued fraction also has the properties that the partial numerators $a_i \in \{- 1,- 2\}$, the partial denominators $b_i \in \ZZ \setminus \{0\}$, and the complete quotients of rational numbers have decreasing denominators.}

\medskip

Our original motivation for introducing the jigsaw construction was to answer the question of Long and Reid in \cite{LR}. Nonetheless, there are certain aspects of our construction and results which bear some resemblance to the Gromov Piatetski-Shapiro construction of non-arithmetic lattices for Lie groups,  the Grothendieck theory of Dessins d'Enfants and to the Penrose aperiodic tilings of the euclidean plane. It would be interesting to explore possible connections with these areas. It would also be interesting to  see if our constructions and methods can be applied to the study of representations of surface groups to more general Lie groups like $\PSLtwoC$ or $\hbox{PGL}(n,\RR)$, and to explore the dynamics associated to the pseudo-Euclidean algorithms and cutting sequences.

\medskip

The rest of this paper is organized as follows. In section \ref{s:puncturedtorus}, we describe the punctured torus groups, Weierstrass groups, the tessellation and triangulation of $\HH$ associated to these groups and prove Theorem \ref{thm:recursion}. In section \ref{s:jigsaws} we recall briefly the basic definitions and properties of the jigsaw construction from \cite{LTV}. In section \ref{s:nonarith} we prove Theorem  \ref{thm:nonarith} classifying all arithmetic lattices arising from $\SSS(1,n)$ jigsaws and integral Weierstrass groups $\Gamma(1,1/n,n)$, and also Theorem \ref{thm:mn} concerning the arithmetic lattices that arise from $\SSS(m,n)$ diamond jigsaws. In section \ref{s:PMG} we prove Theorem \ref{thm:infinitepmg} showing the existence of infinitely many non-commensurable pseudomodular groups for every integral jigsaw set. In section \ref{s:PSAlgorithms}, we describe  the algorithms for cutting sequences, pseudo-euclidean algorithms and continued fractions associated to some of the groups constructed in the earlier sections.  Finally, in the Appendices, we give some of the details of our proof of arithmeticity for the diamond (for $n=5,25$) and star (for $n=3,9$) jigsaw groups,  list the arithmetic groups arising from  diamond $\SSS(m,n)$, jigsaw groups, where $1 \le m\le n\le 10,000$, and also list some examples of the cutting sequences and continued fractions for various $\alpha \in \RR$ associated to various groups.

\medskip

 \noindent {\it Acknowledgements}. We are grateful to Chris Leininger, Alan Reid, Darren Long and especially Hugo Parlier and Ara Basmajian for their interest in the project, and helpful conversations. 
  Tan was partially supported by the National University of Singapore academic research grant R-146-000-289-114.

\section{Punctured torus groups, marked ideal triangles and Weierstrass groups}\label{s:puncturedtorus}
\subsection{Punctured torus groups and tessellations of $\HH$ by quadrilaterals}
Let $T$ be a hyperbolic torus with one cusp, which can be realized as $\HH/ \Gamma$ where $\Gamma =\langle A,B \rangle$ is a discrete subgroup of $\PSLtwoR$, 
 $A$ and $B$ are hyperbolic elements with intersecting axes and the commutator $[A,B]$ is parabolic. Following \cite{LR}, we may normalize so that a fundamental domain for $T$ is given by the quadrilateral $Q$ with vertex set $\{\infty, -1, 0, u^2\}$,
\begin{equation}\small{
    A=\frac{1}{\sqrt{-1+\tau-u^2}}\left(
\begin{array}{cc}
-1+\tau & u^2 \\
1 & 1 \\
\end{array}
\right),\quad B= \frac{1}{\sqrt{-1+\tau-u^2}}\left(
\begin{array}{cc}
u & u \\
\frac{1}{u} & \frac{(\tau-u^2)}{u} \\
\end{array}
\right),}
\end{equation}
\begin{equation}
  AB^{-1}A^{-1}B= \left(
\begin{array}{cc}
-1 & -2\tau \\
0 & -1 \\
\end{array}
\right),  
\end{equation} 
 and $A$ maps the side  $[-1,0]$ to $[\infty, u^2]$, $B$ maps $[\infty, -1]$ to $[u^2,0]$, and $AB^{-1}A^{-1}B$ fixes $\infty$, (figure 1A).

\begin{figure}
     \centering
     \begin{subfigure}[b]{0.48\textwidth}
         \centering
         \includegraphics[width=\textwidth]{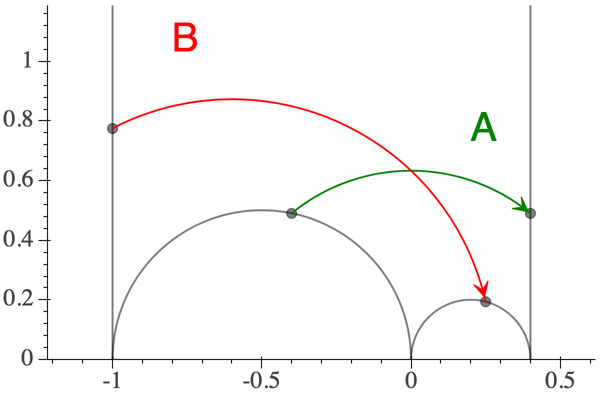}
         \caption{The fundamental domain $Q$}
         \label{fig:y equals x}
     \end{subfigure}
     \begin{subfigure}[b]{0.48\textwidth}
         \centering
         \includegraphics[width=\textwidth]{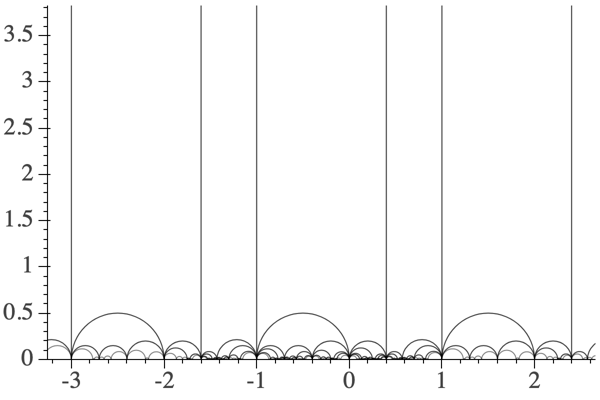}
         \caption{The tessellation $\QQQ_{(2/5,4)}$}
         \label{fig:three sin x}
     \end{subfigure}
        \caption{The fundamental domain $Q$ and tessellation $\QQQ_{(2/5,4)}$ corresponding to the pseudomodular group $\Delta(2/5, 4)$.}
        \label{fig:Qandtessellation}
\end{figure}

The group $\Gamma$ with parameters $u^2$ and $\tau$, $\tau>1+u^2$, is denoted by $\Delta(u^2, 2 \tau)$ in \cite{LR} and the translates of $Q$ by $\Delta(u^2, 2\tau)$ gives a tessellation $\QQQ_{(u^2,2\tau)}$ of $\HH$ by ideal quadrilaterals (figure 1B). An important special case is the group $\Delta(1,6)$ which is a finite index subgroup of the modular group and in this case $\QQQ_{(1,6)}$ is a subset of the Farey triangulation obtained by removing the translates of $[0,\infty]$ by $\Delta(1,6)$. In particular, in this case, one has a recursive way of describing $\QQQ_{(1,6)}$ by using the standard ``Farey addition'' to construct the Farey tessellation. Of interest too are the groups where $u^2\in \QQ$ and $\tau \in \NN$, (where $\Delta(u^2,2\tau) <\PGLtwoQ$) and especially those parameters which give rise to pseudomodular groups (for example, $\Delta(2/5, 4)$ used in figure 1) since in the latter case, the set of vertices of $\QQQ_{(u^2,2\tau)}$ is precisely $\QQ \cup \{\infty\}$. Long and Reid asked  \cite[Question 5, section 6, see also section 3.3]{LR}  if there was a recursive formula for $\QQQ_{(u^2, 2\tau)}$. We will answer this question by considering the supergroup of $\Delta(u^2, 2\tau)$  obtained by including the hyperelliptic involution which gives rise to what we call Weierstrass groups, which we describe next.

\subsection{Weierstrass groups and triangulations of $\HH$} Every cusped hyperbolic torus $T=\HH/\Gamma$ admits a hyperelliptic involution, the quotient surface $P$ has signature $(0;2,2,2;1)$. The corresponding supergroup of $\Gamma$ is a Coxeter group generated by the order two involutions ($\pi$-rotations) about the three Weierstrass points on the torus. Using the normalization for $T$ in the previous subsection, the fundamental domain for $P$ can be taken to be the ideal triangle $\triangle_0=(\infty, -1,0)$  and the Weierstrass points project to the marked points 
\begin{equation}\label{eqn:fixedpoints}
x_1=-1+\frac{i}{\sqrt{k_1}}, \quad x_2=\frac{-1+\sqrt{k_2}i}{1+k_2}, \quad x_3=\sqrt{k_3}i
\end{equation}  on the sides $[\infty, -1]$, $[-1,0]$ and $[0,\infty]$ of $\triangle_0$ respectively, where
\begin{equation}
u^2=k_3, \qquad \tau=1/k_1+1+k_3, \qquad k_1k_2 k_3=1,
\end{equation}
or equivalently,
\begin{equation}
k_1=\frac{1}{\tau-1-u^2}, \qquad k_2=\frac{\tau-1-u^2}{u^2}, \qquad k_3=u^2.
\end{equation}
Denote by $\triangle(k_1,k_2,k_3)$ the ideal triangle $\triangle_0$ with the marked points $x_1,x_2,x_3$ given by (\ref{eqn:fixedpoints}), up to $\hbox{Isom}^+(\HH)$, see figure \ref{fig:triangle}.
The $\pi$-rotations (involutions) $\iota_1$, $\iota_2$ and $\iota_3$ about the Weierstrass points $x_1,x_2$ are $x_3$ respectively are 
\begin{equation}\label{eqn:involutions}
\Small
\iota_1 = \frac{1}{\sqrt{k_1}}\left(
\begin{array}{cc}
k_1 & 1+k_1 \\
-k_1 & -k_1 \\
\end{array}
\right), \enskip
\iota_2 =
\frac{1}{\sqrt{k_2}} \left(
\begin{array}{cc}
1 & 1 \\
-(k_2+1) & -1 \\
\end{array}
\right), \enskip
\iota_3 =
\frac{1}{\sqrt{k_3}} \left(
\begin{array}{cc}
0 & k_3 \\
-1 & 0 \\
\end{array}
\right).\end{equation}

\medskip

\medskip

We call the Coxeter group $\Gamma(k_1,k_2,k_3):=\langle \iota_1, \iota_2, \iota_3 ~|~ \iota_1^2=\iota_2^2=\iota_3^2=\hbox{I} ~\rangle$ the Weierstrass group with parameters, $k_1,k_2,k_3 \in \RR_+$, where $k_1k_2k_3=1$. The translates of $\triangle_0$ by $\Gamma(k_1,k_2,k_3)$ gives a triangulation $\TTT_{(k_1,k_2,k_3)}$ of $\HH$ (Figure \ref{fig:triangulation}). The relation to the tessellation $\QQQ_{(u^2, 2\tau)}$ is that $\QQQ_{(u^2, 2\tau)}$ is obtained from  $\TTT_{(k_1,k_2,k_3)}$ by removing the translates of the geodesic $[0, \infty]$ by $\Gamma(k_1,k_2,k_3)$. Hence a recursive formula for the triangulation essentially gives a recursive formula for the tessellation. For example, the group $\Delta(2/5,4)$ used in Figure \ref{fig:Qandtessellation} is a subgroup of the group $\Gamma(5/3,3/2,2/5)$ used in Figure \ref{fig:Tandtriangulation}.

\begin{figure}
     \centering
     \begin{subfigure}[b]{0.48\textwidth}
         \centering
         \includegraphics[width=\textwidth]{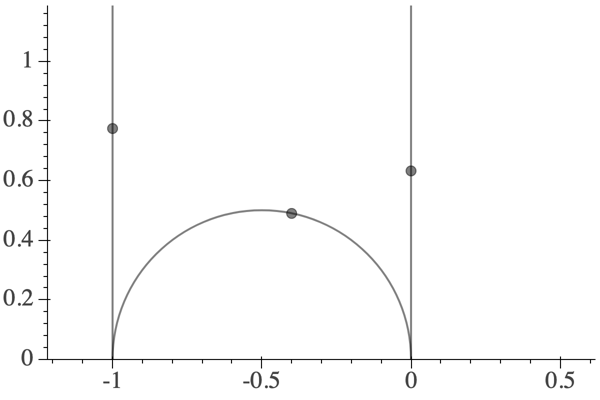}
         \caption{ $\triangle_0$ with marked points $x_1,x_2,x_3$.}
         \label{fig:triangle}
     \end{subfigure}
     \begin{subfigure}[b]{0.48\textwidth}
         \centering
         \includegraphics[width=\textwidth]{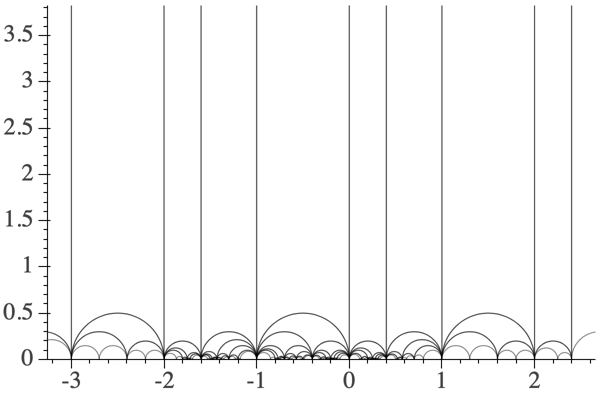}
         \caption{The triangulation $\TTT_{(5/3,3/2,2/5)}$}
         \label{fig:triangulation}
     \end{subfigure}
        \caption{The fundamental domain $\triangle_0$ and triangulation $\TTT_{(5/3,3/2,2/5)}$ corresponding to the Weierstrass group $\Gamma(\frac{5}{3},\frac{3}{2},\frac{2}{5})$.}
        \label{fig:Tandtriangulation}
\end{figure}

\medskip

If $g \in \Gamma{(k_1,k_2,k_3)}$ has reduced word length $n$ in the generators $\iota_1,\iota_2, \iota_3$, we say that the triangle 
$$(a,b,c):=(g(\infty), g(-1), g(0)),$$ 
which is the translate of $\triangle_0$ by $g$ is a {\em level $n$ triangle}. Apart from $\triangle_0$, every level $n$ triangle in $\TTT_{(k_1,k_2,k_3)}$ is adjacent to two triangles of level $n+1$ and one of level $n-1$ and there is a unique path from $\triangle_0$ to any such $(a,b,c)$ along adjacent triangles in $\TTT_{(k_1,k_2,k_3)}$ with the levels increasing by $1$ along each step. These notions can be seen more clearly by looking at the dual graph $G$ to the triangulation $\TTT$ which is an infinite trivalent tree where the vertices of the tree correspond to the triangles of $\TTT$, and edges have length 1. The level of a particular triangle is then just the distance of the corresponding vertex $v\in V(G)$ from the vertex $v_0\in V(G)$  corresponding to $\triangle_0$. We have:

\begin{thm}\label{thm:recursion}
Let $(a,b,c)$ be a level $n$ triangle in $\TTT_{(k_1,k_2,k_3)}$ and let $T_1$, $T_2$ and $T_3$ be the triangles adjacent to $(a,b,c)$, where $T_i$ shares the side marked by $k_i$ with $(a,b,c)$. Then
\begin{equation}\label{eqn:recursion}
\begin{split}
    T_{1} & = \left(b,~a,~~\frac{k_{1}b(c-a)+a(c-b)}{k_{1}(c-a)+(c-b)}\right)
     \\
     T_{2} &=\left(\frac{k_{2}c(a-b)+b(a-c)}{k_{2}(a-b)+(a-c)},~~c,~b\right)
    \\
    T_{3} &= \left(c,~~\frac{k_{3}a(b-c)+c(b-a)}{k_{3}(b-c)+(b-a)},~a\right).
    \end{split}
\end{equation}

\end{thm}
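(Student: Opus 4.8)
My plan is to reduce the statement to one elementary computation — the three triangles of $\TTT$ adjacent to $\triangle_0$ itself — and then transport the answer by $g$. The marked points $x_1,x_2,x_3$ lie on the sides $[\infty,-1]$, $[-1,0]$, $[0,\infty]$ of $\triangle_0$ respectively, and $\iota_i$ is the $\pi$-rotation about $x_i$; hence $\iota_i$ fixes $x_i$, interchanges the two endpoints of the side carrying $x_i$, and maps $\triangle_0$ to the triangle of $\TTT$ on the far side of that side. Reading the matrices in (\ref{eqn:involutions}) as M\"obius maps, I would compute the three triples $\iota_i\triangle_0=(\iota_i(\infty),\iota_i(-1),\iota_i(0))$, obtaining $\iota_1\triangle_0=(-1,\infty,-1-\tfrac1{k_1})$, $\iota_2\triangle_0=(-\tfrac1{k_2+1},0,-1)$, and $\iota_3\triangle_0=(0,k_3,\infty)$. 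Since $g$ carries the side $[\infty,-1]$ of $\triangle_0$ (the one bearing $x_1$, i.e. the label $k_1$) to the side $[a,b]$ of $g\triangle_0=(a,b,c)$, and likewise $[-1,0]\mapsto[b,c]$ and $[0,\infty]\mapsto[c,a]$, the neighbour $T_i$ of $(a,b,c)$ across the side labelled $k_i$ is precisely $g\,\iota_i\,\triangle_0$. Applying $g$ to the three triples gives $T_1=(b,a,g(-1-\tfrac1{k_1}))$, $T_2=(g(-\tfrac1{k_2+1}),c,b)$, $T_3=(c,g(k_3),a)$, which already matches the vertex pattern of (\ref{eqn:recursion}); only the one non-obvious entry in each $T_i$ remains to be pinned down.

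To evaluate $g(w)$ I would use that $g$ preserves cross-ratios together with $g(\infty)=a$, $g(-1)=b$, $g(0)=c$: from $(g(w),a;b,c)=(w,\infty;-1,0)=\frac{w+1}{w}$ one solves a linear equation for $g(w)$ and gets $g(w)=\dfrac{w\,b(a-c)-(w+1)\,c(a-b)}{w\,(a-c)-(w+1)\,(a-b)}$, with the evident limiting reading when one of $a,b,c$ equals $\infty$. Then I would substitute $w=-1-\tfrac1{k_1}$, $w=-\tfrac1{k_2+1}$, $w=k_3$ — for which $\frac{w+1}{w}$ simplifies to $\frac1{k_1+1}$, $-k_2$, $\frac{k_3+1}{k_3}$ respectively — and clear denominators; a short rational-function simplification turns these into exactly the three claimed expressions, completing the proof. (One checks in passing that $n=0$, i.e. $(a,b,c)=(\infty,-1,0)$, is subsumed under the limiting convention, since then the formulas return $\iota_1\triangle_0,\iota_2\triangle_0,\iota_3\triangle_0$.)

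I anticipate no analytic difficulty: the whole argument is combinatorial bookkeeping plus one algebraic simplification. The only place to be careful is matching ``the side marked by $k_i$'' of $g\triangle_0$ with $g$ applied to the \emph{correct} side of $\triangle_0$, and keeping track of the induced cyclic order of the vertices so that the entries of each $T_i$ land in the positions asserted in (\ref{eqn:recursion}). The assertions in the Remark — for $g\neq\mathrm{I}$, exactly one of $T_1,T_2,T_3$ is at level $n-1$ and the other two at level $n+1$, so $\TTT$ is generated recursively from $\triangle_0$ and (\ref{eqn:recursion}) — then follow from the description of $\TTT$ via its trivalent dual tree recalled just before the statement: crossing the side whose label is the last letter of a reduced word for $g$ replaces $g$ by $g\iota_i$ of length $n-1$, while for the other two indices $g\iota_i$ is again reduced, of length $n+1$.
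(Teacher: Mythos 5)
Your proposal is correct and follows essentially the same route as the paper: both identify the neighbour across the side labelled $k_i$ as $g\,\iota_i\,\triangle_0$ (the paper phrases this via $h_i=g\iota_i g^{-1}$ and $T_i=h_i(a,b,c)$) and then evaluate the resulting vertices. The only difference is cosmetic — you evaluate $g$ at the new vertex via cross-ratio invariance with $(\infty,-1,0)\mapsto(a,b,c)$, whereas the paper writes out the matrix of $g$ explicitly in terms of $a,b,c$ and multiplies; both yield the stated formulas, and your verification of the base triangles $\iota_i\triangle_0$ and the substitution checks out.
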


If the parameters are rational, and we write $k_1=p/q$, then we have $$T_1=\left(b,a,\frac{pb(c-a)+qa(c-b)}{p(c-a)+q(c-b)}\right)$$
with similar expressions for $T_2$ and $T_3$. When $k_1=k_2=k_3=1$, this reduces to the standard Farey addition.

Note that if one of $a,b$ or $c$ is $\infty$, the above is defined as for cross ratios by taking the relevant limits, for example, if $(a,b,c)=(\infty,b,c)$, then

$$T_{1}=\left(b,\, \infty,\, \frac{k_{1}b+b-c}{k_{1}}\right), \quad T_{2}=\left(\frac{b+k_{2}c}{1+k_{2}},c,b\right), \quad T_{3}=\left(c,c-{k_{3}(b-c)},\infty\right).$$

\begin{proof}
We first assume $a,b,c \neq \infty$ and let $y_1$ be the marked point on the side $[a,b]$ of the triangle $(a,b,c)$,  $h_1$ the $\pi$-rotation about $y_1$ and let $g \in \Gamma(k_1,k_2,k_3)$ such that $g(\infty, -1,0)$=$(a,b,c)$. Then $h_1=g\iota_1g^{-1}$ and $T_1=h_1(a,b,c)=g\iota_1(\infty, -1,0)$. By a direct calculation, $$g=\frac{1}{\sqrt{(a-b)(b-c)(c-a)}}\left(\begin{array}{cc}
a(c-b) & c(a-b)\\
c-b & a-b
\end{array}\right)$$ 
here $(a-b)(b-c)(c-a)$ must be positive as $g\in \PSLtwoR$. From this, we get $$h_1(a)=g\iota_1(\infty)=b, \qquad \quad h_1(b)=g\iota_1(-1)=a, ~~$$
$$h_1(c)=g\iota_1(0)=\frac{k_{1}b(c-a)+a(c-b)}{k_{1}(c-a)+(c-b)}.$$
The formulas for $T_2$ and $T_3$, and the cases where one of $a,b,c=\infty$ can be obtained similarly, details are left to the reader.

\end{proof}

\subsection{Examples} The case $\Gamma(1,1,1)$ gives rise to the Farey sequence $\{F_n\}$ which can be defined as follows, listing the vertices of all the triangles in $\TTT_{(1,1,1)}$ up to level $n-1$:
\begin{eqnarray*}
    F_1:=\{-\infty, -1,0, \infty\}, \quad F_2:\{-\infty, -2,-1, -1/2,0, 1, \infty\}, \\
    F_3=\{-\infty, -3,-2,-3/2,-1,-2/3, -1/2, -1/3,0, 1/2,1,2, \infty\},\ldots
\end{eqnarray*}
In general, $F_{n+1}$ can be obtained from $F_n$ by inserting new terms between successive terms of $F_n$ using the ``Farey addition'' of numerators and denominators. For the $n$th Farey sequence of $\Gamma(k_1,k_2,k_3)$ which we denote by $F_n(k_1,k_2,k_3)$ we use the recursion defined by (\ref{eqn:recursion}).

\medskip

\noindent { For $\Gamma(1,1/5,5)$}, we have:
$$F_1(1,1/5,5)=\{-\infty, -1,0, \infty\},\quad 
F_2(1,1/5,5)=\{-\infty, -2, -1, -5/6, 0, 5, \infty\},\ldots$$
$F_5(1,1/5,5)=\{-\infty,-9,-8,\frac{-47}{6},-7,\frac{-44}{7},\frac{-9}{2},\frac{-19}{7},-2,\frac{-19}{12},\frac{-9}{7},\frac{-44}{37},\frac{-7}{6},\frac{-47}{41},\frac{-8}{7},\\
\frac{-9}{8},-1,\frac{-12}{13},\frac{-7}{8},\frac{-37}{43},\frac{-6}{7},\frac{-41}{48},\frac{-35}{41},\frac{-40}{47},\frac{-5}{6},\frac{-35}{43},\frac{-30}{37},\frac{-35}{44},\frac{-5}{7},\frac{-10}{19},\frac{-5}{12},\frac{-5}{13},0,\frac{5}{8},\frac{5}{7},\\ \frac{10}{9},\frac{5}{2},\frac{35}{9},\frac{30}{7},\frac{35}{8},5,\frac{40}{7},\frac{35}{6},\frac{41}{7},6,\frac{37}{6},7,12, \infty\},\ldots $

As an example of getting from $F_4(1,1/5,5)$ to $F_5(1,1/5,5)$, consider the level $3$ triangle $(-\frac{6}{7}, -1, -\frac{7}{8})$. By (\ref{eqn:recursion}) the adjacent triangles are:
\begin{equation*}
\begin{split}
    T_{1} &= \left(~-1, ~~-\frac{6}{7},~~-\frac{5}{6}\right) \qquad \hbox{(Level $2$)}.
    \\
    T_{2} & = \left(~-\frac{12}{13},~~-\frac{7}{8},~~-1\right) \qquad \hbox{(Level $4$)}
     \\
    T_{3} &=\left(~-\frac{7}{8},~~-\frac{37}{43},~~-\frac{6}{7}\right) \qquad \hbox{(Level $4$)}
    \end{split}
\end{equation*}

\medskip
\noindent {For $\Gamma(\frac{5}{3},\frac{3}{2}, \frac{2}{5})$}, we have:
$$F_1(\frac{5}{3},\frac{3}{2}, \frac{2}{5})=\{-\infty, -1,0, \infty\}, \quad
F_2(\frac{5}{3},\frac{3}{2}, \frac{2}{5})=\{-\infty, \frac{-8}{5},-1,\frac{-2}{5},0,\frac{2}{5}, \infty\},\ldots$$ 
$F_5(\frac{5}{3},\frac{3}{2}, \frac{2}{5})=\{-\infty, \frac{-18}{5},-3,\frac{-12}{5},-2,\frac{-9}{5},\frac{-7}{4},\frac{-43}{25},\frac{-8}{5},\frac{-3}{2},\frac{-37}{25},\frac{-47}{32},\frac{-10}{7},\frac{-76}{55},\frac{-13}{10},\\
\frac{-6}{5}, -1,\frac{-4}{5},\frac{-10}{13},\frac{-64}{85},\frac{-7}{10},\frac{-11}{17},\frac{-4}{7},\frac{-1}{2},\frac{-2}{5},\frac{-4}{13},\frac{-1}{4},\frac{-2}{9},\frac{-1}{5},\frac{-8}{45},\frac{-1}{6},\frac{-2}{15},0,\frac{2}{15},\frac{1}{5}, \frac{8}{35},\frac{1}{4},\\
\frac{10}{37},\frac{7}{25},\frac{4}{13},\frac{2}{5},\frac{13}{25},\frac{4}{7},\frac{7}{10},1,\frac{8}{5},2,\frac{12}{5}, \infty\}$,...

\medskip

As in the previous example, consider the level $3$ triangle 
$(-\frac{7}{10}, -1, -\frac{10}{13})$.
By (\ref{eqn:recursion}) the adjacent triangles are:
\begin{equation*}
\begin{split}
    T_{1} &= \left(~-1,~~-\frac{7}{10},~~-\frac{2}{5}\right) \qquad \hbox{(Level $2$)}.
    \\
    T_{2} & = \left(~-\frac{4}{5},~~-\frac{10}{13},~~-1\right) \qquad \hbox{(Level $4$)}
     \\
    T_{3} &=\left(~-\frac{10}{13},~~-\frac{64}{85},~~-\frac{7}{10}\right) \qquad \hbox{(Level $4$)}
    \end{split}
\end{equation*}

Note that the vertex emerging at a higher level may have a smaller denominator than the corresponding vertex at the previous level, as for $T_2$ in this example where $-\frac{4}{5}$ emerges as an image of $-\frac{7}{10}$. Also, this group is pseudomodular, so all rationals are cusps, although a rational with relatively small denominator may be at a fairly high level. For example, $-\frac{1}{3}$ first appears as a vertex of the level 7 triangle $(-\frac{13}{40}, -\frac{1}{3}, -\frac{2}{5})$.



\section{Jigsaws and integral jigsaw groups}\label{s:jigsaws}
We recall briefly the definitions and objects associated to the jigsaw construction and some of the basic properties needed in later sections.  The reader is referred to sections 2 and 4 of \cite{LTV} for more details.

\subsection{Basic definitions}\label{ss:defn}
\begin{defn}(Integral jigsaw tiles,  jigsaw sets, jigsaws and jigsaw groups)
\begin{enumerate}
        \item The marked triangles $\triangle^{(n)}:=\triangle(1,1/n,n)$, $n\in \NN$ are called {\it integral jigsaw tiles}, and sets of integral tiles of the form   
$$\SSS(1, n_2, \ldots, n_s)=\{\triangle^{(1)}, \triangle^{(n_2)}, \ldots, \triangle^{(n_s)} \}, \quad 1=n_1<n_2< \cdots <n_s$$ 
are called {\it integral jigsaw sets}. 
\item An ideal polygon $J$ which is assembled from the tiles of $\SSS(1, n_2, \ldots, n_s)$ by gluing tiles  along sides with the same labels,  such that the marked points match up, is called an (integral) $\SSS(1,n_2, \ldots, n_s)$-jigsaw, provided that all tiles are used at least once. Its signature is defined to be $$sgn(J):=(m_1, \ldots, m_s)\in \NN^s,$$ where $m_k $ tiles of type $n_k$ are used to assemble $J$, $k=1, \ldots, s$. The {\it size} of $J$, denoted by $|J|$ is the total  number of tiles in $J$. By definition,  $\sum_{k=1}^s m_k=N:=|J|$ and $J$ is an ideal $(N+2)$-gon.

\item The jigsaw $J$ is in {\it normalized position} in $\HH$ if the triangle $\triangle=(\infty, -1,0)$ is a $\triangle^{(1)}$ piece of $J$. We always assume $J$ is in normalized position, unless otherwise stated. 

\item Denote the vertices of $J$ in counterclockwise direction by $v_0=-\infty< v_1< \cdots< v_{N+1}<v_{N+2}=v_0=\infty$ and let  $x_j \in s_j:= [v_j,v_{j+1}]$ be the marked points on the sides $s_j$ of $J$, where $j=0,1, \ldots, N+1$. The Coxeter group $\Gamma_J:=\langle \iota_0, \ldots, \iota_{N+1} \rangle$ where $\iota_j$ is the $\pi$-rotation about the marked point $x_j$ is the jigsaw group associated to the jigsaw $J$. We call $\Gamma_J$ an (integral) $\SSS(1,n_2, \ldots, n_k)$-jigsaw group. Note that $J$ is a fundamental domain for $\Gamma_J$. The surface $\HH/\Gamma_J$ has signature $(0; \underbrace{2, \ldots, 2}_{N+2} ;1)$.

\item  The product of the generators is parabolic, more precisely,	\[\imath_{N+1}\imath_N\cdots\imath_{0}=\pm \left(
			\begin{array}{cc}
			1 & L \\
			0 & 1 \\
			\end{array}
			\right) \] where \[L=\sum_{i=1}^{s}m_i(2+n_i)\in \NN.\] 
	 An interval of the form $(x,x+L]\subset \RR$ is called a {\it fundamental interval} for the group $\Gamma_J$.

    \end{enumerate}
\end{defn}

  \begin{defn} (Tessellations, triangulations, labels, types, vertex weights and vertex $J$-widths).
  \begin{enumerate}
      \item  Associated to $J$ and $\Gamma_J$ is a tessellation $\QQQ_J$ of $\HH$ by the translates under $\Gamma_J$ of $J$, and also a triangulation $\TTT_J$ of $\HH$ by the translates under $\Gamma_J$ of the  triangulation of $J$ by its tiles.
      \item The triangles of $\TTT_J$ are labelled by elements of the set $\{1,n_2, \ldots, n_s\}$ coming from the labelling of the tiles. The  sides of the tiles are labelled by elements of the set $\{1, 1/n_2,n_2, \ldots, 1/n_s, n_s\}$. This induces a labelling of the edges of $\TTT_J$ and $\QQQ_J$. An edge  of $\QQQ_J$ or $\TTT_J$ is said to be of {\it type} $n$ if its label is $n$ or $1/n$.
      \item The weight of a vertex $v$ of $J$, denoted by $\hbox{wt}(v)$ is the number of triangles at that vertex.
      \item The $J$ width of a vertex of a $\triangle ^{(n)}$ tile is $n$ if the vertex is between two type $n$ sides and $1$ otherwise. The $J$ width of a vertex $v_k$ of $J$, denoted $\hbox{JW}(v_k)$, is the sum of the $J$-widths of the vertices of the tiles at $v_k$.
      In particular, the length of the fundamental interval $L=\sum_{k=0}^{N+1}\hbox{JW}(v_k).$
  \end{enumerate}
     \end{defn}

  \begin{figure}
     \centering
     \begin{subfigure}[b]{0.48\textwidth}
         \centering
         \includegraphics[width=\textwidth]{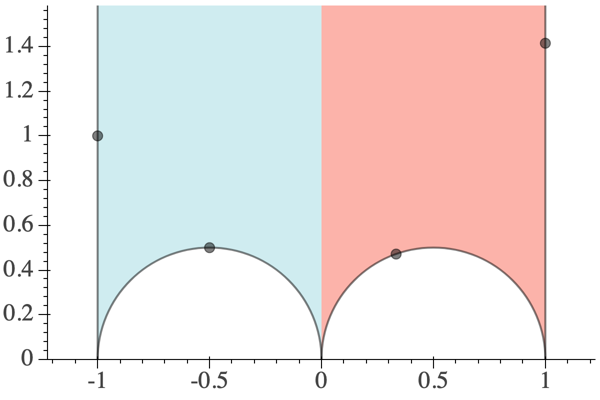}
         \caption{Left tile is $\triangle^{(1)}$, right tile is $\triangle^{(2)}$.}
         \label{fig:y equals x}
     \end{subfigure}
     \begin{subfigure}[b]{0.48\textwidth}
         \centering
         \includegraphics[width=\textwidth]{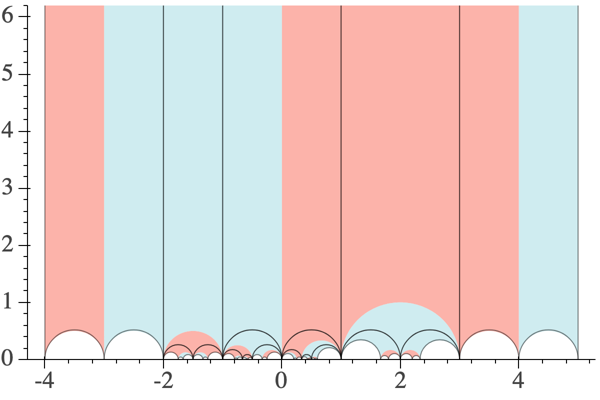}
         \caption{The tessellation $\QQQ_J$ (edges are black)}
         \label{fig:three sin x}
     \end{subfigure}
        \caption{The (diamond) $\SSS(1,2)$ jigsaw with two tiles, and associated tessellation}
        \label{fig:three graphs}
\end{figure}

  \subsection{Basic properties}\label{ss:basicproperties}
  The set of edges of $\QQQ_J$ is naturally a subset of the set of edges of $\TTT_J$ while they both share the same set of vertices, which is precisely the cusp set of $\Gamma_J$. For $j \in \ZZ$, let $e_j=[\infty, m_j]$ be successive (leftwards) vertical edges of $\QQQ_J$ where we fix $e_0$ to be the side $[\infty, v_1]$ of $J$, so $e_{-1}=[\infty, v_{N+1}]$, where $|J|=N$.
  The key facts (proven in \cite{LTV}) we will use about the triangulation $\TTT_J$,  tessellation $\QQQ_J$ and the vertical edges $\{e_j\}$ (see Figure 3 for an illustration), can be summarized as:

  \begin{lem}\label{lem:QTV} \cite[Proposition 4.3, 4.5, Lemma 7.1]{LTV} For an integral $\SSS(1,n_2, \ldots, n_s)$-jigsaw $J$ in normalized position, with $|J|=N$, the triangulation $\TTT_J$, tessellation $\QQQ_J$ and vertical edges $e_k=[\infty, m_k]$ of $\QQQ_J$ satisfy:
       
 \begin{enumerate}
      \item If $(\infty, x,y)$ is a triangle of $\TTT_J$, then $x$ and $y$ are integers that differ by either $1$ or $n_j$, the latter occurring only if $(\infty, x,y)$ is a $\triangle^{(n_j)}$-tile with label $1$ opposite the $\infty$ vertex. In this case, the triangles adjacent to $(\infty, x,x+n_j)$ are $(\infty, x-1,x)$ and $(\infty, x+n_j,x+n_j+1)$.
      \item The real end point $m_k$ of $e_k$ is an integer for all $k\in \ZZ$.
      \item If $e_k$ is of type $n_j$, then the marked point on $e_k$ is at $m_k+\sqrt{n_j}i$ and the $\pi$-rotation $h_k$ about the marked point is given by  $$h_{k}=\frac{1}{\sqrt{n_j}}\left(\begin{array}{cc}
m_{k} & -(m_{k}^{2}+n_j)\\
1 & -m_{k}
\end{array}\right).$$

      \item The difference $|m_k-m_{k+1}|=JW(v_{k+1})$ where the indices are taken $\mod N+2$.
      \item $m_k-m_l\equiv 0 \mod L$ if $k -l \equiv 0\mod N+2$, where  $L=m_{0}-m_{N+2}$  is the length of the fundamental interval for $\Gamma_J$. Here $e_k$ and $e_l$ are lifts of the same side of $J$.
      \item For each $n\in\{n_{2},\ldots,n_{s}\}$, at least two of the sides
$s_{k}$, $s_{l}$ of $J$ are type $n$. 
      
  \end{enumerate}
  
   \end{lem}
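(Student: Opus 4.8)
This lemma collects results from \cite{LTV} (Propositions 4.3, 4.5 and Lemma 7.1 there), so the plan is to indicate how each item is obtained, proved in the order (1)--(2), (3), (4), (5), (6), since each relies on the ones before it. The backbone is a short ``normal form'' analysis. Place a jigsaw tile $\triangle^{(m)}$ so that one ideal vertex is at $\infty$ and so that the marked point on each of its two vertical sides sits at height $\sqrt{t}$, where $t$ is the type of that side; then the shape of the tile is determined by which of its three labelled sides ($1$, $1/m$ or $m$) is opposite the $\infty$-vertex. A direct M\"obius computation of exactly the flavour used in the proof of Theorem~\ref{thm:recursion} shows: the two finite ideal vertices are $\{x,x+m\}$ for some $x$ when the $1$-labelled side is opposite $\infty$ (and then both vertical sides have type $m$), and are $\{x,x+1\}$ in the two remaining cases; moreover in every case the marked point on a vertical side of type $t$ really does land at height $\sqrt{t}$, so the $\pi$-rotation about it is the $2\times2$ matrix written in (3).

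With this list of normal forms I would prove (1), (2) and (3) simultaneously by induction on the fan of triangles of $\TTT_J$ around the cusp $\infty$. The base case is the tile $(\infty,-1,0)$, a $\triangle^{(1)}$-tile already in normal form with integer vertices. For the inductive step, let $(\infty,x,y)$ be a triangle of $\TTT_J$ in normal form with $x,y\in\ZZ$; the triangle of $\TTT_J$ across one of its vertical sides shares that edge carrying a matching label (both the internal gluings of $J$ and the generators $\iota_j$ preserve labels), hence matching type $t$ and the same marked point, so by the normal-form list this neighbour is again a tile in normal form, and its third vertex differs from a vertex of $(\infty,x,y)$ by $1$ or by some $n_j$ and is therefore again an integer. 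Since the fan around $\infty$ is connected this establishes (1) and (2) (the vertical edges $e_k$ of $\QQQ_J$ being among these fan edges), and it delivers (3) as well. The one point in (1) needing care is which triangles flank a type-$n_j$ tile $(\infty,x,x+n_j)$: this comes from the explicit computation that the $\pi$-rotation about the marked point of a type-$n_j$ vertical edge $[\infty,x]$ sends the ``gap $n_j$'' vertex to the ``gap $1$'' vertex (for instance it sends $x+n_j$ to $x-1$), forcing the flanking triangles to be $(\infty,x-1,x)$ and $(\infty,x+n_j,x+n_j+1)$.

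For (4), the interval $[m_k,m_{k+1}]\subset\RR$ is the real footprint of the copy $g\cdot J$ lying near $\infty$ between the vertical edges $e_k$ and $e_{k+1}$, where $g^{-1}(\infty)=v_{k+1}$ (indices mod $N+2$); its tiles at $\infty$ are the $g$-images of the tiles of $J$ at the vertex $v_{k+1}$, and by the normal-form computation of Paragraph~1 each such tile occupies a footprint of width equal to its $J$-width at $v_{k+1}$ (width $n_j$ precisely when $\infty$ lies between its two type-$n_j$ sides, i.e.\ when its $1$-labelled side is opposite $\infty$). Summing over the tiles at $v_{k+1}$ gives $|m_k-m_{k+1}|=\JW(v_{k+1})$. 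Then (5) is immediate: the stabiliser of $\infty$ in $\Gamma_J$ is generated by the parabolic $P=\iota_{N+1}\cdots\iota_0$, which is translation by $L$, and by (4) the successive gaps $|m_k-m_{k+1}|$ run cyclically through $\JW(v_0),\dots,\JW(v_{N+1})$, which sum to $L$, so one period consists of exactly $N+2$ vertical edges; these satisfy $e_{k+N+2}=P^{\pm1}(e_k)$, whence $m_k-m_\ell\equiv 0\pmod{L}$ whenever $k\equiv\ell\pmod{N+2}$, and in that case $e_k$ and $e_\ell$ are $\Gamma_J$-translates of the same side of $J$, as asserted.

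Statement (6) is purely combinatorial: the dual graph of the triangulation of $J$ into its tiles is a tree (an ideal $(N+2)$-gon cut into $N$ triangles). A tile $\triangle^{(n_j)}$ has exactly two type-$n_j$ sides, all internal edges of $J$ join tiles along sides of matching type, and only type-$n_j$ tiles carry type-$n_j$ sides, so the $I_j$ internal type-$n_j$ edges all lie in the subgraph of the dual tree spanned by the $m_j$ tiles of type $n_j$; that subgraph is a forest, hence $I_j\le m_j-1$, leaving at least $2m_j-2I_j\ge 2$ of the $2m_j$ type-$n_j$ sides on $\partial J$. The step I expect to be the main obstacle is the bookkeeping behind Paragraphs~1--2: one must keep straight the several normal forms (and their mirror images) of a tile at $\infty$, and must distinguish carefully between edges of $\TTT_J$ that are $\Gamma_J$-translates of sides of $J$ (and so lie in $\QQQ_J$) and those internal to a copy of $J$ (which do not). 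Once that dictionary is written out cleanly, the inductions and the remaining counting are routine.
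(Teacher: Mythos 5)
The paper itself contains no proof of Lemma \ref{lem:QTV}: it is quoted wholesale from \cite[Propositions 4.3, 4.5 and Lemma 7.1]{LTV} ("the key facts, proven in \cite{LTV}"), so there is no in-paper argument to measure your write-up against. Judged on its own terms, your sketch is essentially correct and follows the natural route, consistent with the computations this paper does carry out elsewhere (the matrix in (3) is just the $\pi$-rotation about $m_k+\sqrt{n_j}\,i$, as used in (\ref{eqn:tracesquare})): the three orientation-compatible placements of a tile $\triangle^{(m)}$ with a vertex at $\infty$ (label $1$, $1/m$ or $m$ opposite $\infty$) have widths $1$, $1$, $m$ once the marked point on a vertical side of type $t$ is pinned at height $\sqrt{t}$; since labels agree across every edge of $\TTT_J$ (internal edges by the gluing rule, $\QQQ_J$-edges because the conjugated involutions fix the edge and its marked point), this normal form propagates through the connected fan at $\infty$ and yields (1)--(3); (4)--(5) follow by identifying the wedge between consecutive vertical $\QQQ_J$-edges with a single copy of $J$ and summing tile widths; and your dual-tree forest count for (6) is a correct, self-contained combinatorial argument. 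Two bookkeeping steps that you assert rather than derive should be written out. First, the identification $g^{-1}(\infty)=v_{k+1}$ for the copy between $e_k$ and $e_{k+1}$, and the fact that $e_k$ is a $\Gamma_J$-translate of $s_{k\bmod (N+2)}$ (needed for the last sentence of (5)), come from the short induction walking leftwards through the copies $\iota_0(J),\ \iota_0\iota_1(J),\ldots$; it is easy but not automatic. Second, in the flanking statement of (1) your appeal to the $\pi$-rotation implicitly assumes that the triangle across a vertical side of $(\infty,x,x+n_j)$ is its rotation image; this is true here precisely because the shared label is $n_j$ or $1/n_j$ rather than $1$, which forces the neighbour to be a $\triangle^{(n_j)}$ tile in the unique complementary placement (equivalently, your normal-form list gives the neighbour $(\infty,x-1,x)$, resp.\ $(\infty,x+n_j,x+n_j+1)$, directly), whereas across a label-$1$ edge the two sides may carry tiles of different types and the rotation argument would fail. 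Neither point is a substantive gap.
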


 The above suffices for our purposes, for more properties, see Propositions 4.3, 4.5 and 4.6 of \cite{LTV}. We will see in the next section that arithmeticity of $\Gamma_J$ forces certain congruences on the endpoints $m_k$ of the vertical edges of $\QQQ_J$ which are often not satisfied.

\section{Arithmeticity of integral Jigsaw and Weierstrass groups}\label{s:nonarith} We classify all arithmetic $\SSS(1,n)$-jigsaw groups and integral Weierstrass groups, and also all arithmetic surfaces of signature $(0;2,2,2,2;1)$ admitting a reflection symmetry.

\subsection{Statement of results for arithmeticity and strategy of proof}\label{ss:resultsarith}
\begin{defn}\label{def:stardiamond}(Diamond and star jigsaws).
    Fix the jigsaw set $\SSS(1,n)$. The {\em diamond jigsaw} is the jigsaw consisting of one $\triangle^{(1)}$ tile attached to a $\triangle^{(n)}$ tile, and the {\em star jigsaw} is the jigsaw consisting of one $\triangle^{(1)}$ tile in the middle, and three $\triangle^{(n)}$ tiles attached to its three sides, see figure 4. The corresponding jigsaw groups $\Gamma_J$ are called {\em diamond} and {\em star} jigsaw groups respectively. The diamond jigsaw for the set $\SSS(m,n)$ is defined similarly.
\end{defn} 


\begin{figure}
     \centering
     \begin{subfigure}[b]{0.48\textwidth}
         \centering
         \includegraphics[width=\textwidth]{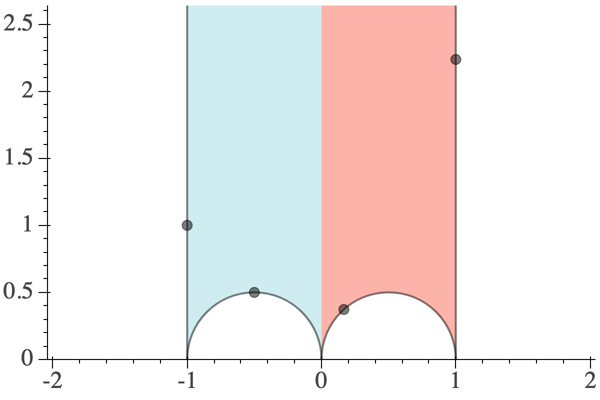}
         \caption{Diamond jigsaw, left tile is $\triangle^{(1)}$ tile}
         \label{fig:y equals x}
     \end{subfigure}
     \begin{subfigure}[b]{0.48\textwidth}
         \centering
         \includegraphics[width=\textwidth]{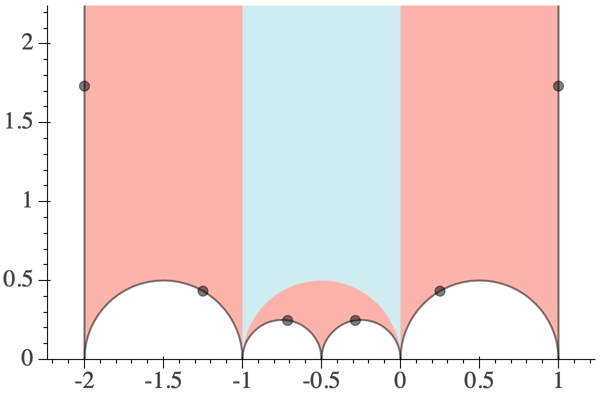}
         \caption{Star  jigsaw, center tile is $\triangle^{(1)}$ tile.}
         \label{fig:three sin x}
     \end{subfigure}
        \caption{The diamond $\SSS(1,5)$ jigsaw and the star $\SSS(1,3)$ jigsaw. }
        \label{fig:three graphs}
\end{figure}

\begin{thm}\label{thm:nonarith}~~~
	\begin{itemize}
	    \item [(i)] Suppose that $n \notin \{3,5,9,25\}$. Then any $\SSS(1,n)$ jigsaw group is non-arithmetic. Conversely, if $n\in \{3,5,9,25\}$, then there exists arithmetic $\SSS(1,n)$ jigsaw groups and these arise only as finite index subgroups of  star jigsaw groups  when $n=3$ or $9$, and diamond jigsaw groups when  $n=5$ or $25$.
	    \item [(ii)] The Weierstrass group $\Gamma(1,1/n,n)$ for $n \in \NN$ is arithmetic if and only if $n=1, 2$ or $4$.
	\end{itemize}
\end{thm}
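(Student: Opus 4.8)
The plan is to treat (i) and (ii) separately, using the same underlying arithmetic criterion in both cases: a non-cocompact lattice $\Gamma < \PSLtwoR$ generated by the data above is arithmetic if and only if, after conjugating into $\PGLtwoQ$, all traces of elements of $\Gamma$ are algebraic integers (equivalently, the invariant trace field is $\QQ$ and the trace ring is contained in the ring of integers — here just $\ZZ$). Concretely, for the jigsaw and Weierstrass groups every generator is of the form $\tfrac{1}{\sqrt{k}}M$ with $M\in\mathrm{GL}(2,\QQ)$, so products of even numbers of generators land in $\PGLtwoQ$; the group is arithmetic precisely when every such product has integer trace. For the Weierstrass group $\Gamma(1,1/n,n)$ in (ii), I would first write out $\iota_1,\iota_2,\iota_3$ with $k_1=1$, $k_2=1/n$, $k_3=n$ from (\ref{eqn:involutions}), form the three pairwise products $\iota_i\iota_j$ (these generate an index-two subgroup of $\Gamma$, the punctured-torus group $\Delta$), and compute a small generating set of traces as Laurent polynomials in $\sqrt{n}$; clearing the $\sqrt{n}$ denominators, integrality forces $n \mid (\text{small integer})$, and a short case check on the remaining finitely many $n$ (and a direct verification that $n=1,2,4$ do give arithmetic groups, e.g. by exhibiting them as commensurable with $\PSLtwoZ$ or a known quaternion order) closes (ii).

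For (i) I would use the structure provided by Lemma \ref{lem:QTV}. The key mechanism, already flagged in the text ("arithmeticity of $\Gamma_J$ forces certain congruences on the endpoints $m_k$"), is this: if $e_k=[\infty,m_k]$ is a vertical edge of type $n_j=n$, then by Lemma \ref{lem:QTV}(3) the $\pi$-rotation about its marked point is $h_k=\tfrac{1}{\sqrt n}\begin{pmatrix} m_k & -(m_k^2+n)\\ 1 & -m_k\end{pmatrix}$, and for two such edges $e_k,e_l$ one forms $h_k h_l \in \PGLtwoQ$ (the $\tfrac1n$ coming out front), whose trace is $\tfrac{1}{n}\bigl(2m_km_l - (m_k^2+n) - (m_l^2+n)\bigr) = -\tfrac{(m_k-m_l)^2}{n} - 2$. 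So arithmeticity demands $n \mid (m_k-m_l)^2$ for every pair of type-$n$ vertical edges, and by Lemma \ref{lem:QTV}(6) there are always at least two such edges within one fundamental interval, so these $m_k$ are genuine obstructions, not vacuous. Combining with Lemma \ref{lem:QTV}(1),(4),(5) — the $m_k$ are integers, consecutive differences are the $J$-widths, which are $1$'s and $n$'s summing to the period $L$ — one gets strong divisibility constraints; iterating the recursion in Theorem \ref{thm:recursion} (or equivalently pushing $h_k$ around the tessellation) produces more type-$n$ edges whose endpoints are constrained, and one shows that unless $n\in\{3,5,9,25\}$ some pair $(m_k,m_l)$ necessarily violates $n\mid (m_k-m_l)^2$. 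The "conversely" direction — that for $n\in\{3,5,9,25\}$ arithmetic examples do occur, and only as finite-index subgroups of the star (for $3,9$) or diamond (for $5,25$) jigsaw groups — I would handle by the explicit computations deferred to the Appendices: exhibit the relevant trace field and quaternion algebra for the star/diamond groups, and rule out all larger jigsaws in $\SSS(1,n)$ by the same congruence analysis (a larger jigsaw introduces extra type-$n$ or type-$1$ edges forcing a new endpoint difference that fails the divisibility, or forces the invariant trace field to strictly contain $\QQ$).

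The main obstacle I anticipate is the bookkeeping in the forward direction of (i): one must show the divisibility constraint $n \mid (m_k - m_l)^2$ fails for \emph{some} accessible pair of type-$n$ edges in \emph{every} $\SSS(1,n)$ jigsaw when $n \notin \{3,5,9,25\}$, uniformly over the unbounded family of possible jigsaws and their signatures. The clean way to do this is to reduce mod $n$: the sequence of endpoints $m_k$ mod $n$ evolves by adding $0$ or $1$ (since widths are $1$ or $n\equiv 0$) as one moves along the vertical edges, and each time one crosses a type-$n$ tile the relative positions of the two bounding type-$n$ edges are forced by Lemma \ref{lem:QTV}(1) to differ by $n\equiv 0$, which pins down a residue; one then extracts, from the existence of at least two type-$n$ sides and the balancing/periodicity, a pair with difference having a controlled nonzero residue mod $n$ whose square is nonzero mod $n$ unless $n$ is one of $3,5,9,25$ (these being exactly the $n$ for which $x^2\equiv 0$ has a large enough solution set — $n$ prime or a prime power $p^2$ with $p\le 5$, combined with the small number of available edges). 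Making that last numerical dichotomy airtight — identifying precisely why $3,5,9,25$ and not, say, $7$ or $8$ — is the crux, and I expect it to come down to: $n$ must divide a perfect square of a number strictly smaller in absolute value than $n$ (coming from an endpoint difference bounded by the local width pattern), which forces $n=p$ or $n=p^2$ with very small $p$, and then a finite check eliminates all but $\{3,5,9,25\}$ (plus $n=1,2,4$ already covered, which are too small to give a genuine $\SSS(1,n)$ jigsaw with $n>1$, or are handled as the base cases).
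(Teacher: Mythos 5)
Your outline for (ii) is fine in spirit (the paper gets $n\mid 4$ from the mixed-type trace condition applied to the single tile, then checks $n=1,2,4$ directly), but the core of (i) as you propose it has a genuine gap: you only invoke the congruence coming from two type-$n$ edges, $n\mid (m_k-m_l)^2$, and this alone is far too weak to prove non-arithmeticity. The engine of the actual argument is the \emph{mixed-type} condition from equation (\ref{eqn:tracesquare}): if $e_k$ is type $1$ and $e_l$ is type $n$, then $(\tr(h_kh_l))^2=\frac{1}{n}((m_k-m_l)^2+1+n)^2\in\ZZ$ forces $n\mid((m_k-m_l)^2+1)^2$, i.e.\ $-1$ must be a square modulo every prime factor $p$ of $n$ (Lemma \ref{lem:congruences}(3)). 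That is what eliminates primes $p\equiv 3\pmod 4$ (forcing all sides to be type $n$, then the divisibility of all $J$-widths by $p$ contradicts the ear/weight Lemma \ref{lem:polygon}), and for $p\equiv 1\pmod 4$, $p\ge 13$, the explicit residues $a,\,p-a$ of $\sqrt{-1}$ give lower bounds on vertex weights that an ear-removal induction contradicts; separate parity and $3^k,5^k,15\mid n$ cases finish the list. Your proposed ``crux'' --- that $n$ must divide a perfect square of something smaller than $n$, forcing $n=p$ or $p^2$ with small $p$ --- is not the operative dichotomy and in fact points the wrong way: $n\mid m^2$ with $0<m<n$ is \emph{impossible} for $n$ prime, so it would exclude $3$ and $5$ rather than single them out, and it gives no reason why $7$ or $13$ fail while $5$ survives. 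Also, your mod-$n$ bookkeeping (``$m_k$ evolves by adding $0$ or $1$'') is not right: by Lemma \ref{lem:QTV}(4) the steps are vertex $J$-widths, which are arbitrary sums of $1$'s and $n$'s, not single increments.

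The converse half of (i) is also not handled correctly by your plan. You propose to ``rule out all larger jigsaws'' by the same congruences, but larger arithmetic $\SSS(1,n)$ jigsaws do exist for $n\in\{3,5,9,25\}$ (e.g.\ unions of diamonds); the theorem asserts they are finite-index subgroups of the star or diamond groups, not that they do not occur. What is needed (and what the paper does in Proposition \ref{prop:3,5,9,25}) is a structural argument: the congruences force every admissible jigsaw to decompose into diamond blocks (resp.\ star blocks), and then a reverse induction shows the jigsaw embeds in the tessellation $\QQQ_D$ of the diamond (resp.\ star) jigsaw, whence $\Gamma_J<\Gamma_D$; the arithmeticity of the star/diamond groups themselves is a finite trace check (Takeuchi's criterion reduced to finitely many words via \cite{HLM}, \cite{Hor}), which matches your appendix-style verification. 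Without the mixed-type congruence and without this decomposition-and-embedding step, your proposal cannot reach either direction of (i).
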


We divide the statement and proof of (i) into several different propositions, adopting the notation of the previous section. It will be convenient to work with  $H_J <\Gamma_J$, the index two subgroup consisting of words of even length. If $\iota_0, \ldots, \iota_{N+1}$ are the generators of $\Gamma_J$ where $\iota_k$ are $\pi$-rotations about the marked points $x_k$ on the sides $s_k$ of $J$, listed in cyclic order, then $H_J$ is the free group
$$H_J=\langle \gamma_1, \dots, \gamma_{N+1}\rangle \quad \hbox{where} \quad \gamma_k=\iota_0\iota_k,~ k=1, \ldots, N+1. $$ 
To check for arithmeticity of $H_J$, by \cite{Tak}, it suffices to check that $\hbox{tr}\, \gamma^2 \in \ZZ$ (equivalently, $(\tr \gamma)^2 \in \ZZ$), for all $\gamma \in H_J$. In fact, by \cite{HLM} and \cite{Hor}, it suffices to check that this holds for a set of $2^{N+1}-1$ words in $\{\gamma_k\}$, and that $\tr \gamma \in \ZZ$ for the same set of words in $\{\gamma_k^2\}$, $k=1, \ldots, N+1$. The strategy will be  to show that in almost all cases, arithmeticity would imply certain congruences which cannot be satisfied, and to  construct arithmetic examples for the remaining cases. 

\subsection{Admissible jigsaws and triangulated polygons}\label{ss:admissibility}
The following definition will be useful:

\begin{defn} (Admissible jigsaws)
    Let $J$ be a $\SSS(1, n_2, \ldots, n_s)$-jigsaw in normalized position and let $e_k=[\infty, m_k]$, $k \in \ZZ$ be the vertical sides of $\QQQ_J$, $h_k$ the $\pi$-rotation about the marked point on $e_k$, as used in Lemma \ref{lem:QTV}(3). $J$ is {\em admissible} if $(\tr (h_kh_l))^2 \in \ZZ$ for all $k,l \in \ZZ$.
\end{defn}

By the criteria for arithmeticity, if $H_J$ is arithmetic, then $J$ is admissible since $h_kh_l \in H_J$ for all $k,l \in \ZZ$. On the other hand, the condition forces certain congruences which in general will not be satisfied.

Explicitly,  let $e_k=[\infty, m_k]$ and $e_l=[\infty, m_l]$ be two vertical sides of $\QQQ_J$ of type $u$ and $v$ respectively. Using the formula of the $\pi$-rotations $h_{k}$ and $h_l$ in Lemma \ref{lem:QTV}(3), we get 
\begin{equation}\label{eqn:tracesquare}
    (\tr (h_{k}h_{l}))^2 = \dfrac{(-(m_{k}-m_{l})^2-u-v)^2}{uv}=\dfrac{((m_{k}-m_{l})^2+u+v)^2}{uv}.
\end{equation}

In particular, applying (\ref{eqn:tracesquare}) to $\SSS(1,n)$-jigsaws, we have:

\begin{lem}\label{lem:congruences} If $J$ is a $\SSS(1,n)$ jigsaw and $e_k=[\infty, m_k]$, $k \in \ZZ$, the vertical sides of $\QQQ_J$, then:
\begin{enumerate}
    \item { If both $e_{k}$ and $e_{l}$ are type $1$, then} $$\left[tr\left(h_{k}h_{l}\right)\right]^{2}=\left(\left(m_{k}-m_{l}\right)^{2}+2\right)^{2}$$
which is always an integer.

\item { If both $e_{k}$ and $e_{l}$ are type $n$, then} $$\left[tr\left(h_{k}h_{l}\right)\right]^{2}=\frac{1}{n^{2}}\left(\left(m_{k}-m_{l}\right)^{2}+2n\right)^{2}\in\mathbb{Z}$$
if and only if $n|(m_{k}-m_{l})^{2}$.

\item 
{ If  $e_{k}$ and $e_{l}$ are different types, then} 
$$\left[tr\left(h_{k}h_{l}\right)\right]^{2}=\frac{1}{n}\left(\left(m_{k}-m_{l}\right)^{2}+1+n\right)^{2}\in\mathbb{Z}$$
if and only if  $n|((m_{k}-m_{l})^{2}+1)^{2}$.

\end{enumerate}

\end{lem}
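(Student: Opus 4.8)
The plan is to obtain all three statements by directly specializing the trace identity (\ref{eqn:tracesquare}) to the $\SSS(1,n)$ setting; no new geometric input is needed beyond what Lemma \ref{lem:QTV} already supplies. First I would record that, by Lemma \ref{lem:QTV}(2), each real endpoint $m_k$ is an integer, so that $d:=m_k-m_l$ is an integer throughout. Since an $\SSS(1,n)$-jigsaw uses only the labels $1$ and $n$, every vertical side $e_k$ of $\QQQ_J$ has type $u\in\{1,n\}$, which leaves exactly the three cases $(u,v)=(1,1)$, $(u,v)=(n,n)$, and $\{u,v\}=\{1,n\}$ (the formula being symmetric in $u$ and $v$). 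Feeding these into (\ref{eqn:tracesquare}) produces $(\tr(h_kh_l))^2$ equal to $(d^2+2)^2$, $(d^2+2n)^2/n^2$, and $(d^2+1+n)^2/n$ respectively, i.e.\ exactly the three displayed expressions.

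It then remains to decide when each of these lies in $\ZZ$. The first is clear. For the second and third I would use the elementary arithmetic fact that for an integer $x$ and a positive integer $n$ one has $n^2\mid x^2$ if and only if $n\mid x$ (compare, for each prime $p$ dividing $n$, the $p$-adic valuations of the two sides). Applying this with $x=d^2+2n$ shows that $(d^2+2n)^2/n^2\in\ZZ$ iff $n\mid d^2+2n$, which is iff $n\mid d^2$; this is case (2). For case (3), $(d^2+1+n)^2/n\in\ZZ$ iff $n\mid(d^2+1+n)^2$, and since $d^2+1+n\equiv d^2+1\pmod n$ this holds iff $n\mid(d^2+1)^2$.

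I do not expect any genuine difficulty: the whole argument is a short case analysis over the two admissible edge types, together with the observation $n^2\mid x^2\iff n\mid x$. The only subtlety worth flagging is to reduce the base modulo $n$ before squaring in the mixed-type case, and to notice that $n\mid d^2+2n$ is \emph{equivalent} to $n\mid d^2$ rather than merely following from it, so that the congruence conditions stated in (2) and (3) are sharp and not just sufficient.
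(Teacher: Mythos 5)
Your proposal is correct and follows essentially the same route as the paper, which obtains the lemma simply by specializing the trace formula (\ref{eqn:tracesquare}) to the edge types $u,v\in\{1,n\}$ supplied by Lemma \ref{lem:QTV}. The only thing you add is to make explicit the elementary divisibility equivalences (in particular $n^2\mid x^2\iff n\mid x$ and reduction of $d^2+1+n$ modulo $n$), which the paper leaves implicit.
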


We will also need some basic facts about triangulated polygons $P$, which will apply to the jigsaws $J$. 

\begin{defn}
    We call a triangle $\triangle$ a {\em ear} of $P$ if two of the sides of $\triangle$ are sides of $P$ and the vertex between these two sides the {\it tip} of the ear. 
\end{defn}

\begin{lem}\label{lem:polygon}\cite[Lemma 7.4]{LTV} Suppose $P$ is a triangulated polygon with at least 4 vertices.
\begin{itemize}
    \item[(a)] $P$ has at least two ears, hence  at least two vertices $v_i$, $v_j$ with $\hbox{wt}(v_i)=\hbox{wt}(v_j)=1.$
    \item[(b)]   There are at least two non-adjacent vertices $v_j$, $v_k$ of $P$ with $\hbox{wt}(v_j)$, $\hbox{wt}(v_k) \in \{2,3\}$. 
\end{itemize}
    
    \end{lem}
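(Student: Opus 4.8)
The plan is to argue by induction on the number $N$ of triangles in the triangulation of $P$, using the standard "ear decomposition" of a triangulated polygon and keeping careful track of weights. For part (a), the base case $N=2$ (a quadrilateral cut into two triangles by a diagonal) has exactly two ears, whose tips have weight $1$. For the inductive step with $N\geq 3$, I would consider the dual tree $G$ of the triangulation: vertices are triangles, edges join triangles sharing a diagonal of $P$. This tree has $N$ vertices, and its leaves correspond precisely to ears of $P$. Since a tree on $\geq 2$ vertices has at least two leaves, $P$ has at least two ears; the tip of each ear is a vertex of $P$ lying in only that one triangle, hence has weight $1$. (One must check that distinct ears give distinct tips: two ears sharing a tip $v$ would force $\mathrm{wt}(v)=1$ to account for both triangles, impossible since each ear is a full triangle — so the two triangles coincide, meaning $N=1$, excluded.) This gives (a) directly from the leaf count, with no induction strictly needed, though phrasing it via the dual tree is cleanest.

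For part (b), the idea is again to use the dual tree $G$, which is a tree on $N\geq 2$ vertices, together with the observation that a vertex $v$ of $P$ with $\mathrm{wt}(v)=2$ or $3$ corresponds to a "corner" that is either a degree-$2$ vertex of $G$ lying on exactly two triangles, or a small fan of three triangles. The cleanest route is: take a longest path (a diameter) in $G$, say from leaf $t_1$ to leaf $t_2$. The ear $t_1$ has a tip $v_1$ of weight $1$, but consider instead the \emph{other two} vertices of the ear triangle $t_1$: walking along $P$'s boundary away from $v_1$, the next vertex $w_1$ is shared by $t_1$ and exactly one further triangle (the neighbor of $t_1$ along the diameter) or more; if it is shared by exactly one further triangle then $\mathrm{wt}(w_1)=2$, otherwise continue. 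I would make this precise by an induction on $N$: remove an ear $t_1$ from $P$ to get $P'$ with $N-1$ triangles; by induction $P'$ has two non-adjacent vertices of weight in $\{2,3\}$; then analyze how reattaching the ear changes weights — it raises the weight of exactly one vertex (the vertex of $P'$ that becomes interior to the merged region is not a vertex of $P'$; rather the ear is glued along one edge, raising by $1$ the weight of the two endpoints of that edge). One then checks that at least one of the two weight-$\{2,3\}$ vertices of $P'$ survives with weight still in $\{2,3\}$, or is replaced by a nearby vertex, and that non-adjacency can be maintained, handling separately the small cases $N=2,3,4$ by hand.

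The main obstacle will be part (b): controlling \emph{non-adjacency} of the two low-weight vertices under the induction, since reattaching an ear can bump a weight-$2$ vertex to weight $3$ (still fine) or weight $3$ to weight $4$ (no longer fine), and can also change which vertices are adjacent. The safest fix is to prove a slightly stronger statement by induction — e.g. that $P$ has two non-adjacent vertices of weight in $\{2,3\}$ that are moreover tips of ears of $P$ or adjacent to tips of ears — so that the inductive hypothesis gives enough room to absorb the $+1$ from reattaching. Alternatively, one can bypass induction entirely: since $P$ has at least two ears whose tips have weight $1$, and since $\sum_v \mathrm{wt}(v) = 3N$ while the polygon has $N+2$ vertices, an averaging/counting argument combined with the fact that each ear "uses up" a weight-$1$ vertex and forces its neighbors to have somewhat small weight can be pushed through; I expect the dual-tree diameter argument to be the least painful, as the two endpoints of a diameter are leaves whose unique neighbors in $G$ have degree $2$ or sit in a controlled position, translating to weight exactly $2$ or $3$ for the appropriate boundary vertex of $P$, and the two such vertices are automatically non-adjacent when $N$ is large enough, with $N\leq 4$ checked directly.
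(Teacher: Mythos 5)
Your part (a) is fine: leaves of the dual tree are ears (for $N\ge 2$ triangles a leaf has exactly two sides on the boundary of $P$), a tree on at least two vertices has at least two leaves, and distinct ears have distinct tips, each of weight $1$. Note that the paper itself offers no proof to compare with — the lemma is quoted from \cite[Lemma 7.4]{LTV} — so the only issue is whether your own argument closes.

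For part (b) it does not: what you give is a plan whose decisive step, non-adjacency, is left open, and you say so yourself. The ear-removal induction stalls where you indicate: re-attaching an ear raises by one the weights of both endpoints of the glued side (possibly pushing $3$ to $4$), turns that side from a boundary edge of $P'$ into a diagonal of $P$ (so the adjacency relation itself changes), and the strengthened inductive hypothesis that would absorb this is never formulated. In the diameter argument, the assertion that the two produced vertices are ``automatically non-adjacent when $N$ is large enough, with $N\le4$ checked directly'' is exactly what needs proof, and no argument is given; the construction really can output adjacent vertices (in the pentagon it yields the two endpoints of the boundary side of the middle triangle, and one must fall back on the fan apex of weight $3$), so some case analysis is unavoidable. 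The diameter route can in fact be completed: if $t$ is a leaf ending a longest path and $t'$ its neighbour, maximality forces every further neighbour of $t'$ to be a leaf, and a short fan count shows that the vertex of $t$ lying on the boundary side of $t'$ (when $\deg t'=2$), respectively shared with the extra leaf ear (when $\deg t'=3$), has weight exactly $2$, respectively $3$; doing this at both ends, one checks that the two vertices are distinct and can be joined by a boundary edge only when the dual tree is a path of length two, i.e.\ the pentagon, which is handled by hand (two vertices joined by a diagonal are non-adjacent in the intended sense, as the quadrilateral case already shows). That verification is the actual content of (b), and since it is absent from your write-up, the proposal as it stands has a genuine gap.
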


The following is an easy consequence of Lemmas \ref{lem:congruences} and \ref{lem:polygon}:
\begin{cor}\label{cor:wt}
If $J$ is a $\SSS(1,n)$ jigsaw, then 
\begin{itemize}
    \item it has two non-adjacent vertices $v_i,v_j$ such that $\hbox{JW}(v_i), \hbox{JW}(v_j) \in \{2,3,n+1,n+2\}$.
    \item the tip of a ear of $J$ cannot be between a type 1 side and a type $n$ side, if $J$ is admissible and $n \notin \{2,4\}$.
\end{itemize}

\end{cor}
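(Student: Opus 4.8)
The plan is to combine the two cited lemmas: Lemma~\ref{lem:polygon} gives us ears and low-weight vertices, while Lemma~\ref{lem:congruences} tells us which pairs of vertical edges can give integral $(\tr)^2$, and then to translate ``weight'' into ``$J$-width'' using the structure of integral tiles. For the first bullet, I would start from Lemma~\ref{lem:polygon}(b): since $J$ is a triangulated ideal polygon with $N+2 \ge 4$ vertices (recall all tiles must be used, so $|J|=N\ge 2$), there are two non-adjacent vertices $v_i, v_j$ of $J$ with $\hbox{wt}(v_i), \hbox{wt}(v_j) \in \{2,3\}$. Now I need to check, tile by tile, what $\hbox{JW}(v)$ can be when $\hbox{wt}(v)\in\{2,3\}$. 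A vertex of a single $\triangle^{(1)}$ tile always contributes $J$-width $1$; a vertex of a single $\triangle^{(n)}$ tile contributes $n$ if it sits between the two type-$n$ sides of that tile, and $1$ otherwise. So if $\hbox{wt}(v)=2$, the two tiles at $v$ contribute $J$-widths summing to one of $1+1=2$, $1+n=n+1$, or (if both are $\triangle^{(n)}$ tiles with $v$ at the ``wide'' vertex) $n+n=2n$ — but in a $\SSS(1,n)$ jigsaw with only the two labels, if $v$ is at the wide vertex of a $\triangle^{(n)}$ tile, both of its incident sides at $v$ are type $n$, so the adjacent tile across one of those sides must also be a $\triangle^{(n)}$ tile, hence $\hbox{wt}(v)=2$ does force $2n$ only in the degenerate case $N=2$, which I will handle separately (there the diamond jigsaw has $\hbox{JW}$ values $1,1,n,1$, and one checks directly). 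Similarly $\hbox{wt}(v)=3$ gives $J$-widths in $\{3, n+2, 2n+1, 3n\}$, but the geometry of how a $\triangle^{(n)}$'s wide vertex forces its neighbours again rules out $2n+1$ and $3n$ except in small cases. After disposing of the tiny cases by hand, the surviving possibilities for $\hbox{JW}(v_i)$ and $\hbox{JW}(v_j)$ are exactly $\{2,3,n+1,n+2\}$, giving the first bullet.

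For the second bullet, suppose $J$ is admissible and let $\triangle$ be an ear of $J$ whose tip $v$ sits between a type-$1$ side $s_k$ and a type-$n$ side $s_l$ — these are two \emph{adjacent} sides of $J$ (they meet at $v$), so in the tessellation $\QQQ_J$ they lift to consecutive vertical edges $e_k=[\infty,m_k]$ and $e_{k+1}=[\infty,m_{k+1}]$ up to relabelling, with $|m_k-m_{k+1}|=\hbox{JW}(v)$ by Lemma~\ref{lem:QTV}(4). Since $\triangle$ is an ear, both $s_k$ and $s_l$ belong to the single tile $\triangle$; because $s_k$ is type $1$ and $s_l$ is type $n$, the tile $\triangle$ must be a $\triangle^{(n)}$ tile (a $\triangle^{(1)}$ tile has all sides type $1$), and $v$, lying between a type-$1$ and a type-$n$ side, is \emph{not} the wide vertex, so its $J$-width is $1$; hence $\hbox{JW}(v)=1$, i.e. $|m_k-m_{k+1}|=1$. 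But $e_k$ is type $1$ and $e_{k+1}$ is type $n$, so by Lemma~\ref{lem:congruences}(3), admissibility forces $n \mid ((m_k-m_{k+1})^2+1)^2 = (1+1)^2 = 4$. Thus $n \mid 4$, i.e. $n \in \{1,2,4\}$; since $n>1$ for a genuine jigsaw tile distinct from $\triangle^{(1)}$, this gives $n\in\{2,4\}$, contradicting the hypothesis $n\notin\{2,4\}$. Therefore no ear of $J$ can have its tip between a type-$1$ side and a type-$n$ side.

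\textbf{Main obstacle.} The only real subtlety is the bookkeeping in the first bullet: carefully enumerating which combinations of tile-corner $J$-widths can occur at a vertex of combinatorial weight $2$ or $3$ in a $\SSS(1,n)$ jigsaw, and checking that the ``spurious'' values like $2n$ or $2n+1$ (which would arise from a vertex being the wide vertex of a $\triangle^{(n)}$ tile and forcing its neighbour tiles) do not actually occur except in a handful of small jigsaws, which must be inspected individually. Everything else is a direct substitution into the congruence formulas of Lemma~\ref{lem:congruences}, so the proof is short once this combinatorial case analysis is pinned down.
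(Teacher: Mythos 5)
Your argument for the second bullet is correct and is exactly the intended one (the paper states the corollary without a written proof, as an easy consequence of Lemmas \ref{lem:congruences} and \ref{lem:polygon}): the tip $v$ of such an ear has weight $1$ and is not the wide vertex of its $\triangle^{(n)}$ tile, so $\hbox{JW}(v)=1$; by Lemma \ref{lem:QTV}(4) the two corresponding consecutive vertical edges of $\QQQ_J$ have endpoints differing by $1$ and are of different types, and Lemma \ref{lem:congruences}(3) then forces $n\mid(1+1)^2=4$, i.e. $n\in\{2,4\}$ (this is precisely how the corollary is invoked in the proof of Theorem \ref{thm:nonarith}(ii)).

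For the first bullet, however, the step you defer as the ``main obstacle'' is the whole content, and your partial handling of it is wrong. The values $2n$, $2n+1$, $3n$ are impossible at a weight $2$ or $3$ vertex of \emph{every} $\SSS(1,n)$ jigsaw, not ``only in small cases'': your claim that $\hbox{wt}(v)=2$ forces $2n$ only when $N=2$ does not follow from what you wrote, and your check of the diamond is incorrect (its $J$-widths are $1,2,n,2$, summing to $L=n+5$, not $1,1,n,1$). The fact you actually need, and never establish, is that two tiles cannot both be wide at a common vertex when the vertex has weight at most $3$. This comes from the gluing rule: sides are glued only along equal labels with matching marked points, and the gluing map along a type-$n$ side is the $\pi$-rotation about its marked point (e.g.\ $\iota_3$ along $[0,\infty]$ with marked point $\sqrt{n}i$), which sends the wide vertex $0$ to the opposite endpoint $\infty$; hence two $\triangle^{(n)}$ tiles sharing a type-$n$ edge have their wide vertices at \emph{opposite} endpoints of that edge, so consecutive tiles in the fan at $v$ are never both wide at $v$. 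If two tiles at $v$ were wide with exactly one tile between them, that middle tile would have both its sides at $v$ of type $n$, would therefore (again by same-label gluing) be a $\triangle^{(n)}$ tile wide at $v$, contradicting the previous sentence. Since $\hbox{wt}(v)\le 3$ by Lemma \ref{lem:polygon}(b), at most one tile at $v$ contributes $n$, so $\hbox{JW}(v)\in\{\hbox{wt}(v),\hbox{wt}(v)+n-1\}\subseteq\{2,3,n+1,n+2\}$, uniformly in $|J|$, with no exceptional cases to inspect. Without this opposite-endpoint argument (or an appeal to the corresponding structural statement from \cite{LTV}, cf.\ Lemma \ref{lem:QTV}(1)), your proof of the first bullet is incomplete.
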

\subsection{Subcases of Theorem \ref{thm:nonarith}}\label{ss:subcases}
We  first consider $\SSS(1,n)$ with $n$  even.  We have:

\begin{prop}\label{prop:even}
    If $n$ is even, then all $\SSS(1,n)$ jigsaw groups are non-arithmetic.
\end{prop}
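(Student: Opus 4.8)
The plan is to derive a contradiction from admissibility of $J$ when $n$ is even but $n \notin \{2,4\}$, using the congruence restrictions in Lemma \ref{lem:congruences} together with the combinatorial structure forced by Lemma \ref{lem:polygon} and Corollary \ref{cor:wt}. First I would observe that since $J$ uses the tile $\triangle^{(n)}$ at least once, by Lemma \ref{lem:QTV}(6) at least two sides of $J$ are of type $n$; let $e_k, e_l$ be two such vertical edges of $\QQQ_J$. Admissibility and Lemma \ref{lem:congruences}(2) then force $n \mid (m_k - m_l)^2$. Similarly, pairing a type-$n$ edge with a type-$1$ edge $e_j$ forces $n \mid ((m_k - m_j)^2 + 1)^2$ via Lemma \ref{lem:congruences}(3). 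The heart of the argument is to show these divisibility conditions are mutually incompatible once $n$ has an odd prime factor or a factor of $8$ — i.e. once $n \notin \{2,4\}$.

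The key number-theoretic step I would isolate is: if $n \mid (m_k-m_l)^2$ for all pairs of type-$n$ edges and $n \mid ((m_k-m_j)^2+1)^2$ for type-$n$/type-$1$ pairs, then writing $n = 2^a \cdot r$ with $r$ odd, the second condition forces $r \mid (m_k - m_j)^2 + 1$, so $-1$ is a quadratic residue mod every odd prime power dividing $n$; in particular $r$ cannot be divisible by any prime $\equiv 3 \pmod 4$, and one also gets constraints at the prime $2$. The more delicate part is ruling out $n$ divisible by $8$ or by an odd prime $p \equiv 1 \pmod 4$: here I would use Lemma \ref{lem:polygon}/Corollary \ref{cor:wt} to locate an ear of $J$ whose tip has small $J$-width, and Lemma \ref{lem:QTV}(4) which says consecutive $m_k$ differ by $\hbox{JW}(v_{k+1})$. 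Walking along the boundary of $J$ and tracking the $m_k$ modulo (a suitable power of $2$ and modulo $p$), I expect to show that the type-$1$ and type-$n$ edges cannot all sit in residue classes satisfying both congruences simultaneously — the $\pm 1$ shifts coming from the type-$1$ sides and the $\pm n$-type shifts propagate in a way that breaks the ``$-1$ is a square'' requirement. Corollary \ref{cor:wt}(ii) (the tip of an ear cannot separate a type-$1$ from a type-$n$ side when $n \notin \{2,4\}$) should be the crucial structural input that pins down where the type changes can occur along the boundary.

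Concretely, the steps in order: (1) reduce arithmeticity to admissibility, so assume $J$ admissible; (2) record the congruences from Lemma \ref{lem:congruences}(2),(3) for all relevant pairs of vertical edges; (3) use $\hbox{JW}$ and Lemma \ref{lem:QTV}(4) to express $m_k - m_l$ as a sum of $J$-widths along the boundary arc between $e_k$ and $e_l$, where each summand is $1$ (from a width-1 vertex of a $\triangle^{(1)}$ tile or a non-straddled vertex of a $\triangle^{(n)}$ tile) or $n$ (from a vertex straddled by two type-$n$ sides); (4) analyze the resulting congruences modulo the odd part $r$ of $n$ and modulo $2^a$ separately, showing $-1$ must be a square in $\ZZ/r$ and deriving a contradiction at primes $p \equiv 3 \pmod 4$; (5) for the remaining even $n$ (those whose odd part is a product of primes $\equiv 1 \bmod 4$, and $n$ possibly divisible by higher powers of $2$), invoke Corollary \ref{cor:wt} to constrain the ear structure and run a parity/counting argument on the type-$n$ edges to finish.

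I expect the main obstacle to be step (5): the purely local congruence obstructions of step (4) handle primes $\equiv 3 \pmod 4$ cleanly, but evenness alone (with odd part $1$, e.g. $n = 8, 16$) does not produce an immediate congruence contradiction, and one genuinely needs the global combinatorics of the triangulated polygon — specifically that the $\pm 1$ increments from type-$1$ sides, which must occur because $\triangle^{(1)} \in \SSS(1,n)$, cannot be arranged so that every type-$n$ vertical edge differs from every other by a multiple of (a high enough power of) $2$ while also differing from type-$1$ edges appropriately. Making the bookkeeping of $\hbox{JW}$ contributions along the boundary precise, and showing no admissible arrangement exists, is where the real work lies; the rest is routine once the right congruence is written down.
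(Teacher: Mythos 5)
There is a genuine gap: the case you yourself flag as the main obstacle --- even $n$ whose odd part has no prime $\equiv 3 \pmod 4$ (e.g.\ $n=2,4,8,16$) --- is exactly where the content of the proposition lies, and your step (5) is a hope rather than an argument. Moreover, by restricting at the outset to $n \notin \{2,4\}$ you leave those two values unproved, even though the proposition (and its use in Theorem \ref{thm:nonarith}) covers all even $n$; your appeal to Corollary \ref{cor:wt}(ii) is what forces that exclusion, but it is not needed. The elaborate apparatus of step (4) (quadratic residues modulo the odd part $r$) only reproves constraints that are irrelevant to evenness, so nothing in your outline actually uses the hypothesis $2 \mid n$ in a decisive way.

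The point you are missing is that evenness \emph{does} give an immediate congruence dichotomy, namely mod $2$: if $J$ is admissible and $2\mid n$, then Lemma \ref{lem:congruences}(2) gives $2\mid (m_k-m_l)^2$, so $m_k-m_l$ is \emph{even} for any two type-$n$ vertical edges, while Lemma \ref{lem:congruences}(3) gives $2\mid (m_k-m_l)^2+1$, so $m_k-m_l$ is \emph{odd} for edges of different types; since by Lemma \ref{lem:QTV}(6) at least two sides of $J$ are type $n$, any two type-$1$ edges also differ by an even number. This already forbids a $\triangle^{(1)}$ ear (its tip has $J$-width $1$, giving two type-$1$ sides at odd distance), so every ear of $J$ is a $\triangle^{(n)}$ tile; removing an ear preserves the parity conditions, and peeling ears inductively must eventually leave a jigsaw built only of $\triangle^{(1)}$ tiles (at least one such tile is present by definition of an $\SSS(1,n)$ jigsaw), which violates the parity condition --- contradiction. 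That is the paper's entire proof, uniform in all even $n$; no analysis of $-1$ as a quadratic residue, no tracking of $\hbox{JW}$ sums modulo $2^a$, and no exceptional values of $n$ are needed. Your boundary-walking bookkeeping could perhaps be pushed through, but as written it does not constitute a proof, and the decisive observation (work modulo $2$ and induct by removing ears) is absent from it.
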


\begin{proof}
Essentially the same proof as the proof of proposition 7.2  of \cite{LTV} for $\SSS(1,2)$  jigsaws works. For the convenience of the reader, and as  elements of the same arguments are used in later cases, we repeat it here. 

Suppose not. Then we have an $\SSS(1,n)$ jigsaw $J$ which is admissible.
For any two vertical sides $e_{k}=[\infty, m_{k}]$  and $e_{l}=[\infty, m_{l}]$ of $\QQQ_J$, by Lemma \ref{lem:congruences}, if both $e_{k}$ and $e_{l}$ are type $n$, then $n|(m_{k}-m_{l})^2$ which implies $m_{k}-m_{l}$ is even.
If $e_{k}$ and $e_{l}$ are different types, then $n|((m_{k}-m_{l})^2+1)^2$ which implies $m_{k}-m_{l}$ is odd. 
Furthermore, by Lemma \ref{lem:QTV} (6), at least two sides of $J$ are type $n$. Hence $m_{k}-m_{l}$ is even if both $e_{k}$ and $e_{l}$ are type 1. 

Now pick an ear of $J$, it must be of type $n$,  otherwise we have two type 1 sides with different parity. Let $J_2$ be the jigsaw obtained by removing the ear. It is easy to verify that $J_2$ will still satisfy the same parity conditions for its sides, with respect to the type. In particular, it cannot have only type 1 tiles (otherwise there will be two type 1 sides with different parity). So $J_2$ is admissible and we can proceed inductively. However, after a finite number of steps we must end with $J_n$ consisting only of type 1 tiles, (since we assume at least one type 1 tile in the definition) which is not admissible, giving a contradiction.
\end{proof}

\noindent We next show:

\begin{prop}\label{prop:othercases}
    For $n\neq 3,5,9,25$ and odd, all $\SSS(1,n)$ jigsaw groups are non-arithmetic.
\end{prop}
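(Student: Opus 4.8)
\textbf{Proof proposal for Proposition \ref{prop:othercases}.}

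The plan is to mimic and extend the parity argument of Proposition \ref{prop:even}, but working modulo a prime $p$ (or prime power) dividing $n$ rather than modulo $2$. First I would reduce to the case where $n$ is a prime power: if $n$ has two distinct odd prime divisors $p$ and $q$, then for any two type-$n$ vertical sides $e_k,e_l$ of $\QQQ_J$, Lemma \ref{lem:congruences}(2) forces $n\mid (m_k-m_l)^2$, hence $p\mid m_k-m_l$ and $q\mid m_k-m_l$; while for sides of different types Lemma \ref{lem:congruences}(3) forces $n\mid((m_k-m_l)^2+1)^2$, so $(m_k-m_l)^2\equiv -1 \pmod p$ and $\pmod q$. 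This means $-1$ must be a quadratic residue mod $p$ and mod $q$, i.e. $p\equiv q\equiv 1 \pmod 4$, and even then the ``type-$1$ versus type-$n$'' and ``type-$n$ versus type-$n$'' conditions together are contradictory modulo $p$ by exactly the same ear-peeling induction used in Proposition \ref{prop:even} (peel off type-$n$ ears — a type-$1$ ear tip adjacent to a type-$n$ side is forbidden whenever $n\notin\{2,4\}$ by Corollary \ref{cor:wt}, and in any case would force two type-$1$ sides with $m$-values incongruent mod $p$). So it suffices to handle $n=p^a$ with $p$ an odd prime.

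Next I would analyze $n=p^a$. Write $d_{kl}=m_k-m_l$. If $e_k,e_l$ are both type $n$: $p^a\mid d_{kl}^2$, so $v_p(d_{kl})\ge \lceil a/2\rceil$. If they are of different types: $p^a\mid (d_{kl}^2+1)^2$, so $p^a\mid d_{kl}^2+1$ (since $p$ odd cannot divide both $d_{kl}^2+1$ and its ``conjugate''; more simply $\gcd(d_{kl}^2+1,p)\le p$ and we just need $v_p((d_{kl}^2+1)^2)\ge a$, giving $v_p(d_{kl}^2+1)\ge\lceil a/2\rceil$), which forces $-1$ to be a square mod $p$, hence $p\equiv 1\pmod 4$. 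Thus for $p\equiv 3\pmod 4$ — e.g. $n=3^a$ with $a\ge 2$, or $n=7,11,19,23,\dots,27,\dots$ — a jigsaw cannot have both a type-$1$ and a type-$n$ side with a vertical-side pair of differing types, and the ear-peeling induction (using Lemma \ref{lem:QTV}(6) to guarantee at least two type-$n$ sides, Corollary \ref{cor:wt} to control the ear tips, and Lemma \ref{lem:polygon} for the existence of ears) terminates at an all-type-$1$ jigsaw, which is not admissible — contradiction. This kills every odd prime power $p^a$ with $p\equiv 3 \pmod 4$ except $p^1=3$ (which is in the exceptional list).

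For $p\equiv 1\pmod 4$ with $a\ge 2$ (the first unhandled case being $n=25$ when $a=2$, which is excluded, so really $a\ge 3$, e.g. $n=125$; and $n=5^a$, $n=13^a$, etc.), one exploits the mismatch in $p$-adic valuations. Fix a type-$1$ side $e_0$. For a type-$n$ side $e_k$ we get $v_p(d_{0k})\ge\lceil a/2\rceil$; for a type-$n$ side $e_l$ of ``different type relative to another type-$n$ or type-$1$ side'' we track the valuation of $d_{0l}^2+1$. The point is that modulo $p^{\lceil a/2\rceil}$ (or a suitable power), all type-$1$ sides have $m_k$ congruent to each other, and all type-$n$ sides have $m_k$ lying in a fixed square-root-of-$-1$ coset of that common residue; but Lemma \ref{lem:QTV}(4) says consecutive $|m_k-m_{k+1}|=\mathrm{JW}(v_{k+1})\in\{1,2,n+1,n+2\}$, and walking around the polygon these increments must return to the start after the full period $L$, forcing congruence constraints on the number and placement of type-$n$ sides that, combined with Lemma \ref{lem:polygon} and Corollary \ref{cor:wt}, cannot be met once $a$ is large enough (specifically $a\ge 2$ with the $n=25$, $n=9$ borderline cases needing the separate, finer treatment deferred to the Appendix). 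I anticipate this last case — odd prime powers $p^a$, $p\equiv 1\pmod 4$, $a\ge 2$ — to be the main obstacle, since here the naive residue argument mod $p$ alone is not contradictory and one genuinely needs the interplay between the $p$-adic valuation bounds, the increment structure of Lemma \ref{lem:QTV}(4)--(5), and the combinatorics of ears; the cleanest route is likely an induction on $|J|$ peeling type-$n$ ears while maintaining a congruence invariant on the $m_k$ modulo $p^{\lceil a/2\rceil}$, bottoming out at a small jigsaw whose admissibility can be checked directly.
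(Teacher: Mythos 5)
Your overall strategy (the congruences of Lemma \ref{lem:congruences}, the quadratic\--residue obstruction for $-1$ mod a prime factor of $n$, and the ear/weight combinatorics from Lemma \ref{lem:polygon} and Corollary \ref{cor:wt}) is the same as the paper's, but the proposal has a genuine gap at its core. The reduction to prime powers rests on the claim that, when $p\equiv 1\pmod 4$, the ``type-$1$ versus type-$n$'' and ``type-$n$ versus type-$n$'' conditions are already contradictory modulo $p$ ``by exactly the same ear-peeling induction used in Proposition \ref{prop:even}''. This cannot be correct: the argument never actually uses the second prime $q$, so if it were valid it would apply verbatim to $n=5$ and $n=25$, contradicting Proposition \ref{prop:3,5,9,25} (the diamond $\SSS(1,5)$ and $\SSS(1,25)$ jigsaws are admissible and even arithmetic). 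The parity proof of Proposition \ref{prop:even} works because any two type-$1$ sides are forced to agree mod $2$; modulo an odd $p\equiv 1\pmod 4$ two type-$1$ sides may differ by $0$ or $\pm 2a$ (where $a^2\equiv -1$), and admissibility imposes no condition at all on type-$1$/type-$1$ pairs (Lemma \ref{lem:congruences}(1)), so ``two type-$1$ sides incongruent mod $p$'' is not a contradiction. A related omission: your case list ($p\equiv 3\pmod 4$; $p\equiv 1\pmod 4$ with $a\ge 2$) never treats $n=p$ prime with $p\equiv 1\pmod 4$, $p\ge 13$ (e.g.\ $n=13,17,29$), which is exactly the hardest case in the paper. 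There one needs the quantitative facts that any square root $a$ of $-1$ mod $p$ satisfies $4\le a\le p-4$ and $|a-(p-a)|\ge 3$ --- true only for $p\ge 13$ and false for $p=5$ --- to get lower bounds on vertex weights by side types, conclude that every ear is a $\triangle^{(n)}$ tile with tip between two type-$n$ sides, remove all ears, and reach a contradiction by examining the ears of the remaining polygon. Any argument that does not distinguish $p=5$ from $p\ge 13$ is doomed, since the conclusion is false for $p=5$.

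There are secondary problems as well. For $p\equiv 3\pmod 4$ the correct conclusion is that \emph{all} sides of $J$ are type $n$ (not that ear-peeling ``terminates at an all-type-$1$ jigsaw''), and the contradiction is that every vertex $J$-width is then divisible by $p$, against Corollary \ref{cor:wt}; moreover this only works for $p\ge 7$, since for $p=3$ a $J$-width of $3$ is allowed, which is why the paper works mod $9$ for $n=3^k$ and mod $15$ for $n$ divisible by $15$. Your prime-at-a-time framework loses precisely the composite case $n=15,45,\dots$ with no prime factor exceeding $5$: mod $3$ alone is vacuous (JW $=3$ is fine) and mod $5$ alone gives no residue obstruction ($-1$ is a square), whereas divisibility of all differences $m_k-m_l$ by $15$ does the job. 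Finally, the $p\equiv 1\pmod 4$, $a\ge 2$ case is only a plan (``cannot be met once $a$ is large enough''), where the paper gives an explicit mod-$25$ weight computation for $n=5^k$, $k\ge 3$ (and $p\ge 13$ powers are absorbed into the large-prime case); and the ``borderline $n=9,25$'' treatment you defer to the Appendix belongs to the arithmeticity statement, Proposition \ref{prop:3,5,9,25}, not to this one. As written, the proposal does not cover $n$ prime $\equiv 1\pmod 4$, mishandles $n$ divisible by $15$, and rests its reduction on a false claim, so it does not constitute a proof.
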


\begin{proof} We will show that the jigsaws are not admissible by contradiction, eliminating all cases systematically. Suppose $J$ is admissible.
By Lemma \ref{lem:congruences}, if both $e_{k}$ and $e_{l}$ are type $n$, then $n|\left(m_{k}-m_{l}\right)^{2}$
which implies $m_{k}-m_{l}$ is divisible by all prime factors of $n$. If $e_{k}$ and $e_{l}$ are different types, then $n|\left(\left(m_{k}-m_{l}\right)^{2}+1\right)^{2}$
and hence $-1\equiv\left(m_{k}-m_{l}\right)^{2}$ is a square modulo
$p$ for any prime factor $p$ of $n$. Since $J$ always has at least two sides
of type $n$, if $n$ has a prime factor congruent to $3$ mod $4$,  all sides of $J$ must be of type $n$.
 The rest of the proof is divided into five cases:
\begin{enumerate}
    \item Suppose $n$ has a prime factor $p>5$ and $p$ congruent to 3 mod 4.
    Then all edges of $J$ must be of type $n$. Since $m_{k}-m_{l}$ is divisible by $p$ for any $k$ and $l$, the $J$-width of each vertex of $J$ is divisible by $p$ which contradicts Corollary \ref{cor:wt}. 
    
    \item Suppose $n$ has a prime factor $p>5$ and $p \equiv 1 \mod 4$ (so $p\ge 13$).
    Let $a$ be a square root of $-1$ mod $p$ between $1$ and $p$. It is easy to check that
$4\le a\le p-4$ and $\left|a-\left(p-a\right)\right|\ge3$. Indeed, $a^{2}+1\ge p\ge 13$ implies $a\ge4$. Similarly, $p-a\ge4$.
If $\left|a-(p-a)\right|<3$, then $a=\dfrac{p\pm 1}{2}$. This cannot
happen unless $p$ divides $5$, a contradiction. 

    Hence the $J$-width of the vertex between:
    \begin{enumerate}
        \item [(i)] two sides of type $1$ is congruent to $0$ or $2a$ or $-2a$ mod $p$. 
        \item [(ii)] a side of type $1$ and a side of type $n$ is congruent to $a$ or $p-a$ mod $p$. 
        \item [(iii)] two sides of type $n$ is divisible by $p$.
    \end{enumerate}
    Therefore, the weight of the vertex:
    \begin{enumerate}
        \item [(a)] between two sides of type 1 is at least $3$;
        \item [(b)] between a side of type 1 and a side of type $n$ is at least $4$;
        \item [(c)] between two sides of type $n$ is $1$ or at least $p$;
        \item[(d)] is $1$ only if it is between two type $n$ sides, so all ears of $J$ are $\triangle^{(n)}$ tiles, with tip between type $n$ sides.
    \end{enumerate}
    If $|J|=2$, (b) does not hold, so without loss of generality, we may assume $|J| \ge 3$, and we remove all the ears of $J$ which must be $\triangle^{(n)}$ tiles from (d) above. Consider any ear of the remaining polygon $J'$ with tip $v$. 
    \begin{itemize}
        \item If $v$ is bounded by two type $1$ sides in $J'$, as a vertex of $J$, its weight and $J$-width  is either $2$ or $3$ and it is bounded by either two type $n$ sides, or one type 1 and one type $n$ side in $J$ (by (d) above) which contradicts (b) and (c) above.
        \item If $v$ is bounded by two type $n$ sides in $J'$, then $J$ has a ear with tip between a type 1 and type $n$ side which contradicts (d).
        \item  If $v$ is bounded by a type 1 and a type $n$ side in $J'$, as a vertex in $J$, its weight is  $2$ (no ear can be attached to the type $n$ side by (d) above) and at least one of the sides is type $n$, which again contradicts (b) and (c) above.

    \end{itemize}

    

 \noindent   Therefore, the new polygon $J'$ has no ears (contradiction).
    
    \item Suppose $n$ is divisible by $15$.
    All the boundary sides of $J$ must be of type $n$. Hence the weight of each vertex is $1$ if it is the tip of a ear or at least $15$ otherwise. This contradicts Lemma \ref{lem:polygon} (b).

    \item Suppose $n=3^k$ with $k>2$.
    All edges are type $n$, and by Lemma \ref{lem:congruences} (2) the weight of each vertex is 1 or at least 9, which contradicts Lemma \ref{lem:polygon} (b).

    \item Suppose $n=5^k$ with $k>2$.
    By Lemma \ref{lem:congruences} (2),  if both $e_{k}$ and $e_{l}$ are type $n$, then $m_{k}-m_{l}$ is divisible by 25.
    If $e_{k}$ and $e_{l}$ are of different types, then $m_{k}-m_{l}$ is a square root of $-1$
modulo 25. 
    Then the $J$-width of the vertex between:

    \begin{itemize}
        \item two sides of type $1$ is congruent to 0 or 11 or 14 mod 25.
        \item two sides of different types is congruent to 7 or 18. 
        \item two sides of type $n$ is divisible by 25.
    \end{itemize}
    Hence the weight of the vertex between:
    \begin{itemize}
        \item two sides of type 1 is at least $11$.
        \item a side of type 1 and a side of type $n$ is at least $7$.
        \item two sides of type $n$ is 1 or at least 25.
    \end{itemize}
    This also contradicts Lemma \ref{lem:polygon} (b).

\end{enumerate}
This completes the proof of the proposition.






\end{proof}

It is tempting to think that the above can be strengthened to the remaining cases to show non-arithmeticity for all $n$, as was first suspected by the authors. However, it turns out that Theorem \ref{thm:nonarith} (i) is completed by the following somewhat surprising result, especially in view of part (ii) of the theorem:

\begin{prop}\label{prop:3,5,9,25}
There exists arithmetic $\SSS(1,n)$ jigsaw groups for $n\in \{3,5,9,25\}$ and these arise only as subgroups of the star jigsaw group when $n=3$ or $9$, and the diamond jigsaw group when $n=5$ or $25$.
\end{prop}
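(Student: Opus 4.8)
The plan is to prove the two assertions separately: first, exhibit explicit arithmetic examples for each $n \in \{3,5,9,25\}$, and second, show that these are the only possible shapes of arithmetic jigsaws. For the existence half, I would write down the star jigsaw for $n=3$ and $n=9$ and the diamond jigsaw for $n=5$ and $n=25$, compute the relevant traces $\tr(\gamma)$ and $(\tr\gamma)^2$ for the finite collection of $2^{N+1}-1$ words in the $\gamma_k$ (and the same words in the $\gamma_k^2$) guaranteed by \cite{HLM} and \cite{Hor}, and verify that all lie in $\ZZ$; by Takeuchi's criterion \cite{Tak} this establishes arithmeticity of $H_J$, hence of $\Gamma_J$. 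For the diamond cases ($N=2$) this is a short hand computation with the matrices $h_k$ from Lemma \ref{lem:QTV}(3); for the star cases ($N=4$) it is longer but still finite, and I would relegate the bulk of it to an appendix (as the paper indeed promises). The key structural point making these work is the congruence identities in Lemma \ref{lem:congruences}: for $n=3$ and $9$ one checks that a star configuration with all outer tiles of type $n$ places the vertical edges $e_k$ at endpoints $m_k$ whose pairwise differences satisfy $n\mid (m_k-m_l)^2$ when both edges are type $n$ and $n\mid((m_k-m_l)^2+1)^2$ when the types differ; for $n=5$ and $25$ the diamond does the same. One must also check the remaining (non-vertical) traces, but by the standard reduction this follows once the admissibility of $J$ is upgraded using a few more words.

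For the uniqueness half — that arithmetic $\SSS(1,n)$ jigsaw groups for these $n$ arise \emph{only} as finite index subgroups of the star ($n=3,9$) or diamond ($n=5,25$) jigsaw groups — I would argue by the same elimination strategy used in Proposition \ref{prop:othercases}, now specialized to $n \in \{3,5,9,25\}$. Suppose $J$ is an admissible $\SSS(1,n)$ jigsaw. For $n=3$: since $3 \equiv 3 \bmod 4$, $-1$ is not a square mod $3$, so by Lemma \ref{lem:congruences}(3) no two edges of $J$ can have different types; combined with Lemma \ref{lem:QTV}(6) (at least two sides are type $n$) this forces \emph{all} sides of $J$ to be type $n$, i.e. all tiles visible along the boundary are $\triangle^{(3)}$. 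Then for any vertex bounded by two type-$n$ sides, its $J$-width is divisible by $3$, so by Lemma \ref{lem:QTV}(4) its weight is $1$ or at least $3$; by Corollary \ref{cor:wt} it has two non-adjacent vertices of $J$-width in $\{2,3,n+1,n+2\} = \{2,3,4,5\}$, and since all widths here are multiples of $3$ these must equal $3$, forcing weight exactly... wait, weight $3$ (three triangles at the vertex) — and I would then show, by a short combinatorial analysis of how $\triangle^{(3)}$ tiles can fit together around such vertices subject to the matching-of-marked-points and balancing conditions, that $J$ must be exactly the star jigsaw (one central $\triangle^{(1)}$ surrounded by three $\triangle^{(3)}$). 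The same scheme handles $n=9$ (again $9$ has the prime factor $3 \equiv 3 \bmod 4$). For $n=5$ and $n=25$: here $-1$ \emph{is} a square mod $5$, so mixed-type edges are allowed; I would use the finer congruences from case (5) of Proposition \ref{prop:othercases} (the $J$-width between two type-$1$ sides is $\equiv 0, \pm 11 \bmod 25$, etc., and for $n=5$ the analogous list mod $5$) together with Corollary \ref{cor:wt} and Lemma \ref{lem:polygon} to pin down the admissible weights, then classify the resulting polygons and show the diamond is the only one.

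The main obstacle I expect is the uniqueness half in the $n=5,25$ cases, where mixed-type edges survive the congruence test and the combinatorics of which triangulated polygons are admissible is genuinely more delicate than the $3 \bmod 4$ situation: one cannot immediately conclude that all sides are type $n$, so the ear-removal induction of Proposition \ref{prop:othercases} does not terminate in an obvious contradiction but must instead be run until it terminates at the diamond, and one must carefully exclude larger diamond-like configurations (e.g. two $\triangle^{(1)}$ tiles, or a $\triangle^{(1)}$ with two $\triangle^{(5)}$) by checking that the induced endpoint congruences fail. A secondary, purely computational obstacle is verifying arithmeticity of the star groups for $n=9$, where the word-length bound gives $2^5-1 = 31$ traces to control; this is routine but tedious, and I would organize it by noting that the relevant traces are polynomials in the $m_k$ with coefficients controlled by Lemma \ref{lem:QTV}(3), reducing the check to a finite set of integrality congruences that the explicit star configuration satisfies. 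The passage from admissibility of $J$ to full arithmeticity of $\Gamma_J$ (rather than just the $h_k h_l$ condition) in the existence direction also requires the extra words beyond $\{h_k h_l\}$, but this is handled by the same finite Takeuchi-type check.
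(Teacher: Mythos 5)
Your existence half matches the paper's proof: arithmeticity of the star ($n=3,9$) and diamond ($n=5,25$) groups is verified by the finite Takeuchi/HLM/Horowitz trace check on the $\gamma_{\nu_1}\cdots\gamma_{\nu_k}$ and $\gamma_{\nu_1}^2\cdots\gamma_{\nu_k}^2$ words, relegated to an appendix. The problem is the uniqueness half, where your proposal aims at the wrong target. The proposition does not say that an admissible $\SSS(1,n)$ jigsaw $J$ must \emph{be} the star or the diamond; it says that any arithmetic $\SSS(1,n)$ jigsaw group is a finite index \emph{subgroup} of the star or diamond group. Arbitrarily large admissible, indeed arithmetic, jigsaws exist: any jigsaw assembled from translates of the diamond $D$ inside the tessellation $\QQQ_D$ (resp.\ from star blocks inside the star tessellation) has $\Gamma_J<\Gamma_D$ of finite index and is therefore arithmetic. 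Consequently your plan to run the congruence/ear elimination until it shows ``$J$ must be exactly the star jigsaw'' (for $n=3,9$) or ``the diamond is the only one'' (for $n=5,25$), and to ``exclude larger diamond-like configurations,'' is attempting to prove a false statement and cannot close.

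What the paper actually does, and what is missing from your proposal, has two steps. First, ear-peeling with the mod-$5$ (resp.\ mod-$3$, mod-$9$, mod-$25$) congruences shows that an admissible $J$ has no stray configurations: every ear of $J':=J$ minus its ears pairs with exactly one adjacent ear of the other type to form a diamond (resp.\ $J$ has no type-$1$ sides and decomposes into star blocks), so $J$ peels down through admissible jigsaws $J=J_1\supset J_2\supset\cdots\supset J_{N/2}=D$, each obtained from the previous by removing a diamond. Second --- and this is the crux you omit --- one reverses the peeling to show $J\subset\QQQ_D$, hence $\Gamma_J<\Gamma_D$: attaching a diamond to $J_{i+1}$ along a type-$n$ side can be done in only one way, automatically compatible with $\QQQ_D$, while attaching along a type-$1$ side can be done in two ways, of which at most one is admissible (compare $J$-widths against an existing type-$n$ side), and the $\QQQ_D$-compatible attachment \emph{is} admissible precisely because $\Gamma_D$ is arithmetic; so the admissible attachment is the compatible one. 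Without this second step you only get a block decomposition of $J$, not the containment of $\Gamma_J$ in the star/diamond group, which is the actual content of the ``only as subgroups'' assertion.
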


\begin{proof}
 The cases $n=3$ and $9$ are similar, while the cases $n=5$ and $25$ are similar.
When $n=3$, \cite[proposition 7.4]{LTV}  states that the star jigsaw group $\Gamma_J$ is arithmetic, we provide more details so the same argument can be applied to the other  cases. We have
 $$\Gamma_J=\langle \iota_0, \ldots, \iota_5 ~|~ \iota_0^2=\cdots =\iota_5^2=Id \rangle $$
 where $\iota_k$ is the $\pi$-involution about the marked point $x_k$ on the $k$th side of $J$, which can be computed explicitly. Then 
 $$H_J=\langle \gamma_1, \ldots, \gamma_5 ~|~  \gamma_k=\iota_0\iota_k, \quad k=1, \ldots, 5 \rangle.$$
 By \cite[Cor 3.2]{HLM}, to show the invariant trace field is $\QQ$, it suffices to check that $\tr g \in \ZZ$ for all $g \in H_J^{(SQ)}:=\langle \gamma_1^2, \ldots, \gamma_5^2 \rangle$ and by \cite{Hor}, it suffices to check this for the $2^5-1=31$ words of the form 
 $$\gamma_{\nu_1}^2\gamma_{\nu_2}^2\ldots \gamma_{\nu_k}^2, \quad \hbox{where} \quad \nu_1<\nu_2 \cdots <\nu_k; \quad k \le n.$$
 To check that $\tr g$ is an algebraic integer for all $g \in H_J$, it suffices to check that $(\tr g)^2 \in \ZZ$ for the words $g$ of the form
 $$\gamma_{\nu_1}\gamma_{\nu_2}\ldots \gamma_{\nu_k}, \quad \hbox{where} \quad \nu_1<\nu_2 \cdots <\nu_k; \quad k \le n.$$
 The two conditions imply $\tr \gamma^2 \in \ZZ$ for all $\gamma \in H_J$ and hence arithmeticity of $H_J$. This is easily verified computationally, and we do the same for $n=9$ to verify that the star jigsaw in that case is also arithmetic (see Appendix A).
 
 It turns out that every arithmetic $\SSS(1,3)$ jigsaw group is a subgroup of the star jigsaw group (proposition 7.6 \cite{LTV}). Exactly the same argument works for the case $n=9$, the key observations are that if the jigsaw is arithmetic, then it cannot have any type 1 sides and must   decompose into a union of star blocks.
 
 For $n=5$ or $25$, $H_J$ for the diamond jigsaw is a free group of rank $3$, and a direct computation as above checking the traces of the relevant $7+7$ words shows that the diamond jigsaw group  is arithmetic (see Appendix A).
 
 Showing that all arithmetic examples arise as subgroups of the diamond jigsaw groups requires a bit more work, as for the $n=3,9$ cases with star jigsaws. Let $J$ be a $\SSS(1,n)$ jigsaw with $|J|=N$, $s_i=[v_i,v_{i+1}]$, where $i=0, \dots, N+1$ be the sides of $J$. Define $$m_{ij}:=\sum_{k=i+1}^{j} \hbox{JW}(v_k)=m_i-m_j,$$ the sum of the $J$-widths of all  vertices between $s_i$ and $s_j$, where indices are taken modulo $N+2$, and $m_j$ and $m_i$ are the endpoints of the vertical sides $e_i$ and $e_j$ of $\QQQ_J$ as defined in Lemma \ref{lem:QTV}. So $m_{ij}+m_{ji}=L$ where  $L=\sum_{k=0}^{N+1}\hbox{JW}(v_k)$ is the length of the fundamental interval. The conditions for $J$ to be admissible using the congruences from Lemma \ref{lem:congruences} easily translates to:
 \begin{lem}\label{lem:525}
 For $n=5,25$, the $\SSS(1,n)$ jigsaw $J$ is admissible if and only if the following conditions hold:
 \begin{enumerate}
    \item If $s_{k}$ and $s_{l}$ are type $n$, then $m_{kl}$ is
divisible by 5.
    \item If $s_{k}$ and $s_{l}$ are different types, then $m_{kl}\equiv  2 ~~\hbox{or}~~ 3 \mod 5$.
\item If $s_{k}$ and $s_{l}$ are type $1$, then $m_{kl}\equiv 0,1 ~~\hbox{or}~~-1 \mod 5$.
\item $L$ is divisible by $5$.
\end{enumerate}
\end{lem}
Suppose $J$ is admissible. It follows that if $E$  is an ear of $J$ of type $n$, then the tip of $E$ must be between two type $n$ sides and also, $|J| \neq 1,3$ and if $|J|=2$, then $J$ is a diamond. Hence we may assume $|J|\ge 4$. Let $\triangle_1, \ldots, \triangle_a$ be the ears of $J$ and let $$J':=J\setminus\cup_{i=1}^{a}\triangle_i.$$ 
Let $E$ be an ear of $J'$ with tip $v$. The weight of $v$ in $J$ is either $2$ or $3$, depending on whether $E$ has two or three ears adjacent to it which are not in $J'$. 
 Figure \ref{fig:table_of_cases} gives all possible configurations   where $E$ is the shaded triangle, sides of $J$ are bold lines, and the two sides which do not satisfy the congruence relations from Lemma \ref{lem:525} are marked by the red crosses in the figures on the left. It follows that the only possible admissible configurations (on the right) are that $E$ has exactly one ear $E'$ adjacent to it of a different type and $E \cup E'$ is a diamond $D$.

\begin{figure}
     \centering
     \includegraphics[width=\textwidth]{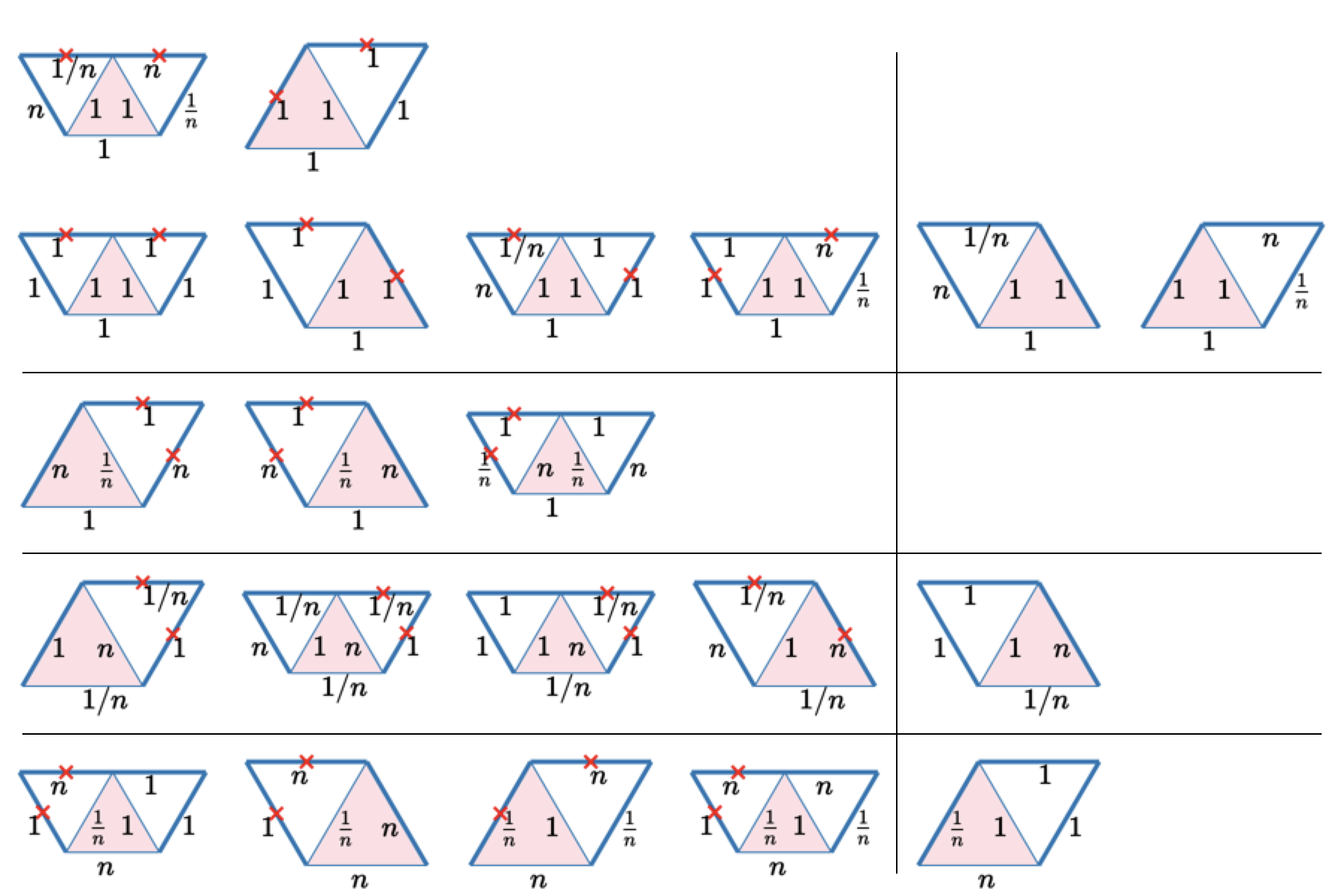}
    \caption{Enumeration of non-admissible (left column) and admissible (right column) cases for checking arithmeticity of $\SSS(1,n)$-jigsaws ($n=5$ or $25$). The shaded triangle is the ear of $J'$. }
    \label{fig:table_of_cases}
\end{figure}

In particular, $J$ decomposes into a union of $D$  with a jigsaw $J_2$. As before, Lemma \ref{lem:525}(4)  implies $|J_2| \neq 1,3$ and if $|J_2|=2$, then $J_2$ is a diamond, hence admissible. Otherwise, $|J_2| \ge 4$ and it is straightforward to check that $J_2$ satisfies the conditions of Lemma \ref{lem:525} and so is admissible. Indeed, denote $e_0$ to be the common edge of the diamond $D$ and $J_2$. To check the admissibility of $J_2$, it suffices to check the congruence of the sum of $J$-widths of vertices between $e_0$ and any other edge $e$ of $J_2$. This can be done by using other edges of $D$, given the admissibility of $J$. Proceeding inductively, we obtain a sequence 
$$J=J_1\supset J_2 \ldots, \supset J_{N/2}=D$$
of admissible $\SSS(1,n)$ jigsaws where $J_{N/2}$ is the diamond jigsaw $D$ and $J_k$, $2 \le k \le N/2$, is obtained from $J_{k-1}$ by peeling off a diamond. In particular, $N$ is even. We now show by reversing the process that  $J_k \subset \QQQ_D$ and $\Gamma_{J_k}$ is a subgroup of the diamond jigsaw group $\Gamma_D$ for $k=1, \ldots, N/2$. The statement is certainly true for $k=N/2$ since $J_{N/2}=D$, and $D \subset \QQQ_D$. Assume that $J_{i+1} \subset \QQQ_D$ and $J_i$ is obtained from $J_{i+1}$ by attaching a diamond, where both are admissible. If the diamond is attached along a type $n$ side, there is only one way to do it, which is compatible with $\QQQ_D$, so   $J_i \subset \QQQ_D$. If the diamond is attached along a type 1 side to $J_{i+1}$, there are two ways to do it (right side of first row of Figure \ref{fig:table_of_cases}), but at most one of these ways gives an admissible jigsaw $J_i$. Indeed, this can be deduced by comparing $J$-width between a side $e$ of $J_{i+1}$ of type $n$, which always exists, and sides of the diamond $D$ in the two ways of gluing. On the other hand, attaching a diamond to $J_{i+1}$ along that side in a way  compatible with $\QQQ_D$ produces an admissible jigsaw since the group is arithmetic. Hence $J_i$ is obtained by attaching a diamond in the correct way, compatible with $\QQQ_D$  and $J_i \subset \QQQ_D$. Proceeding inductively, we conclude that $J=J_1 \subset \QQQ_D$, which implies that $\Gamma_J < \Gamma_D$ as required. This completes the proof of Proposition \ref{prop:3,5,9,25}.

\end{proof}

\begin{proof}(Theorem \ref{thm:nonarith}) Part(i) follows from propositions \ref{prop:even}, \ref{prop:othercases} and \ref{prop:3,5,9,25}. For part(ii) , the second part of Corollary \ref{cor:wt} applied to the Weierstrass group implies that $n$ divides $4$. A direct verification shows that $n=1,2$ and $4$  gives arithmetic groups.
\end{proof}

\medskip

\noindent {\it Remark:} We are not sure if all $\SSS(1,n_2,\ldots, n_s)$-jigsaw groups are not arithmetic if $s\ge 3$ although in many cases, we can rule out arithmeticity. For example, it is easy to adapt the proof of proposition \ref{prop:even} to show that such groups cannot be arithmetic if at least one of $n_k$ is even.

\subsection{Arithmetic Diamond $\SSS(m,n)$ jigsaws}\label{ss:mnjigsaws}
We drop the condition that $\triangle^{(1)}$ is one of the tiles in an integral jigsaw set in this subsection and consider the more general jigsaw sets $\SSS(m,n):=\{\triangle^{(m)}, \triangle^{(n)} \}$, $1\le m \le n$. Here the diamond jigsaw consists of one tile of each type glued along the type 1 side (with suitable interpretation if $m=n$). We have:

\begin{thm}\label{thm:mn} Let $\mathcal{M}^{ref}{(0;2,2,2,2;1)}$ be the moduli space of hyperbolic surfaces with signature $(0;2,2,2,2;1)$ admitting a reflection symmetry where the fixed point locus of the reflection does not contain any of the order 2 points. There are exactly eight  arithmetic surfaces in $\mathcal{M}^{ref}{(0;2,2,2,2;1)}$. They correspond to the diamond $\SSS(m,n)$ jigsaw groups for the pairs $$(1,1), (1,5), (2,2), (2,6), (4,4), (4,8), (8,12), (16,20).$$ On the other hand, there are infinitely many pairs $(m,n)\in \NN\times \NN$ for which the diamond $\SSS(m,n)$-jigsaw group is arithmetic. They arise from the action of the mapping class group on the eight basic pairs above. More precisely, any arithmetic pair $(m,n)$ arises from one of the basic pairs above under the following infinite cyclic group action:
\begin{equation*}\label{eqn:mngenerator}
      (m,n) \quad \xrightarrow{R} \quad (n, (l-2)n-8-m), \quad \hbox{where} \quad l=(m+n+4)^2/mn\in \NN.
  \end{equation*}

\end{thm}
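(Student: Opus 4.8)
The plan is to establish the theorem in three movements: first, reduce the classification of arithmetic surfaces in $\mathcal{M}^{ref}{(0;2,2,2,2;1)}$ to a question about admissible diamond $\SSS(m,n)$-jigsaws; second, translate arithmeticity into an explicit Diophantine condition on the pair $(m,n)$; third, identify the $\MCG$-action that organizes the solutions into orbits and pin down the orbit representatives. For the first movement, I would argue that any surface $\Sigma$ of signature $(0;2,2,2,2;1)$ with a reflection $\sigma$ whose fixed locus avoids the four cone points has a quotient orbifold that is (essentially) a triangle with two marked interior points, and developing this shows $\Sigma/\sigma$ is precisely a diamond $\SSS(m,n)$-jigsaw for a unique pair $(m,n)$ up to the symmetry $m\leftrightarrow n$ — here the hypothesis that the fixed locus misses the cone points is exactly what forces the two marked points to sit in the interiors of the two tiles rather than on the reflection axis. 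The real and complex dimension count (the moduli space is one-real-dimensional, parametrized by $m/n$ or equivalently by the single modulus of the jigsaw) should make this correspondence clean.

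For the second movement, I would compute the trace field obstruction directly. By the criterion recalled in Section \ref{ss:resultsarith}, arithmeticity of the diamond jigsaw group is equivalent to admissibility of the diamond together with the integrality of a finite set of traces. The diamond has $|J|=2$, so $H_J$ is free of rank $3$ and there are only $2^3-1=7$ words to check in each of the two families; carrying out formula (\ref{eqn:tracesquare}) with the vertical-edge data from Lemma \ref{lem:QTV} reduces everything to a congruence/divisibility statement whose cleanest packaging is: the quantity
\begin{equation*}
    l \;=\; \frac{(m+n+4)^2}{mn}
\end{equation*}
must be a positive integer (this is where the edge lengths $L=2(m+n)+8$ and the cross-differences $m_k-m_l$ of Lemma \ref{lem:QTV}(4) enter). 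Conversely, $l\in\NN$ together with the one remaining parity/congruence check forces all seven traces to be algebraic integers. So the problem becomes: classify $(m,n)\in\NN^2$ with $mn \mid (m+n+4)^2$.

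For the third movement, I would recognize the equation $mn\,l = (m+n+4)^2$, for fixed $l$, as a Markov-type (Vieta-jumping) equation: regarding it as a quadratic in $n$ with $m,l$ fixed, if $(m,n)$ is a solution then so is $(m,n')$ with $n+n' = (l-2)m-8$ and $nn' = (m+4)^2$, hence $n' = (l-2)m - 8 - n$; symmetrizing in $m$ and $n$ gives the stated map $R:(m,n)\mapsto (n,(l-2)n-8-m)$, which preserves $l$. Standard descent — following $R$ in the direction that decreases $\max(m,n)$ — terminates at finitely many fundamental solutions, and a bounded search (which I expect to be the most computation-heavy but conceptually routine step, cf.\ Proposition \ref{prop:mn}) yields exactly the eight listed pairs, with $l\in\{8,6,4,\ldots\}$ taking the corresponding small values; one checks these are genuinely distinct orbits and that $R$ has infinite order on each (the $\max$ strictly increases away from the base), giving the claimed infinitude. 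The main obstacle I anticipate is not the number theory but the first movement: making the identification of $\mathcal{M}^{ref}{(0;2,2,2,2;1)}$ with diamond jigsaws airtight — in particular verifying that every such reflection symmetry, not merely the ``obvious'' ones, arises from the jigsaw picture, and that distinct pairs $(m,n)$ (modulo $m\leftrightarrow n$) give non-isometric surfaces — since the arithmeticity bookkeeping, while lengthy, is mechanical once that dictionary is in place.
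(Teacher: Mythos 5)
Your overall route is the same as the paper's: reduce to a diamond $\SSS(m,n)$ jigsaw, extract the invariant $l=(m+n+4)^2/mn\in\NN$ from trace conditions, and descend via the map $R$ (Vieta jumping) to a bounded set of base pairs checked against Proposition \ref{prop:mn}. But there are two genuine gaps. First, your ``first movement'' dictionary is both incomplete and partly wrong. The reflection symmetry only gives a decomposition of the fundamental quadrilateral into two marked triangles $\triangle(1/m,m,1)$ and $\triangle(1,1/n,n)$ with $m,n\in\RR_+$ (so $\mathcal{M}^{ref}(0;2,2,2,2;1)$ is two--dimensional, not one--dimensional as you claim); to speak of an integral $\SSS(m,n)$ jigsaw and invoke Lemma \ref{lem:QTV} at all, you must first prove $m,n\in\NN$, which the paper does by noting that arithmeticity forces the cusp set to be $\QQ\cup\{\infty\}$ (hence $m,n\in\QQ$) and then that $(\tr(\iota_2\iota_1))^2=(m-2)^2$ and its analogue for $\iota_3\iota_4$ are rational integers. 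Moreover, your hoped-for verification that ``distinct pairs modulo $m\leftrightarrow n$ give non-isometric surfaces'' is false and would contradict the theorem itself: pairs in one $R$-orbit, e.g.\ $(16,20)$ and $(20,36)$, give the same surface. What is needed, and what the paper uses, is only that $l$ is an isometry invariant separating the eight base pairs.

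Second, and more seriously, your second and third movements rest on the unproved assertion that arithmeticity of the diamond group is equivalent to $mn\mid(m+n+4)^2$ (plus an unspecified congruence), and on deriving $R$ purely algebraically as a Vieta jump. The paper proves only the necessity of $l\in\NN$ (from $(\tr(\iota_1\iota_3))^2$ via (\ref{eqn:tracesquare})), and obtains everything else from a geometric fact you never establish: $R$ is realized by an explicit change of fundamental domain $J\to J'$ for the same surface, the new tile parameters being $n$ together with the second root $m'$ of $(4+x+n)^2=lxn$, which is exactly what yields (\ref{eqn:mnrelation}). This geometric realization is what guarantees that both arithmeticity and the isometry class are constant along an $R$-orbit; pure Vieta jumping preserves the Diophantine solution set but by itself gives neither ``infinitely many arithmetic pairs'' nor ``every arithmetic surface is one of the eight.'' With that in place, the descent still needs the quantitative step (the paper shows $l\ge 5$ and that $n\ge 25$ forces $\min(m,m')<n$, bounding the base pairs), and sufficiency for the surviving base pairs is then a finite trace computation (the $7+7$ HLM--Horowitz words, Appendix A), not a consequence of the divisibility condition alone.
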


\begin{proof}
 Suppose that $S$ is such an arithmetic surface with holonomy $\Gamma$. We may take as the fundamental domain the ideal polygon $J$ with vertex set $\{\infty, -1,0,1\}$,  such that $\Gamma$  is generated by the involutions $ \iota_1, \iota_2, \iota_3, \iota_4$,  about the order two points $x_1,x_2,x_3,x_4$  on the sides of $J$ respectively, and such that the fixed point set of the reflection is the geodesic $[-1,1]$. Due to the symmetry,  $J$, with the marked points on the sides can be decomposed to the marked triangles $(\infty, -1,0)=\triangle(1/m,m,1)$ and $(\infty, 0, 1)=\triangle(1,1/n,n)$ where $m,n \in \RR_+$, and $x_1,x_2,x_3,x_4$ correspond to the marked points on the sides of these triangles, see Figure \ref{fig:mnA}, where $m=2$ and $n=5$, and the reflection axis is colored red. 
 
 \begin{figure}
     \centering
     \begin{subfigure}[b]{0.352\textwidth}
         \centering
         \includegraphics[width=\textwidth]{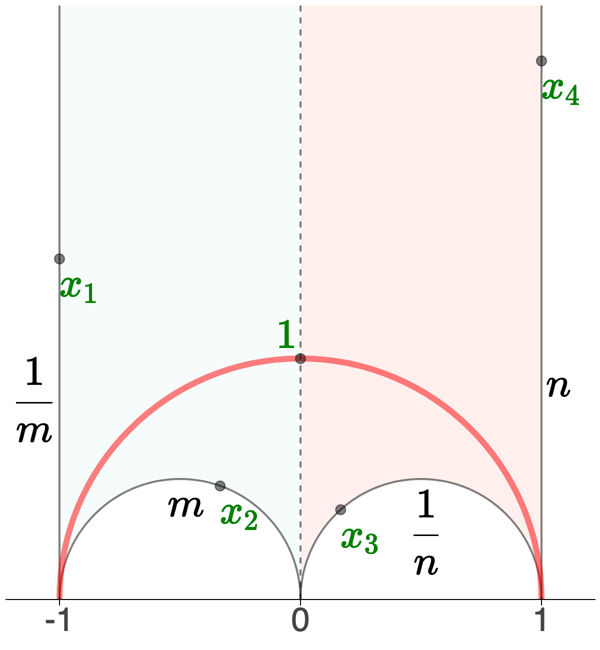}
         \caption{ $J$, with shaded tiles }
         \label{fig:mnA}
     \end{subfigure}
     \begin{subfigure}[b]{0.638\textwidth}
         \centering
         \includegraphics[width=\textwidth]{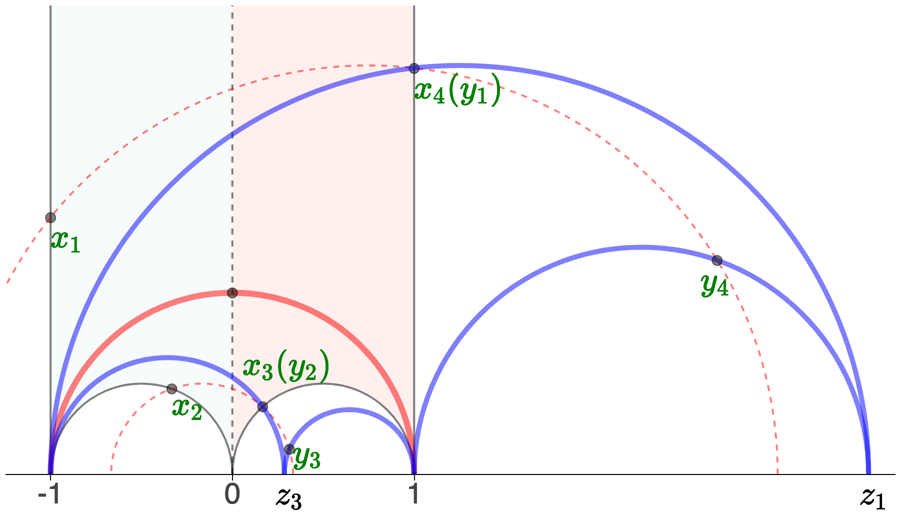}
         \caption{$J'$ bounded by blue geodesics }
         \label{fig:mnB}
     \end{subfigure}
        \caption{The diamond $\SSS(2,5)$ jigsaw $J$ and its image $J'$ under the action of $R$. }
        \label{fig:mn}
\end{figure}
 
 We first note that since $\Gamma$ is arithmetic, the cusp set is $\QQ \cup \{\infty\}$ which implies $m,n \in \QQ$ since $0,-1$, $-1-m$, $1$ and $1+n$ are cusps. Applying (\ref{eqn:tracesquare}) to $\iota_1(\iota_1\iota_2\iota_1)=\iota_2\iota_1$, we get 
  $$(\tr (\iota_2\iota_1))^2 = \dfrac{(m^2-2m)^2}{m^2}=(m-2)^2\in \NN \quad \Longrightarrow \quad m \in \NN.$$
 Similarly, by considering  $\iota_3\iota_4$, we get that $n \in \NN$.   Hence $J$ is an $\SSS(m,n)$ diamond jigsaw, where $m,n \in \NN$. Applying the criteria for arithmeticity, Proposition \ref{prop:mn} lists  the pairs $(m,n)$, $1 \le m \le n \le 10,000$ giving rise to arithmetic groups, which includes the eight listed pairs, which motivates the following analysis. 
 
 We now explain how any arithmetic  $(m,n)$ pair corresponds to a hyperbolic surface isometric to one of these eight surfaces. 
 Applying (\ref{eqn:tracesquare}) to $\iota_1\iota_3$, we get 
  \begin{equation}
      (\tr (\iota_1\iota_3))^2 = \dfrac{(4+m+n)^2}{mn}:=l\in \NN.
  \end{equation}
  Furthermore, since $\iota_1\iota_3$ is hyperbolic, $l \ge 5$.
  The subgroup of the mapping class group for the surface preserving the reflection symmetry is infinite cyclic, say with generator $R$.  We now describe how $J$  and the pair $(m,n)$ transforms under the action of $R$. Let $z_1$ and $z_3 \in \RR$ be the end points of the geodesics from $-1$ through $x_4$ and $x_3$ respectively, see figure \ref{fig:mnB} . Then it is easy to see that $J'$ with vertices at $z_1, -1, z_3, 1$ is a fundamental domain for $S$, and the fixed point set of the reflection remains $[-1,1]$. $J'$ arises from the new marking of the surface under the transformation $R$. The marked points (fixed points of the involutions) are at $y_1,y_2,y_3$ and $y_4$ where $y_1=x_4$, $y_2=x_3$, $y_4=\iota_4(x_1)$ is the intersection of $[1,z_1]$ and the geodesic through $x_1$ and $x_4$, and $y_3=\iota_3(x_2)$ is the intersection of $[1,z_3]$ and the geodesic through $x_2$ and $x_3$. In short,
  $$J\xrightarrow{R}J'$$
  and $J'$  again decomposes to two integral tiles of type $p$ and $q$. Since $J$ and $J'$ share the same reflection axis and fixed points $x_3$ and $x_4$, we have $p=n$, and writing $q=m'$, we see that $m,m'$ are the roots of the quadratic  polynomial 
  \begin{equation}
      (4+x+n)^2=lxn \quad \Longleftrightarrow \quad x^2+(8+2n-nl)x+(n+4)^2=0
  \end{equation}
  It follows that 
  \begin{equation}\label{eqn:mnrelation}
      m+m'=(l-2)n-8, \qquad mm'=(n+4)^2,
  \end{equation}
  and \begin{equation}\label{eqn:mngenerator}
      (m,n) ~~ \xrightarrow{R} ~~ (n, (l-2)n-8-m), \quad (m,n)~~ \xrightarrow{R^{-1}}~~ ((l-2)m-8-n, m).
  \end{equation}
  Next, we claim that if $n\ge 25$, then $\min (m,m')<n$. Suppose not. Then by (\ref{eqn:mnrelation}), $m=(n+4)^2/m'\le(n+4)^2/n\le n+8$, and similarly, $m'\le n+8$. This implies $m+m'\le 2n+16$, but $m+m'=(l-2)n-8>3n-8$ since $l \ge 5$ which gives a contradiction if $n \ge 25$. It follows that an arithmetic pair is equivalent to a pair $(m,n)$  where $n\le 25$, and again by (\ref{eqn:mnrelation}), $m\le 29$. This occurs as one of the pairs in the list of Proposition \ref{prop:mn}, which is equivalent to one of the eight pairs, noting that successive pairs $(m,n)$ in each row of Table \ref{tab:diamonds} are given by the action in (\ref{eqn:mngenerator}). 
  
  Alternatively, we note that $(m,n)$ and $(n,m)$ are equivalent as they give rise to the same surface, so if we take the pair $(m,n)$ in an equivalence class with the smallest entries and $m \le n$, then either $m=n$ or $(m,n)$ is the image of $(n,m)$ under $R$. In the first case, we have $(4+m)^2/m^2 \in \NN$ which implies $m=1,2$ or $4$, all of which gives arithmetic surfaces by Theorem \ref{thm:nonarith}(2). In the second case, interchanging $m$ and $n$ in (\ref{eqn:mnrelation}), we get 
  $$n^2=(m+4)^2, \quad 2n=(l-2)m \Longrightarrow 2(4+m)=(l-2)m \Longrightarrow 2+\frac{8}{m} \in \NN$$
  from which we get $m=1,2,4$ or $8$ with corresponding $n=5,6,8$ or $16$ respectively. Again one verifies that all five pairs $(1,5), (2,6), (4,8)$ and $(8,12)$ gives rise to arithmetic surfaces.
  
  Finally, we note that $l=(m+n+4)^2/mn$ is an invariant of the surface, so the eight pairs give distinct surfaces, which completes the proof.
\end{proof}
\noindent{\it Remark:} For each $l=5,6,8,9,12,16,20,36$, writing the arithmetic pairs in each orbit as $(m_i,m_{i+1})$, $i \in \ZZ$, (with minimal pair $(m_1,m_2)_l$) the linear recursion (\ref{eqn:mnrelation}) can be expressed as 
\begin{equation}
\left(\begin{array}{c}
m_{k+2}\\
m_{k+1}\\
1
\end{array}\right)
=\left(\begin{array}{ccc}
l-2 & -1 &-8\\
1 & 0 & 0\\
0 & 0 & 1
\end{array}\right)^k
\left(\begin{array}{c}
m_{2}\\
m_{1}\\
1
\end{array}\right)
\end{equation}

For example, for $l=5$, $(m_1,m_2)_5=(16,20)$, for $l=9$, $(m_1,m_2)_9=(4,4)$ (see Table \ref{tab:diamonds}).


\section{Pseudomodular groups from integral jigsaws}\label{s:PMG}
The main aim of this section is to prove the following:

\begin{thm} \label{thm:infinitepmg}
	For any integral jigsaw set  $\SSS(1,n_2,\ldots, n_s)$, $1<n_2<\cdots <n_s$, there exists infinitely many commensurabilty classes of  $\SSS(1,n_2,\ldots, n_s)$  pseudomodular jigsaw groups. 

\end{thm}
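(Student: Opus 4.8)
The plan is to adapt the strategy used in \cite{LTV} for the jigsaw sets $\SSS(1,2)$ and $\SSS(1,3)$, where pseudomodularity was established via a ``killer interval'' / special word argument, and then produce infinitely many non-commensurable examples by varying the jigsaw in a controlled way. First I would recall that a group $\Gamma_J < \PGLtwoQ$ with rational cusps containing $\Gamma_\infty$ is pseudomodular precisely when the fundamental interval $(0,L]$ is ``killed'': every rational in that interval is a cusp of $\Gamma_J$. The standard way to verify this, following Long--Reid and \cite{LTV}, is to exhibit a finite collection of parabolic fixed points together with horoball/interval data showing that the corresponding ``killer intervals'' cover $(0,L]$; equivalently one shows that the Euclidean algorithm associated to $\Gamma_J$ terminates on every rational. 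So the first concrete step is: for a given integral jigsaw set $\SSS(1,n_2,\ldots,n_s)$, construct one explicit jigsaw $J_0$ (for instance, a ``long'' jigsaw built by attaching many $\triangle^{(1)}$ tiles together with at least one copy of each $\triangle^{(n_i)}$, arranged so that the $\QQQ_{J_0}$-structure and the vertical edges $e_k=[\infty,m_k]$ are easy to compute via Lemma \ref{lem:QTV}) and verify by a finite computation that $\Gamma_{J_0}$ is pseudomodular. Since the set must contain $\triangle^{(1)}$ and at least one $\triangle^{(n_i)}$ for each $i$, one has a lot of freedom to insert enough $\triangle^{(1)}$ blocks to make the killer-interval computation go through, much as in the $\SSS(1,2),\SSS(1,3)$ cases.

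Next, to get \emph{infinitely many} examples I would produce an infinite family $\{J_t\}_{t\ge 0}$ of $\SSS(1,n_2,\ldots,n_s)$-jigsaws, each pseudomodular, and then show that infinitely many of them are pairwise non-commensurable. The natural construction is to form $J_t$ by inserting $t$ additional ``neutral'' blocks (blocks built only of $\triangle^{(1)}$ tiles, which contribute nothing new to the tile-type data but lengthen $J_t$) into the verified jigsaw $J_0$ in a way that preserves the covering of the fundamental interval by killer intervals --- the point being that attaching $\triangle^{(1)}$ pieces along type $1$ sides behaves exactly as in the modular/Farey situation, so the termination of the Euclidean algorithm is preserved. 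This requires a lemma (the analogue of the relevant lemma in \cite{LTV}) saying: if $J$ is pseudomodular and $J'$ is obtained from $J$ by gluing a $\triangle^{(1)}$ tile along a type $1$ side at a suitable vertex, then $J'$ is pseudomodular. Iterating gives the infinite family.

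For non-commensurability I would use a covolume / Euler-characteristic argument combined with an arithmetic invariant: the surfaces $\HH/\Gamma_{J_t}$ have signature $(0;2,\ldots,2;1)$ with the number of cone points growing linearly in $t$, so their areas (hence covolumes) are unbounded; within any single commensurability class the set of covolumes is discrete and, for non-arithmetic groups, any two commensurable lattices have commensurable (rationally related) covolumes with bounded denominators, so an infinite strictly increasing sequence of covolumes must hit infinitely many distinct commensurability classes. (Here one uses Margulis: these groups are non-arithmetic --- which for $s=2$ follows from Theorem \ref{thm:nonarith}, and for $s\ge 3$ one checks directly, e.g.\ via the parity argument in the remark after Theorem \ref{thm:nonarith} when some $n_i$ is even, or by an analogous congruence obstruction --- so each has a unique maximal lattice in its commensurator, and two of them are commensurable iff they have the same commensurator, whence one can compare the finitely-many sub-covolumes.) Passing to a subsequence then yields infinitely many distinct commensurability classes.

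The main obstacle I expect is the first step: rigorously verifying pseudomodularity of the base jigsaw $J_0$ uniformly over \emph{all} integral jigsaw sets, rather than for the two hand-checked cases of \cite{LTV}. One needs a clean, set-independent description of a family of killer intervals covering $(0,L]$ --- essentially showing that the pseudo-Euclidean algorithm attached to $\Gamma_{J_0}$ terminates on every rational --- and this is where the combinatorial bookkeeping of the tessellation $\QQQ_{J_0}$, the endpoints $m_k$, and the parabolic elements $h_kh_l$ gets delicate; a secondary subtlety is ensuring the inserted $\triangle^{(1)}$ blocks in $J_t$ genuinely preserve the covering (the gluing must be along a type $1$ side so that the local picture is exactly the Farey one), and that the construction stays within the prescribed jigsaw set. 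I would isolate the termination statement as the key lemma and prove it by an explicit descent on denominators, mirroring but generalizing the arguments of \cite{LTV}.
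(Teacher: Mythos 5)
Your overall strategy (cover a fundamental interval by killer intervals, then invoke non-arithmeticity and Margulis) is the same as the paper's, but the two places where you defer the work are precisely where the paper's proof lives, and one of your steps is actually incorrect. On pseudomodularity: you say you would build a ``long'' jigsaw $J_0$ with many $\triangle^{(1)}$ tiles and check coverage by a finite computation, and you flag this as the main obstacle --- but the paper resolves it with a specific device you do not supply. It defines $r$-Farey blocks (ideal polygons of $\triangle^{(1)}$ tiles whose vertices are exactly the rationals in $[0,1]$ with denominator at most $r$) and calls a jigsaw \emph{good} if an $n_k$-Farey block is attached along the type-$1$ side of every $\triangle^{(n_k)}$ tile. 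The point is quantitative: by Lemma \ref{lem:QTV} the only gaps not already covered by the radius-$1$ killer intervals at integer cusps are the length-$n_k$ intervals $[r,r+n_k]$ under $\triangle^{(n_k)}$ tiles, and inside such a gap the Farey block supplies cusps $r+n_k p/q$ with killer radius $1/q$; consecutive block vertices $p/q$, $u/v$ satisfy $q+v>n_k$, so $|n_k/(qv)|<1/q+1/v$ and the gap is covered. Inserting $\triangle^{(1)}$ tiles arbitrarily does not achieve this, so your ``neutral block'' insertion lemma and the base case both remain unproved. Also, for $n\in\{3,5,9,25\}$ non-arithmeticity is not immediate from Theorem \ref{thm:nonarith}; one must additionally observe that good jigsaw groups cannot be subgroups of the star or diamond jigsaw groups (and for $s\ge 3$ the paper uses the congruence arguments of Section \ref{s:nonarith}).

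The non-commensurability step is a genuine error, not just a gap. Unbounded covolume does not force infinitely many commensurability classes: all finite-index subgroups of one fixed maximal non-arithmetic lattice form a single commensurability class whose covolumes are an unbounded set of integer multiples of the minimal covolume, so ``an infinite strictly increasing sequence of covolumes must hit infinitely many distinct classes'' is false. Your parenthetical appeal to Margulis only gives that each class has a unique maximal lattice; you never show the commensurators of your $\Gamma_{J_t}$ differ. The paper's proof does exactly this: for good jigsaws of signature $(M,1,\ldots,1)$ it identifies the maximal embedded horocycle (lifting to the horizontal line at height $\sqrt{n_s}$), shows it has exactly two self-tangency points splitting it into a long and a short segment, and concludes as in \cite{LTV} that $\mathrm{Comm}(\Gamma_J)=\Gamma_J$. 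Since each group equals its own commensurator, good jigsaw groups with different $M$ (hence different covolume) cannot be commensurable, which is what yields infinitely many classes. Without this commensurator computation, or some substitute invariant that genuinely separates classes, your argument only produces infinitely many distinct groups.
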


The main tool we use is the Killer intervals introduced by Long and Reid in \cite{LR}.

\subsection{Killer Intervals}\label{ss:KI}
We have the following result which is a key idea from \cite{LR}.

\begin{prop}\cite{LR}, \cite[Prop 3.1]{LTV}. Suppose $\Gamma<\PGLtwoQ$ and
\[g = \left(
	\begin{array}{cc}
	\alpha & \beta \\
	\gamma & \delta \\
	\end{array}
	\right) \in \Gamma, \quad \hbox{where}\qquad \alpha, \beta, \gamma, \delta \in \ZZ, \quad  \gcd(\alpha, \beta, \gamma, \delta)=1.\]
	
	Suppose  further that $\gcd(\alpha, \gamma)=k$ so $\alpha=k\alpha'$, $\gamma=k\gamma'$ where $\gcd (\alpha', \gamma')=1$. Then for any $\dfrac{p}{q} \in \left(\dfrac{\alpha'}{\gamma'} -\dfrac{1}{\gamma}~,~\dfrac{\alpha'}{\gamma'} +\dfrac{1}{\gamma}\right)$,  $g^{-1}\left(\dfrac{p}{q}\right)$ has strictly smaller denominator than $\dfrac{p}{q}$.
\end{prop}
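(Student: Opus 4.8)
The plan is to exploit the fact that $g\in\PGLtwoQ$ acts on $\QQ\cup\{\infty\}$ by a Möbius transformation, and to track denominators carefully. First I would set $\dfrac{p}{q}$ in lowest terms and compute the preimage directly: since $g=\begin{pmatrix}\alpha&\beta\\\gamma&\delta\end{pmatrix}$, its inverse is (up to the scalar $\det g$, which is irrelevant for the action) $\begin{pmatrix}\delta&-\beta\\-\gamma&\alpha\end{pmatrix}$, so
\[
g^{-1}\!\left(\frac{p}{q}\right)=\frac{\delta p-\beta q}{-\gamma p+\alpha q}.
\]
The key observation is that the new denominator, before cancellation, is $-\gamma p+\alpha q = k(-\gamma' p+\alpha' q)$. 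So I would aim to show that $|{-\gamma' p+\alpha' q}| < q$ under the hypothesis $\left|\dfrac{p}{q}-\dfrac{\alpha'}{\gamma'}\right|<\dfrac{1}{|\gamma|}$, since the denominator of $g^{-1}(p/q)$ in lowest terms divides $-\gamma p+\alpha q$ in absolute value (it could only get smaller after cancellation), and hence is at most $|{-\gamma' p+\alpha' q}|\cdot k / (\text{common factor})$ — wait, I need to be careful here: the unreduced denominator is $k(-\gamma' p + \alpha' q)$, not $(-\gamma' p + \alpha' q)$, so I should instead bound $|-\gamma p + \alpha q| = |{-\gamma' p + \alpha' q}|\cdot k$ and show this, after dividing by the gcd of numerator and denominator, is less than $q$. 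Actually the cleaner route: show directly $|-\gamma p + \alpha q| < k q$ would not suffice; I want the reduced denominator $< q$.

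The correct estimate: from $\left|\dfrac{p}{q}-\dfrac{\alpha'}{\gamma'}\right|<\dfrac{1}{|\gamma|}=\dfrac{1}{k|\gamma'|}$, multiplying through by $|\gamma' q|>0$ gives $|\gamma' p - \alpha' q| < \dfrac{q}{k}$. Hence the unreduced denominator of $g^{-1}(p/q)$ is
\[
|-\gamma p + \alpha q| = |k(-\gamma' p + \alpha' q)| = k\,|\gamma' p - \alpha' q| < k\cdot\frac{q}{k} = q.
\]
Since the reduced denominator of any rational is at most the absolute value of any unreduced representative of it, the denominator of $g^{-1}(p/q)$ is strictly less than $q$, which is exactly the claim. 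I would also dispose of the degenerate edge cases: if $\gamma=0$ the hypothesis interval is empty (or one interprets $1/\gamma=\infty$ and there is nothing to prove, or $g$ fixes $\infty$ and the statement is vacuous for the relevant points), and if $-\gamma p+\alpha q=0$ then $g^{-1}(p/q)=\infty$, whose "denominator" is $0<q$, so the conclusion still holds — though I should check whether the authors' conventions even allow this, given $p/q\neq \alpha'/\gamma'$ strictly is forced by the strict inequality only when... actually $-\gamma p + \alpha q = 0$ means $p/q = \alpha/\gamma = \alpha'/\gamma'$, the center of the interval, which is allowed, so this case genuinely occurs and the interpretation "denominator $0$" handles it.

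The main obstacle, such as it is, is purely bookkeeping: making sure the gcd hypotheses ($\gcd(\alpha,\beta,\gamma,\delta)=1$ and $\gcd(\alpha',\gamma')=1$) are used correctly and that "smaller denominator" is interpreted via the reduced form, so that the inequality on the unreduced denominator genuinely implies the conclusion. There is essentially no hard analytic or algebraic content — the whole proof is the one-line multiplication of the defining inequality by $|\gamma' q|$ followed by substituting $\gamma = k\gamma'$, $\alpha = k\alpha'$. I expect the author's proof to be three or four lines along exactly these lines, possibly with the remark that one does not even need $g\in\Gamma$, only $g\in\mathrm{M}(2,\ZZ)$ with the stated gcd conditions.
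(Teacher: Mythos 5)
Your proof is correct, and it is essentially the argument of the cited sources: the paper itself does not reprove this proposition (it quotes it from \cite{LR} and \cite[Prop 3.1]{LTV}), and the proof there is exactly your computation, namely writing $g^{-1}(p/q)=\bigl(\delta p-\beta q\bigr)/\bigl(\alpha q-\gamma p\bigr)$ and noting that the killer-interval hypothesis gives $|\alpha q-\gamma p|=k\,|\alpha' q-\gamma' p|<k\cdot q/k=q$, so the reduced denominator is strictly smaller than $q$. Your handling of the edge cases ($\alpha q-\gamma p=0$ giving $\infty$, and reduction only shrinking the denominator) is fine and matches the intended reading.
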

Note that we chose a representative $g\in \Gamma$ with integer entries, the determinant is not necessarily one. The interval $(\dfrac{\alpha'}{\gamma'} -\dfrac{1}{\gamma},\dfrac{\alpha'}{\gamma'} +\dfrac{1}{ \gamma})$ is called the {\em Killer Interval} about the cusp $\dfrac{\alpha'}{\gamma'}$,  its {\em radius} is $\dfrac{1}{\gamma}$, $k$ is called the {\em contraction constant} for $\dfrac{\alpha'}{\gamma'}$ and in the case of integral jigsaw groups, does not depend on the choice of $g$.

\subsection{Farey blocks}\label{ss:Fareyblocks}
 We will construct the jigsaws judiciously so that it is easy to find a finite set of Killer intervals which cover the entire fundamental interval. Using the associated pseudo-euclidean algorithm, we deduce that the set of cusps is $\QQ \cup \{\infty\}$. For this we will introduce the idea of attaching  $n$-Farey blocks to the $\triangle ^{(n)}$ tiles. 
\begin{defn}
    An $r$-Farey block $B_r$, $r \ge 2$ is an ideal polygon/jigsaw with a distinguished side, assembled from $\triangle^{(1)}$ tiles inductively, starting by placing one tile on the triangle $(0/1,1/2,1/1)$ and adding more tiles until the vertices of $B_r$ are precisely the set $$\{p/q ~|~ 0 \le p/q\in \QQ \le 1, \gcd(p,q)=1, ~~~1 \le q \le r \}.$$ See Figure 5 for $B_3$ and $B_5$.  The side $[0,1]$ (colored red) is the distinguished side of $B_r$.
\end{defn}

\begin{figure}
     \centering
     \begin{subfigure}[b]{0.48\textwidth}
         \centering
         \includegraphics[width=\textwidth]{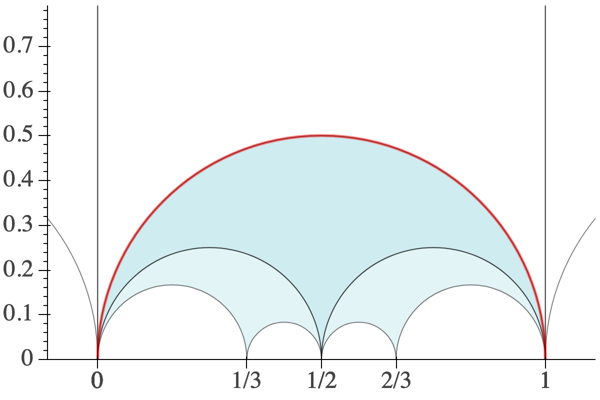}
         \caption{The 3-Farey block $B_3$}
         \label{fig:y equals x}
     \end{subfigure}
     \begin{subfigure}[b]{0.48\textwidth}
         \centering
         \includegraphics[width=\textwidth]{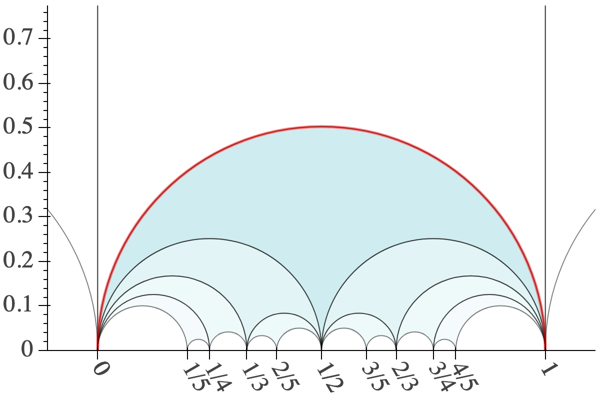}
         \caption{The 5-Farey block $B_5$}
         \label{fig:three sin x}
     \end{subfigure}
        \caption{ The 3-Farey block and 5-Farey block }
        \label{fig:three graphs}
\end{figure}

\begin{defn}
    An $\SSS(1,n)$ jigsaw $J$ is {\it good} if it has an $n$-Farey block attached  to every $\triangle^{(n)}$ tile in $J$, along the $1$-side of the tile, to the distinguished side of the block. Similarly, we can extend the definition to $\SSS(1, n_2, \ldots, n_s)$ jigsaws where every $\triangle^{(n_k)}$-tile, $k\ge 2$,  has a $n_k$-Farey block attached to it.
\end{defn}
\medskip
 In particular, for $\SSS(1,n)$, if there is only one $\triangle^{(n)}$-tile, we only need to attach a $n$-Farey block to it, and attaching any other number of $\triangle^{(1)}$ tiles  will give a good $\SSS(1,n)$-jigsaw $J$. 
 
 \subsection{Proof of Theorem \ref{thm:infinitepmg}} We first state and prove the result for $\SSS(1,n)$:

\begin{prop}
 Fix the jigsaw set $\SSS(1,n)$ where $n \ge2$.
 \begin{itemize}
    \item[(a)]For any good $\SSS(1,n)$ jigsaw $J$, $\Gamma_J$ is pseudomodular.
    \item [(b)] There are infinitely many non-commensurable pseudomodular $\SSS(1,n)$ jigsaw groups.
\end{itemize}
    
\end{prop}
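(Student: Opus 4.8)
The plan is to prove (a) first, then deduce (b) by a commensurability-invariant argument. For (a), I would use the Killer interval criterion from Proposition~\ref{...} (the Long--Reid lemma recalled in \S\ref{ss:KI}): to show $\Gamma_J$ is pseudomodular, it suffices to exhibit, for every rational $p/q$ in a fundamental interval $(x, x+L]$ for $\Gamma_J$, an element $g \in \Gamma_J$ whose associated Killer interval contains $p/q$, so that the pseudo-Euclidean algorithm strictly decreases the denominator and terminates at an integer (equivalently at $\infty$, which is already a cusp). Since $\Gamma_J < \PGLtwoQ$, the cusp set is contained in $\QQ \cup \{\infty\}$, so this would force equality. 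The point of the ``good'' hypothesis is that it makes the covering of the fundamental interval by Killer intervals completely explicit: I would first handle the intervals coming from the vertical edges $e_k = [\infty, m_k]$ and the $\pi$-rotations $h_k$ of Lemma~\ref{lem:QTV}, whose endpoints are integers, to reduce any $p/q$ to one in a subinterval near a single vertex or near a Farey block; and then I would use the fact that for a genuine Farey block $B_r$ assembled from $\triangle^{(1)}$ tiles, the classical Farey/Stern--Brocot structure gives exactly the Killer intervals needed to cover the block (this is essentially the statement that the full modular group $\PSLtwoZ$, which acts transitively on $\QQ \cup \{\infty\}$, has its fundamental interval covered by unit-radius Killer intervals about integers together with shrinking ones about the rationals of bounded height). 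Attaching the $n$-Farey block along the $1$-side of each $\triangle^{(n)}$ tile means precisely that the ``problematic'' side of each non-$\triangle^{(1)}$ tile now has the denominators-up-to-$n$ rationals present as cusps, which is exactly what is needed for the contraction constant $k = \sqrt n$-type elements through that tile to have Killer intervals that chain up and cover the gap.

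\textbf{Key steps for (a), in order.} (1) Write down a fundamental interval $(x,x+L]$ and the list of vertical edges $e_k=[\infty,m_k]$, $m_k \in \ZZ$, bounding the tessellation $\QQQ_J$, together with the $h_k$ from Lemma~\ref{lem:QTV}(3); note each $h_k$ gives a Killer interval about an integer (type $1$ edges) or about $m_k/1$ with radius $1$ after clearing denominators, since the bottom-left entry is $1$. (2) Observe these integer-centered unit Killer intervals, translated by the parabolic $\imath_{N+1}\cdots\imath_0$, already cover all of $\RR$ up to the rationals that are ``trapped'' near a $\triangle^{(n)}$ tile — i.e. reduce to the case $p/q$ lying over the image of a Farey block or near the type-$n$ side of a tile, with $q$ not too small. (3) For a $p/q$ over an $n$-Farey block $B_n$ (or its $\Gamma_J$-translate), use the internal $\triangle^{(1)}$-structure of $B_n$: by construction the vertices of $B_n$ are all rationals of height $\le n$ in $[0,1]$, and the $\pi$-rotations about marked points of these interior $\triangle^{(1)}$ tiles give Killer intervals of radii $1/q'$ for $q' \le n$ that cover $[0,1]$ minus a neighbourhood of each height-$\le n$ rational — but those rationals are themselves cusps (vertices of $B_n$), so the algorithm terminates. (4) Finally, for $p/q$ abutting the type-$n$ side itself, apply the $h_k$ for the type-$n$ edge: since the $n$-Farey block has brought in all denominators up to $n$ as cusps on the adjacent $1$-side, the contraction element through the tile has a Killer interval of radius $1/q$ covering the remaining gap. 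Chaining (2)--(4) covers $(x,x+L]$, proving pseudomodularity. The general $\SSS(1,n_2,\dots,n_s)$ case is identical, attaching an $n_k$-Farey block to each $\triangle^{(n_k)}$ tile.

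\textbf{Key steps for (b).} To get infinitely many commensurability classes, I would vary the good jigsaw $J$ within the fixed set $\SSS(1,n)$: e.g. take $J_t$ to be one $\triangle^{(n)}$ tile with its $n$-Farey block, plus $t$ extra $\triangle^{(1)}$ tiles attached in a fixed pattern, so $|J_t| \to \infty$ and hence the length $L_t = \sum \hbox{JW}(v_k) \to \infty$. Each $\Gamma_{J_t}$ is pseudomodular by part (a). To separate commensurability classes, use a commensurability invariant: the covolume (area of $\HH/\Gamma_{J_t}$) grows linearly in $|J_t|$, but commensurable lattices have commensurable covolumes (rational ratios), and — more robustly, since these are all non-arithmetic by the earlier propositions in \S\ref{s:nonarith} when $n \notin \{3,5,9,25\}$, and even when $n \in \{3,5,9,25\}$ most good $J_t$ are non-arithmetic — a non-arithmetic lattice has a unique maximal lattice in its commensurator, so commensurable non-arithmetic lattices have covolumes with \emph{bounded} denominators relative to that maximal one; letting $|J_t|$ grow along a suitable subsequence forces infinitely many distinct classes. (Alternatively, one can quote the length-spectrum or trace-field argument exactly as in \cite{LTV}, where this type of statement was already proved for $\SSS(1,2)$ and $\SSS(1,3)$; the only new input here is the ``good'' construction guaranteeing pseudomodularity for every $n$.)

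\textbf{Main obstacle.} The delicate point is step (3)--(4): verifying that the explicit finite set of Killer intervals — those from the integer-centered $h_k$, from the $\triangle^{(1)}$ tiles inside each $n$-Farey block, and from the contraction element through each $\triangle^{(n)}$ tile — genuinely covers the whole fundamental interval with no gaps, i.e. that the radii $1/q$ are large enough relative to the spacing of the cusps of bounded height that the intervals overlap. This is a combinatorial bookkeeping argument about the Farey/Stern--Brocot tessellation inside $B_n$ together with how $B_n$ sits against the type-$n$ side, and getting the inequalities on denominators exactly right (so the algorithm is strictly contracting everywhere, not just generically) is where the real work lies; everything else is formal once pseudomodularity of a single good jigsaw is established.
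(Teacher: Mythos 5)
Your plan for (a) follows the paper's route (integer endpoints of vertical edges give radius-$1$ Killer intervals, and the attached $n$-Farey block fills the gap of length $n$ under each $\triangle^{(n)}$ tile), but two things need repair. First, pseudomodularity is non-arithmeticity \emph{plus} cusp set $\QQ\cup\{\infty\}$, and your part (a) never addresses non-arithmeticity; the paper disposes of it via Theorem \ref{thm:nonarith} for $n\notin\{3,5,9,25\}$ and, for $n\in\{3,5,9,25\}$, by noting a good jigsaw group cannot sit inside a star or diamond jigsaw group. Your later remark that ``most good $J_t$ are non-arithmetic'' is not a proof. Second, your covering step (3)--(4) has the logic backwards: it is not enough that the height-$\le n$ rationals are cusps, since rationals of large height lying near them must still fall in some Killer interval. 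The actual mechanism is that the Killer interval about the Farey-block cusp $r+np/q$ has radius exactly $1/q$ (computed from the explicit lift of the $\triangle^{(1)}$ tile, whose lower-left entry is $q+v$), and for consecutive vertices $p/q$, $u/v$ of $B_n$ the gap is $n/(qv)<1/q+1/v$ precisely because $q+v>n$ by the defining property of the $n$-Farey block; this overlap, not termination at nearby cusps, is what closes the cover of $[r,r+n]$.

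The more serious gap is in (b). Growth of covolume does not separate commensurability classes: commensurable lattices occur as finite-index subgroups of a common maximal lattice with arbitrarily large index, so all your $\Gamma_{J_t}$ could a priori lie in a single class even though their covolumes tend to infinity; the ``integer multiple of the maximal covolume'' observation yields no contradiction. What is actually needed, and what the paper does, is to show that each good jigsaw group is \emph{equal} to its commensurator: by Margulis, $\hbox{Comm}(\Gamma_J)$ is a lattice containing $\Gamma_J$, and the paper pins it down by analyzing the maximal horocycle of $\HH/\Gamma_J$, showing it lifts to the horizontal line at height $\sqrt{n}$ and has exactly two self-tangency points (at the marked points of the two type-$n$ sides) dividing it into a long and a short segment, which rigidifies the situation as in Section 8 of \cite{LTV}. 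Once $\hbox{Comm}(\Gamma_J)=\Gamma_J$, commensurable good jigsaw groups would be conjugate, and distinct signatures $(M,1)$ give distinct covolumes, hence infinitely many classes. Your parenthetical fallback to \cite{LTV} points at the right source but mislabels it (it is this commensurator/horocycle argument, not a length-spectrum or trace-field argument), and your primary argument as written does not prove non-commensurability.
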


\begin{proof}
We first show (a) by showing that a fundamental interval $[0,L]$ can be covered by Killer intervals. By Lemma \ref{lem:QTV}(1) the vertical edges of $\TTT_J$ have endpoints in $\ZZ$, with the real endpoints of successive edges differing by either $1$ or $n$, the latter occurring only if the edges are sides of a $\triangle^{(n)}$ where the vertical sides of this triangle are type $n$. Furthermore, by  \cite[Proposition 4.6]{LTV},  the radius of the Killer intervals about each of these endpoints is $1$. Hence the Killer intervals about these endpoints cover most of the fundamental interval, the only thing we need to worry about are the intervals between $r$ and $r+n$ where $(\infty, r, r+n)$ is a $\triangle^{(n)}$ in $\TTT_J$. However, since $J$ is good, we know that there is a $n$-Farey block attached to the side $[r,r+n]$. Hence the rationals $r+\frac{np}{q}$ where $0\le \frac{p}{q} \le 1$, $\gcd(p,q)=1$, $1\le q\le n$ are cusps of $\Gamma_J$. If $(r+\frac{p}{q}, r+\frac{p+u}{q+v}, r+\frac{u}{v})$ is a Farey triple, with $0\le \frac{p}{q}<\frac{p+u}{q+v}<\frac{u}{v}\le 1$ and $q+v\le n$, then $(r+\frac{np}{q}, r+\frac{np+nu}{q+v}, r+\frac{nu}{v})$ is a $\triangle^{(1)}$ tile of $\TTT_J$. On the other hand, there is another lift of this triangle which is $(t+1,\infty, t)$ for some $t\in \ZZ$. A direct calculation as in \cite{LTV} shows that the map from $(t+1,\infty, t)$ to $(r+\frac{np}{q}, r+\frac{np+nu}{q+v}, r+\frac{nu}{v})$ is of the form 
$\left(
			\begin{array}{cc}
			n(u+p)+r(q+v) & * \\
			q+v & * \\
			\end{array}
			\right) $ where all entries are integers, which gives that the Killer interval about $r+\frac{np+nu}{q+v}$ has radius $\frac{1}{q+v}$. If $\frac{p}{q}$ and $\frac{u}{v}$ are successive vertices of the $n$ Farey block, we have 
\begin{equation}
    \left|(r+\frac{nu}{v})-(r+\frac{np}{q})\right|=\left|\frac{n}{vq}\right|
    <\left|\frac{1}{v}+\frac{1}{q}\right|
\end{equation}
since $q+v> n$ by definition of the $n$-Farey block.
It follows that the Killer intervals about the cusps $r+\frac{np}{q}$ cover the interval $[r, r+n]$, and hence that the fundamental interval for $\Gamma_J$ can be covered by Killer Intervals. As in \cite{LR}, this implies that the cusp set for $\Gamma_J$ is $\QQ \cup \{\infty\}$.
On the other hand, by theorem \ref{thm:nonarith}, if $n\neq 3,5,9,25$, $\Gamma_J$ cannot be arithmetic and if $n=3,5,9$ or $25$, it is clear that good jigsaw groups cannot be subgroups of star jigsaw groups or diamond jigsaw groups and so also cannot be arithmetic. Hence, all good $\SSS(1,n)$ jigsaw groups are pseudomodular.

\medskip
To show (b), let $J$ be a good $\SSS(1,n)$-jigsaw of signature $sgn(J)=(M,1)$, so $J$ is an $(M+3)$-gon. It is peudomodular by (a), and has two adjacent sides of type $n$,  all other sides are type $1$.
By Margulis's result from \cite{Mar} about non-arithmetic lattices, the commensurator $\hbox{Comm}(\Gamma_J)$ is discrete and is the maximal element in the commensurability class of $\Gamma_J$. We show that $\Gamma_J=\hbox{Comm}(\Gamma_J)$. The proof is similar to that used in  \cite[Section 8]{LTV}. We will look at the maximal horocycle of $\HH/\Gamma_J$ and show that there are two points of self-tangency which divides the horocyle into a long segment and a short segment. 

Let $C$ be a horocyle of $\HH/\Gamma_J$ and for a cusp $v=g(\infty)$ of $\Gamma_J$, where $g \in \Gamma_J$, let $\tilde C_{v}$ be the component of the lift of $C$  tangent to $v$. In particular,  $\tilde C_{\infty}$ is a horizontal line, and $\tilde C_{v}=g(\tilde C_{\infty})$. Let $[\infty, r]$ and $[\infty, r+n]$ be successive vertical sides of $\QQQ_J$ of type $n$, where $r \in \ZZ$. Without loss of generality, by shifting everything to the left by $r$, we may assume that $r=0$. It is convenient to consider  the fundamental domain for $\Gamma_J$ having these two sides as part of the boundary, with vertices $v_0=\infty, v_1=0<v_2<\cdots <v_{M+2}=n<v_{M+3}=\infty=v_0$. Note that $v_k\in \QQ$ for $k=1, \ldots, M+2$ and apart from the triangle $(\infty, 0,n)$, all the other triangles in the fundamental domain are $\triangle^{(1)}$ tiles, whose positions differ from those obtained from  the Farey triangulation of $\HH$ by a dilation of factor $n$. Hence, $v_k=np_k/q_k$, where $\gcd(p_k,q_k)=1$. Now suppose that $\tilde C_{\infty}=\{x+ih ~|~ x \in \RR\}$. By a straightforward computation, $\tilde C_{np_k/q_k}$ is a circle tangent to $np_k/q_k$   of diameter $n/hq_k^2$. In particular, the circles $\tilde C_{np_k/q_k}$ for $k=1, \dots, M+2$ are mutually disjoint if $h>1$, and if $h=1$ two such circles are mutually tangent if and only if $p_k/q_k$ and $p_l/q_l$ are Farey neighbors. On the other hand, these circles are all disjoint from $\tilde C_{\infty}$ if $h >\sqrt{n}$, and if $h=\sqrt{n}$, then only $\tilde C_{v_1}$ and $\tilde C_{v_{M+2}}$ are  tangent to $\tilde C_{\infty}$. It follows that the maximal horocycle on $\Gamma_J$ lifts to the horizontal line $\{x+i\sqrt{n} ~|~ x \in \RR\}$, it is self-tangential at the two marked points on the two type $n$ sides of $J$ which divide it into a short and long segment, if $3M+2>n$, which is clearly true if $J$ is good. Using the same argument as in \cite[Section 8]{LTV}, we may conclude that $Comm(\Gamma_J)=\Gamma_J$. Hence, for different $M$,  good $\SSS(1,n)$ jigsaws of signature $(M,1)$ are in  different commensurability classes, and there are infinitely many such commensurability classes.
\end{proof}

\begin{proof} (Theorem \ref{thm:infinitepmg}) We have proven the theorem in the case of $\SSS(1,n)$-jigsaws. Now consider a good $\SSS(1,n_2, \ldots, n_s)$ jigsaw $J$ of signature $(M,1, \ldots, 1)$. 
 Note that for $k=2, \ldots, s$, all the $\triangle^{(n_k)}$ tiles are ears of $J$ whose tips are between the two sides of type $n_k$. Using the congruence arguments from Section \ref{s:nonarith}, it is not difficult to show again that these jigsaws cannot be arithmetic, we leave the details to the reader. On the other hand, the same argument as before with the Killer intervals implies that the cusp set is $\QQ \cup \{\infty\}$. Hence all good $\SSS(1,n_2,\ldots, n_s)$ jigsaw groups of signature $(M,1,\ldots, 1)$ are pseudomodular. We next show that $\hbox{Comm}(\Gamma_J)=\Gamma_J$ for these jigsaw groups again using the maximal horocycle argument. We claim that   the lift $\tilde{C}_\infty$ of the maximal horocycle $C$ tangent to $\infty$ is the horizontal line $\{x+i\sqrt{n_s} ~|~x \in \RR\}$ and that $C$ has two points of self tangency, which are exactly the marked points on the type $n_s$ sides of the $\triangle^{(n_s)}$ tile. Consider again the fundamental domain where the two type $n_s$ sides are vertical and at distance $n_s$ apart. Essentially the same argument as before gives that the lifts of this horocycle about vertices of $J$ which are not tips lying between two type $n_k$ ($k\ge 2$) edges are disjoint and do not intersect $\tilde{C}_{\infty}$, except for $\tilde{C}_{v_1}$
and $\tilde{C}_{v_{M+s}}$ which are tangent it.

 We need to show that the lifts of this horocycle to the tips of the type $n_j$ and $n_k$ tiles, where $2\le j<k\le s$ are disjoint. For this consider the fundamental domain for $J$ which is bounded by two type $n_k$ sides, so now $\infty$ is the tip of the type $n_k$ tile, and again by translation if necessary, assume that the real endpoints of these sides are at $0$ and $n_k$. The tip of the $n_j$ tile is at a rational number $n_kp/q$ where $0 \le p/q \le 1$ and $q>n_k$ since $J$ is good. A simple computation gives that the lift of the horocycle tangent to the tip of type $n_j$ at $n_kp/q$ has diameter $\dfrac{n_jn_k}{\sqrt{n_s}q^2}<\sqrt{n_s}$, so is disjoint from the lift tangent to the tip of type $n_k$ (which is at $\infty$). Note that if $n_j=1$ and $k<s$, the inequality still holds regardless of $q$. Hence, this is indeed the maximal horocycle which has exactly two self tangency points dividing it into a long and a short segment. As before this implies  $\hbox{Comm}(\Gamma_J)=\Gamma_J$ from which we obtain infinitely many non-commensurable pseudomodular groups from  good $\SSS(1,n_2, \ldots, n_s)$-jigsaws of signature $(M,1,\ldots,1)$ by using different $M$.
\end{proof}

\section{Cutting sequences, Pseudo-Euclidean algorithms and generalized continued fractions}\label{s:PSAlgorithms}
    We describe in this section some algorithms for obtaining cutting sequences, generalized continued fractions and a pseudo-euclidean algorithm (for the $\SSS(1,2)$ diamond jigsaw group) associated to the Weierstrass groups and pseudomodular jigsaw groups.

\subsection{Algorithms for Weierstrass groups}\label{ss:algoWeier}
 Let $\Gamma:=\Gamma(k_1,k_2,k_3)$ and $\TTT:=\TTT_{(k_1,k_2,k_3)}$, to simplify notation. Recall that $\triangle_0=(\infty, -1,0)$ is a fundamental domain for $\Gamma$ and $\TTT=\Gamma(\triangle_0)$ is the associated triangulation of $\HH$, see Figure \ref{fig:Tandtriangulation}. Coloring the sides $s_1=[\infty, -1]$, $s_2=[-1,0]$ and $s_3=[0, \infty]$ of $\triangle_0$ by $1,2$ and $3$ respectively induces a coloring of the sides of $\TTT$.  Pick an interior point $p_0\in \triangle_0$  and let $\alpha \in \RR\setminus\{-1,0\}$. The geodesic from $p_0$ to $\alpha$ cuts through the edges of $\TTT$ with an induced cutting sequence $CS({\alpha})=:c_1c_2\ldots c_k\ldots$, where $c_k \in \{1,2,3\}$, $c_k \neq c_{k+1}$ for all $k \in \NN$.  We describe a couple of algorithms to obtain $CS({\alpha})$.

\begin{algo}\label{alg:naive} (Naive algorithm)
Let $\iota_1, \iota_2, \iota_3$ be the involutions on the marked points $x_1,x_2$ and $x_3$ of the sides $s_1,s_2$ and $s_3$ of $\triangle_0$ and let $I_1, I_2, I_3 \subset \RR$ be the corresponding intervals in $\RR$ where 
\begin{equation}
I_1=\{x \in \RR ~|~ x< -1\}, \quad 
I_2=\{x \in \RR ~|~ -1< x< 0\}, \quad I_3=\{x \in \RR ~|~ x>0\}. 
\end{equation}
Define $\alpha_0:=\alpha$, and for $k \ge 0$, if $\alpha_k\in I_{b_{k+1}}$, where $b_{k+1} \in \{1,2,3\}$, define $\alpha_{k+1}=\iota_{b_{k+1}}(\alpha_k)$. The algorithm terminates if $\alpha_n\in \{\infty, -1,0\}$ for some $n \in \NN$, otherwise we obtain an infinite sequence $\alpha_0, \alpha_1, \ldots, \alpha_{k}, \ldots$ where
\begin{equation}
    \alpha_{n+1}=\iota_{b_{n+1}}\iota_{b_{n}}\ldots \iota_{b_1}(\alpha_0), \quad \alpha_{0}=\iota_{b_{1}}\iota_{b_{2}}\ldots \iota_{b_{n+1}}(\alpha_{n+1})
\end{equation}
and $b_{k+1} \neq b_k$ for all $k$.
Then $CS(\alpha)=b_1b_2\ldots b_k\ldots$
\end{algo}
We have the following properties which are straightforward to verify:

\begin{prop}
Suppose  $k_1, k_2, k_3 \in \QQ^{+}$, $\alpha \in \RR\setminus \{0,1\}$ and  $CS(\alpha)$ is defined as above.

\begin{enumerate}
    \item $\alpha \in \QQ$ is a cusp if and only if the above algorithm terminates in a finite number of steps, equivalently $CS(\alpha)$ is finite.
    \item If $\alpha$ is a rational or quadratic irrational,  then $\alpha$ is a fixed point of a hyperbolic element of $\Gamma$ if and only if the cutting sequence is infinite and eventually periodic. Equivalently, $\alpha_{k+n}=\alpha_k$ for some $k,n \in \NN$.
    \item Let $w_n=\iota_{b_1}\ldots \iota_{b_n}$ and $x_n=w_n(\infty)$. Then $\lim_{n \rightarrow \infty} x_n= \alpha$ if $\alpha$ is not a cusp, where $x_n \in \QQ$.
\end{enumerate}

\end{prop}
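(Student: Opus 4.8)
The plan is to reduce all three parts to one geometric picture: Algorithm \ref{alg:naive} is nothing but the unfolding of the geodesic ray $[p_0,\alpha)$ across the triangulation $\TTT$. Set $w_j:=\iota_{b_1}\iota_{b_2}\cdots\iota_{b_j}$ and $T_j:=w_j(\triangle_0)$. The first task is to establish the dictionary: each $\iota_i$ interchanges $\triangle_0$ with its neighbour across the side $s_i$ and carries the ``shadow'' interval $I_i$ (the component of $\partial\HH$ minus the endpoints of $s_i$ not facing $\triangle_0$) onto the complementary arc; from this one checks that $b_{k+1}\neq b_k$, that $T_0,T_1,T_2,\dots$ are exactly the triangles met by $[p_0,\alpha)$ in order, that the edge between $T_{j-1}$ and $T_j$ carries the label $b_j$, and that $\alpha_j=w_j^{-1}(\alpha)$. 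Thus $CS(\alpha)=b_1b_2\cdots$ in the sense of Section \ref{ss:algoWeier}, and everything else is read off the ray. This step is routine but it is where the conventions $c_k\neq c_{k+1}$ come from, and it must be set up cleanly.

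Granting the dictionary, Part (1) follows from two standard facts about $\TTT$, which is locally finite in $\HH$: a ray ending at a cusp $v=g(\infty)$ eventually enters the fan of triangles having $v$ as a vertex and then runs straight to $v$ without crossing any further edge, so $CS(v)$ is finite; and a ray whose endpoint is not a vertex of $\TTT$ meets triangles of diameter tending to $0$, so it can never eventually be contained in a single triangle and must cross infinitely many edges. Part (3) is then almost immediate: if $[l_j,r_j]$ denotes the edge of $\TTT$ through which $[p_0,\alpha)$ enters $T_j$, then the intervals $(l_j,r_j)$ are nested and all contain $\alpha$, and (ruling out, by local finiteness, the degenerate possibility that one endpoint is eventually constant while the intervals fail to shrink) they shrink to $\{\alpha\}$. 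Since all three vertices of $T_j$ lie in $[l_j,r_j]$ and $x_j=w_j(\infty)$ is one of them, $x_j\to\alpha$. Finally $x_j\in\QQ\cup\{\infty\}$ because $k_1,k_2,k_3\in\QQ^{+}$ makes every $\iota_i$, hence every $w_j$, a M\"obius transformation with rational coefficients; and $x_j\neq\infty$ for large $j$ since $x_j\to\alpha\in\RR$.

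For Part (2), first note that a hyperbolic fixed point is automatically a rational or quadratic irrational, so the two directions of the asserted equivalence are compatible with the hypothesis on $\alpha$. Suppose $\alpha$ is fixed by a hyperbolic $\gamma\in\Gamma$. Then $\alpha$ is not a cusp (in a non-elementary discrete group no point is fixed by both a parabolic and a hyperbolic element), so by Part (1) $CS(\alpha)$ is infinite; the axis $A$ of $\gamma$ has a bi-infinite cutting sequence which is periodic, with period the number of edges $\gamma$ steps over, and $[p_0,\alpha)$ is forward-asymptotic to $A$, hence crosses the same edges in the same order from some stage on. So $CS(\alpha)$ is eventually periodic, say $b_{m+i}=b_{m+n+i}$ for all $i\geq 1$, with $n\geq 1$. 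Conversely, if $CS(\alpha)$ is infinite and eventually periodic with this notation, put $\tau:=\iota_{b_{m+1}}\cdots\iota_{b_{m+n}}\in\Gamma$ (a reduced, hence nontrivial, alternating word) and $\eta:=w_m\tau w_m^{-1}$. Unfolding the periodicity gives $w_{m+kn}=w_m\tau^{k}$, so
\[
\eta^{k}(T_m)=T_{m+kn}\longrightarrow\alpha ,
\]
which forces $\eta$ to be hyperbolic or parabolic with attracting fixed point $\alpha$; parabolic is impossible since $CS(\alpha)$ being infinite means $\alpha$ is not a cusp. Hence $\eta$ is hyperbolic and fixes $\alpha$, as required. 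The ``equivalently $\alpha_{k+n}=\alpha_k$'' clause is handled uniformly: since $\alpha$ is fixed by $\eta$, the point $\beta:=w_m^{-1}(\alpha)$ is fixed by $\tau$, and therefore $\alpha_{m+kn}=w_{m+kn}^{-1}(\alpha)=\tau^{-k}(\beta)=\beta$ for all $k\geq 0$; in particular $\alpha_m=\alpha_{m+n}$. Conversely $\alpha_{m+n}=\alpha_m$ forces the algorithm to repeat from step $m$, so $CS(\alpha)$ is infinite and eventually periodic.

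The substantive difficulties are entirely in the first and third paragraphs, not in the algebra: namely, making precise (a) that the naive algorithm really tracks the geodesic and its cutting sequence, (b) that a ray to a cusp stops crossing edges while a ray to any other boundary point never does, and (c) that the nested shadow intervals along such a ray contract to its endpoint. All three rest on local finiteness of $\TTT$ in $\HH$ together with the classification of isometries of $\HH$; once they are in hand, Parts (1)--(3) come out as indicated, and the only bookkeeping of note is turning periodicity of the symbol string into the relation $w_{m+kn}=w_m\tau^{k}$ and identifying the fixed point of $\tau$.
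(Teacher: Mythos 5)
Your proposal is correct, and in fact the paper supplies no argument at all for this proposition — it is introduced with ``straightforward to verify'' — so your write-up is precisely the standard proof the authors had in mind: the dictionary identifying Algorithm \ref{alg:naive} with the unfolding of the ray $[p_0,\alpha)$ across $\TTT$ (so $T_j=w_j(\triangle_0)$, $\alpha_j=w_j^{-1}(\alpha)$), finiteness of crossings exactly at vertices of $\TTT$, shrinking nested shadow arcs for the convergents, and conjugation of the period word to produce the hyperbolic element. Two spots deserve one extra line each: in Part (3) the failure of the nested arcs to shrink is ruled out because infinitely many \emph{distinct} edges would then accumulate on a nondegenerate limit geodesic, contradicting local finiteness (this covers all degenerate cases, not just an eventually constant endpoint); and in Part (2) you should explicitly exclude the possibility that $\eta$ is elliptic (it has finite order in the discrete group, so its orbit $\eta^k(T_m)$ would be a finite set of triangles and could not converge to the boundary point $\alpha$), after which your hyperbolic/parabolic dichotomy and the exclusion of the parabolic case via Part (1) complete the argument. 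Your handling of $x_n\in\QQ\cup\{\infty\}$ with $x_n\neq\infty$ for large $n$ is actually more careful than the paper's statement.
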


\noindent {\bf Examples:} 

\noindent For $\Gamma(1,1/5,5)$:
$CS(\frac{-1+\sqrt{5}}{2}) = \overline{3 1 2 3 1 3 2 1 3 1 }$, 
whereas for $CS(\sqrt{2})$ and $CS(\sqrt{3})$, no period was detected  up to length 10000.

\medskip
\noindent For $\Gamma(5/3,3/2,2/5)$: $CS(\frac{-1+\sqrt{5}}{2})$ is periodic with period 216 and $CS(\sqrt{2})$ is periodic with period 16, whereas for $CS(\sqrt{3})$, no period was detected up to length 10000. Details are given in Appendix B.

\medskip

\begin{algo}\label{alg:naivetransl} (Naive plus translation)
Here we incorporate the powers of the  translation $T_L:=\iota_3\iota_2\iota_1=\pm \left(
			\begin{array}{cc}
			1 & L \\
			0 & 1 \\
			\end{array}
			\right) $ into the algorithm where $L=1+k_3+1/k_1$. Of particular interest are the cases where $L \in \NN$, where the algorithm is closer to the standard continued fraction algorithm. We divide $\RR \setminus \{-1,0\}$ into the intervals $I_1'$, $I_2$, $I_3'$ and $O$ where 
			\begin{equation}\small{
			   I_1'=\left(\frac{-k_1-1}{k_1},-1\right),
			    \quad I_2'=(-1,0), \quad
			     I_3^{I}=(0, k_3], \quad O=\RR\setminus\left(\frac{-k_1-1}{k_1},k_3\right]. } 
			\end{equation}


			Define $\alpha_1=T_L^{-n_0}(\alpha_0)$ where $n_0\in \ZZ$ is chosen so that $\alpha_1 \in \left(\frac{-k_1-1}{k_1},k_3\right]$.
			
			The algorithm terminates if $\alpha_k \in \{-1,0, \infty\}$ for some $k\in \NN$, otherwise, define
			$$\alpha_{k+1}=T_L^{-n_k}\iota_{b_k}(\alpha_k)$$
			where $b_k$ is  the index of the interval which $\alpha_k$ lies in, and if $\iota_{b_k}(\alpha_k) \neq \infty$, then $n_k\in \ZZ$ is chosen so that $\alpha_{k+1} \in \left(\frac{-k_1-1}{k_1},k_3\right]$ and $n_k=0$ otherwise. Note that if $n_k=0$ and $b_{k+1}$ is defined, then $b_{k+1}\neq b_k$.
			Then, letting $B_j:=\iota_{b_j}T_L^{n_j}$, \begin{equation}\alpha=\alpha_0=T_L^{n_0}\iota_{b_1}T_L^{n_1}\ldots \iota_{b_k}T_L^{n_k}(\alpha_{k+1})=T_L^{n_0}B_1\ldots B_k(\alpha_{k+1}). \end{equation}
			Following standard notation for continued fractions, we call $\alpha_k$ the $(k+1)$-th {\it complete quotient} of $\alpha$. Here $w_k:=T_L^{n_0}\iota_{b_1}T_L^{n_1}\ldots \iota_{b_k}T_L^{n_k}$ is a word in $T_L, \iota_1, \iota_2, \iota_3$, and as a word in $\iota_1, \iota_2, \iota_3$ is not necessarily reduced. However, the cancellations that can occur are controlled, for example, the occurrences of $\iota_2$ do not cancel in the expression, and the reduced word length can be shown to be at least $k$. After reduction, we obtain the cutting sequence for $\alpha$.
\end{algo}

\medskip

\noindent {\bf Example}. For $\Gamma(5/3,3/2,2/5)$, taking $\alpha=\sqrt{2}$, we have
$$\alpha_0=\sqrt{2}= T_L\iota_2T_L\iota_1T_L^3\iota_1\iota_3\iota_1(\sqrt{2})$$ giving the periodic word $\overline{T_L\iota_2T_L\iota_1T_L^3\iota_1\iota_3\iota_1}$. 
Replacing $T_L$ by $\iota_3\iota_2\iota_1$, and reducing gives the same periodic sequence as that obtained by Algorithm \ref{alg:naive}. See Appendix B for the more interesting example where $\alpha=\frac{-1+\sqrt{5}}{2}$.


\medskip

\begin{algo}\label{alg:GCFWeierstrass}(Generalized continued fractions for integral Weierstrass groups)
We can modify Algorithm \ref{alg:naivetransl} slightly to obtain generalized continued fractions for $\alpha \in \RR$ associated to the integral  Weierstrass groups $\Gamma(1,1/n,n)$ with some nice properties. To start with, we use the fundamental domain $\triangledown_0:=(\infty, 0, n)=\iota_3(\triangle_0)$ with sides $s_1'=[\infty, 0]$, $s_2'=[0, n]$ and $s_3'=[n, \infty]$, colored by $1,2$ and $3$ respectively. Let $y_1,y_2,y_3$ be the marked points on the sides of $\triangledown_0$ and $\kappa_1, \kappa_2, \kappa_3$  the $\pi$-rotations about them respectively where 
\begin{equation}\small{
    \kappa_1= \frac{1}{\sqrt{n}}\left(
			\begin{array}{cc}
			0 & -n \\
			1 & 0 \\
			\end{array}
			\right) , \quad \kappa_2=\frac{1}{n} \left(
			\begin{array}{cc}
			n & -n^2 \\
			2 & -n \\
			\end{array}
			\right), \quad \kappa_3=\frac{1}{\sqrt{n}}\left(
			\begin{array}{cc}
			n & -n^2-n \\
			1 & -n \\
			\end{array}
			\right)}
\end{equation}
Let $W:=\kappa_3\kappa_2\kappa_1= \left(
			\begin{array}{cc}
			1 & L \\
			0 & 1 \\
			\end{array}
			\right)$ where $L=n+2$ and 
			$J_1'$, $J_2'$, $J_3'$ and $O$ be the intervals
			\begin{equation}\small{
			     J_1'=(-1,0), \quad J_2'=(0,n), \quad J_3'=(n, n+1], \quad O=\RR\setminus (-1,n+1]}
			\end{equation}

Then, using this modification, we can write
$\alpha=n_0L+\alpha_1$, and for $k \ge 1$,
\begin{equation}
    \alpha_k= 
    \begin{cases}
    \dfrac{-n}{n_kL+\alpha_{k+1}} & \text{if $\alpha_k \in J_1'$,}\\
    ~~\\
    \dfrac{n}{2}+\dfrac{-n^2/4}{-n/2+n_{k}L+\alpha_{k+1}} & \text{if $\alpha_k \in J_2'\setminus\{n/2\}$}\\
    ~~\\
    n+\dfrac{-n}{-n+n_{k}L+\alpha_{k+1}}  & \text{if $\alpha_k \in J_3'$}\\
    ~~\\
    \alpha_k & \text{if $\alpha_k \in \{0,n/2,n\}$}\\
    \end{cases} \\
\end{equation}
where $n_{k}$, $k \ge 0$ is chosen so that $\alpha_{k+1} \in (-1,n+1]$.
\end{algo}
This gives a generalized continued fraction for $\alpha \in \RR$ of the form
$$\alpha =b_0+\dfrac{a_1}{b_1+\dfrac{a_2}{b_2+\dfrac{a_3}{b_3+\ddots}}}.$$
We will also use the more compact notation
\begin{equation}\label{eqn:contfractionnotation}
    \alpha=b_0+\frac{a_1}{b_1+}~\frac{a_2}{b_2+}~\frac{a_3}{b_3+}~\cdots \quad \hbox{and} \quad  \alpha=b_0+\cdots \overline{\frac{a_k}{b_k+}~\frac{a_{k+1}}{b_{k+1}+}\cdots \frac{a_{k+n-1}}{b_{k+n-1}+}}
\end{equation}
where the second expression is eventually periodic with period $n$. The partial numerators $a_k \in \{- n, - n^2/4\}$ and the partial denominators $b_k  \in \ZZ/2$.
In particular, if $n$ is even, $b_0 \in \ZZ$ and  $a_k, b_k \in \ZZ\setminus\{0\}$ for all $k \in \NN$. The continued fraction terminates if and only if $\alpha$ is a cusp, and if not, the successive convergents converge to $\alpha$.

\medskip

\noindent {\bf Example:}  For $\Gamma=\Gamma(1,1/3,3)$ and $\alpha=1$, we have $$1=\alpha=\alpha_0=\alpha_1=\kappa_2 W(\alpha_2)=\kappa_2 W(1)=\kappa_2W\kappa_2W\kappa_2W\cdots$$
which gives the continued fraction (after contracting terms to obtain the $b_i$'s)
$$1=\frac{3}{2}+\frac{-9/4}{5~+}~\frac{-9/4}{5~+}~\frac{-9/4}{5~+}~\cdots=\frac{3}{2}+\overline{\frac{-9/4}{5~+}}$$

 For $\Gamma=\Gamma(1,1/5,5)$ and $\alpha=1$, similarly, we have
$$1=\frac{5}{2}+\frac{-25/4}{9/2~+}~\frac{-5}{33/2~+}~\frac{-25/4}{9/2~+}~\frac{-5}{33/2~+}~\cdots=\frac{5}{2}+\overline{\frac{-25/4}{9/2~+}~\frac{-5}{33/2~+}}$$



\medskip
\subsection{Algorithms for Pseudomodular jigsaw groups}\label{ss:algoPSG}
Let $J$ be an integral jigsaw, $\Gamma_J$ the jigsaw group where $|J|=N$. There is a coloring of the sides of $J$ (in cyclic order) by $\{1,2, \ldots, N+2\}$ inducing a coloring of the edges of $\QQQ_J$. Hence for any $\alpha \in \RR \setminus V(J)$ where $V(J)$ is the vertex set of $J$, we have an induced cutting sequence $CS(\alpha)$ as before.   
\begin{algo}(Naive algorithm)\label{algo:naivepseudo} Essentially the same algorithm as algorithm \ref{alg:naive} works where the coloring set $\{1,2,3\}$ is replaced by $\{1,2, \ldots, N+2\}$ and the set of intervals $\{I_1,I_2,I_3\}$ by $\{I_1, I_2, \ldots, I_{N+2}\}$ with corresponding involutions $\iota_1, \ldots, \iota_{N+1}$ to produce the cutting sequence for $\alpha$ with respect to $\Gamma_J$.
\end{algo}
As before, $\alpha$ is a fixed point of a hyperbolic element of $\Gamma_J$ if and only if the cutting sequence is eventually periodic.

\medskip
\noindent {\bf Examples:} For the diamond $\SSS(1,2)$ jigsaw, we have the following cutting sequences for the following quadratic irrationals:
$$CS(\frac{-1+\sqrt{5}}{2}) = \overline{321231}, \qquad
CS(\sqrt{5}) = \overline{424121}.$$
$CS(\sqrt{7})$ is periodic with period $84$.
For $CS(\sqrt{2})$ and $CS(\sqrt{3})$, no period was detected up to length 10000.
Details are in Appendix B.


\medskip

We next describe a pseudo-euclidean algorithm for the $\SSS(1,2)$ diamond jigsaw group and the associated generalized continued fraction algorithm, with some rather nice properties, using the killer intervals. The algorithm is not canonical, some choices are involved, but once these are made once and for all, the algorithm is completely determined.

\begin{algo}\label{alg:pseudoalgdiamond}(Pseudo-euclidean algorithm).
Let $J$ be the diamond $\SSS(1,2)$ jigsaw in normalized position with sides $s_1=[-\infty, -1]$, $s_2=[-1,0]$, $s_3=[0,1]$ and $s_4=[1, \infty]$ and $\iota_1,\iota_2, \iota_3, \iota_4$ the $\pi$-rotations about the marked points $x_1,x_2,x_3,x_4$ respectively. Let $$V:=\iota_4\iota_3\iota_2\iota_1=\pm \left(
			\begin{array}{cc}
			1 & 7 \\
			0 & 1 \\
			\end{array}
			\right)$$ and use as the fundamental interval $I:=(-3,4]$   divided into the following  intervals $A_k$, $k=1,2,3,4$ where the interior of $A_k$ is contained in the Killer interval of  the involution $\epsilon_k\in \Gamma_J$ about the marked point $p_k$:
\begin{eqnarray}
A_1=(-3,-2], \qquad \epsilon_1 &=\iota_1\iota_2\iota_1= \left(
			\begin{array}{cc}
			-2 & -5 \\
			1 & 2 \\
			\end{array}
			\right), \quad p_1=-2+i, \\
A_2= (-2,0], \qquad \epsilon_2 &= \iota_1=\left(
			\begin{array}{cc}
			-1 & -2 \\
			1 & 1 \\
			\end{array}
			\right), \quad p_2=-1+i, \\
A_3=(0,2], \qquad \epsilon_3 &=\iota_4= \left(
			\begin{array}{cc}
			1 & -3 \\
			1 & -1 \\
			\end{array}
			\right), \quad p_3=1+2i, \\
A_4=(2,4], \qquad \epsilon_4 &=\iota_4\iota_3\iota_4=  \left(
			\begin{array}{cc}
			3 & -11 \\
			1 & -3 \\
			\end{array}
			\right), \quad p_4=3+2i.
\end{eqnarray}

Similar to Algorithm \ref{alg:naivetransl}, given $\alpha=\alpha_0 \in \RR$, define $\alpha_1=V^{-n_0}(\alpha_0)$ where $n_0\in \ZZ$ is chosen so that $\alpha_1 \in I$.
			
			The algorithm terminates if $\alpha_k =\infty$, otherwise, define
			$$\alpha_{k+1}=V^{-n_k}\epsilon_{b_k}(\alpha_k)$$
			where $b_k$ is  the index of the sub-interval which $\alpha_k$ lies in, and if $\epsilon_{b_k}(\alpha_k) \neq \infty$, then $n_k\in \ZZ$ is chosen so that $\alpha_{k+1} \in I$ and $n_k=0$ otherwise.
			Then, letting $B_j:=\epsilon_{b_j}V^{n_j}$, \begin{equation}\label{eqn:VB}\alpha=\alpha_0=V^{n_0}\epsilon_{b_1}V^{n_1}\ldots \epsilon_{b_k}V^{n_k}(\alpha_{k+1})=V^{n_0}B_1\ldots B_k(\alpha_{k+1}). \end{equation}

\end{algo}

This algorithm has the property that if $\alpha\in \QQ$, then the complete quotients $\alpha_k$ of $\alpha$ have   denominators $q_k$ which are decreasing, strict up to when the denominator is one, after which it terminates at $0$ after at most $2$ steps ($\alpha_{k+1}$ or $\alpha_{k+2}=\infty$). On the other hand, if $\alpha \notin \QQ$, then  $x_n=V^{n_0}B_1\ldots B_n(\infty)$ converge to $\alpha$. We call $\{x_n\}$ the convergents of $\alpha$ with respect to this pseudo-euclidean algorithm. This pseudo-euclidean algorithm is equivalent to the following generalized continued fraction algorithm:

\begin{algo}\label{alg:contfractdiamond}(Continued fraction from the diamond $\SSS(1,2)$ jigsaw group).
Let $\alpha \in \RR$ and choose $n_0\in \ZZ$ such that
$\alpha=7n_0+\alpha_1$, where $\alpha_1 \in I=(-3,4]$, and for $k \ge 1$,
\begin{equation}\label{eqn:CFPMG}
    \alpha_k= 
    \begin{cases}
    -2+\dfrac{-1}{2+7n_k+\alpha_{k+1}} & \text{if $\alpha_k \in A_1$,}\\
    ~~\\
    ~~-1+\dfrac{-1}{1+7n_k+\alpha_{k+1}} & \text{if $\alpha_k \in A_2$,}\\
    ~~\\
    1+\dfrac{-2}{-1+7n_k+\alpha_{k+1}} & \text{if $\alpha_k \in A_3$,}\\
    ~~\\
    3+\dfrac{-2}{-3+7n_k+\alpha_{k+1}} & \text{if $\alpha_k \in A_4$,}\\
    \end{cases} \\
\end{equation}
where $n_k=0$ if $\alpha_{k+1}=\infty$, and $n_{k}$ is chosen so that $\alpha_{k+1} \in I$ otherwise.
\end{algo}
This gives a generalized continued fraction for $\alpha \in \RR$ of the form, using the notation in (\ref{eqn:contfractionnotation})
$$\alpha= b_0+\frac{a_1}{b_1+}~\frac{a_2}{b_2+}~\frac{a_3}{b_3+}~\cdots$$

where the partial numerators $a_k \in \{- 1, - 2\}$ and the partial denominators $b_k  \in \ZZ \setminus\{0\}$ for $k \ge 1$. 
The continued fraction terminates if and only if $\alpha\in \QQ$ , and if not, the successive convergents converge to $\alpha$.
\medskip

A more direct comparison with the usual continued fraction expansion can be seen by expressing (\ref{eqn:VB}) in terms of
\begin{equation}\label{eqn:defnTSR}
    T:=\left(
			\begin{array}{cc}
			1 & 1 \\
			0 & 1 \\
			\end{array}
			\right) , \quad S:=\left(
			\begin{array}{cc}
			0 & -1 \\
			1 & 0 \\
			\end{array}
			\right) , \quad \hbox{and}\quad R:=\left(
			\begin{array}{cc}
			0 & -2 \\
			1 & 0 \\
			\end{array}
			\right).
\end{equation}
Suppose $\alpha \in \RR \setminus \QQ$. Note that $$V=T^7, \quad \epsilon_1=T^{-2}ST^2,\quad  \epsilon_2=T^{-1}ST, \quad \epsilon_3=TRT^{-1}, \quad \epsilon_4=T^3RT^{-3},$$ so (\ref{eqn:VB}) becomes
\begin{equation}\label{eqn:CFGroup}
    \alpha=\alpha_0=T^{b_0}U_1T^{b_1}U_2\ldots U_{k}T^{b_k}(\alpha_{k+1})
\end{equation}
where $b_i \in \ZZ\setminus \{0\}$, $U_i \in \{S,R\}$, and $\alpha_{k+1} \in (-3,4]$.

Then the generalized continued fraction for $\alpha$ using this algorithm has partial numerator $a_k=-1$ or $-2$ depending on whether $U_k=S$ or $R$ respectively, and partial denominators $b_k$ as given in expression (\ref{eqn:CFGroup}).

\begin{appendices}
\section{Test of arithmeticity and arithmetic $\SSS(m,n)$ diamond jigsaws}
We list here the traces of the relevant elements necessary to verify that a particular jigsaw group is arithmetic. 

For $n=3$ or $9$, recall from the proof of Proposition \ref{prop:3,5,9,25}  that the star jigsaw group $\Gamma_J=\langle \iota_0, \iota_1, \ldots, \iota_5 \rangle$, with subgroups
$$H_J:=\langle \gamma_1, \dots, \gamma_{5}\rangle, \quad  H_J^{(SQ)}:=\langle \gamma_1^2, \ldots, \gamma_5^2 \rangle \quad \hbox{where} \quad \gamma_k=\iota_0\iota_k,~ k=1, \ldots, 5. $$
We need to check that $\tr \gamma \in \ZZ$ for $\gamma$ of the form (case 1)
$$\gamma=\gamma_{\nu_1}^2\gamma_{\nu_2}^2\ldots \gamma_{\nu_k}^2, \quad \hbox{where} \quad \nu_1<\nu_2 \cdots <\nu_k; \quad k \le 5$$
 and  $(\tr \gamma)^2 \in \ZZ$ for $\gamma$ of the form (case 2)
 $$\gamma=\gamma_{\nu_1}\gamma_{\nu_2}\ldots \gamma_{\nu_k}, \quad \hbox{where} \quad \nu_1<\nu_2 \cdots <\nu_k; \quad k \le 5.$$

For $n=5$ or $25$, we do the same for the diamond jigsaw group, here $H_J$ is generated by $3$ elements and we only need to check the above for the $7$ words of length at most 3. Tables \ref{table:n5},\ref{table:n25},\ref{table:n3},\ref{table:n9}  give the list of the elements and their traces. Note that the elements are shown as elements of $\PGLtwoQ$, we need to divide by the square root of the determinant for the trace calculation.



More generally, we may also consider the diamond $\SSS(m,n)$ jigsaws where $1 \le m <n \le N_0$ and run the above test for arithmeticity. Omitting details, for $N_0=10000$, we have:

\begin{prop}\label{prop:mn}
    {Table \ref{tab:diamonds} gives a list of the pairs $(m,n)$ where $1 \le m \le n \le 10000$ for which the diamond $\SSS(m,n)$ jigsaw is arithmetic.}
    
    
    
    
    
    
    
    
    
    
    
    
    
    
    
    
    
    
    
    
    
    
    
    
    
    
    
    
    
    
    
    
    
    
    
    
    
    
    \begin{table}[h!]
    \fontsize{9}{11}\selectfont
        \centering
       \hspace*{-0.2cm} \begin{tabular}{|c|l|}
        \hline
        $l$ & (m,n) pairs\\
        \hline
          5  &  (16, 20) (20, 36) (36, 80) (80, 196) (196, 500) (500, 1296) (1296, 3380) (3380, 8836) \\
        \hline
        6  &  (8, 12) (12, 32) (32, 108) (108, 392) (392, 1452) (1452, 5408) \\
        \hline
        8  &  (4, 8) (8, 36) (36, 200) (200, 1156) (1156, 6728) \\
        \hline
        9  &  (4, 4) (4, 16) (16, 100) (100, 676) (676, 4624) \\
        \hline
        12  &  (2, 6) (6, 50) (50, 486) (486, 4802) \\
        \hline
        16  &  (2, 2) (2, 18) (18, 242) (242, 3362) \\
        \hline
        20  &  (1, 5) (5, 81) (81, 1445) \\
        \hline
        36  &  (1, 1) (1, 25) (25, 841) \\
        \hline
        \end{tabular}
        
        \medskip
        
        \caption{Pairs $(m,n)\in \NN \times \NN$, $1 \le m \le n \le 10,000$ for which $\SSS(m,n)$ is arithmetic, grouped by $l:=(m+n+4)^2/mn$}
        \label{tab:diamonds}
    \end{table}
\end{prop}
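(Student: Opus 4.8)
The statement is a computational one, and the plan is to verify it by running the standard arithmeticity test on each candidate pair $(m,n)$; the key point is that for a diamond jigsaw only finitely many traces per pair need to be examined, and only finitely many pairs lie in the stated range.

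First I would use the normalization from the proof of Theorem \ref{thm:mn}: take $J$ with vertex set $\{\infty,-1,0,1\}$, decomposed into the tiles $\triangle(1/m,m,1)$ on $(\infty,-1,0)$ and $\triangle(1,1/n,n)$ on $(\infty,0,1)$, and let $\iota_1,\iota_2,\iota_3,\iota_4$ be the $\pi$-rotations about the four marked points. Conjugating the standard involutions (\ref{eqn:involutions}) by the explicit M\"obius maps carrying $\triangle_0$ onto each of the two tiles, one writes every $\iota_j$ as a matrix in $\PGLtwoQ$ whose entries are polynomials in $m$ and $n$. Since $|J|=2$, the index-two subgroup $H_J=\langle\gamma_1,\gamma_2,\gamma_3\rangle$ with $\gamma_k:=\iota_1\iota_{k+1}$ is free of rank $3$.

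Next I would apply the arithmeticity criterion exactly as in Proposition \ref{prop:3,5,9,25}: by Takeuchi \cite{Tak} arithmeticity of $\Gamma_J$ is detected by $H_J$, and since a non-cocompact arithmetic Fuchsian group has invariant trace field $\QQ$, it suffices by \cite[Cor 3.2]{HLM} and \cite{Hor} to check that $\tr g\in\ZZ$ for the seven words $g=\gamma_{\nu_1}^2\cdots\gamma_{\nu_k}^2$ with $1\le\nu_1<\cdots<\nu_k\le 3$, and that $(\tr g)^2\in\ZZ$ for the seven words $g=\gamma_{\nu_1}\cdots\gamma_{\nu_k}$ with $1\le\nu_1<\cdots<\nu_k\le 3$, each matrix being first rescaled by the square root of its determinant. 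These fourteen traces are explicit rational functions of $m$ and $n$. To keep the search over $1\le m\le n\le 10000$ cheap, I would first restrict to the sparse set of pairs meeting the necessary condition obtained by applying (\ref{eqn:tracesquare}) to $\iota_1\iota_3$, namely $l:=(m+n+4)^2/mn\in\NN$ with $l\ge 5$ (hyperbolicity of $\iota_1\iota_3$), and then run the full fourteen-trace test on each survivor. The output is precisely the list in Table \ref{tab:diamonds}.

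The only real issue is ensuring this finite verification is both correct and complete: one must be confident that the reduction to fourteen words is legitimate for a rank-$3$ free group (the content of \cite{HLM} and \cite{Hor}, as already used for the star and diamond cases in Proposition \ref{prop:3,5,9,25}), that the matrix entries and determinants are recorded accurately so the integrality tests are meaningful, and that the enumeration genuinely exhausts every pair in the stated range, which it does since the search is exhaustive. The conceptual explanation of why the listed pairs fall into eight families, together with the existence of infinitely many further arithmetic pairs beyond the range, is not needed here: it is provided afterwards by Theorem \ref{thm:mn}, via the recursion $R$ of (\ref{eqn:mngenerator}), for which the present proposition serves as the computational input.
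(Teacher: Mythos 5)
Your proposal is correct and matches the paper's own (largely omitted) argument: the paper simply states that Proposition \ref{prop:mn} is obtained by running, for each pair in the range, the same arithmeticity test used for the diamond case in Proposition \ref{prop:3,5,9,25} — namely the Takeuchi criterion reduced via \cite{HLM} and \cite{Hor} to integrality of $\tr g$ for the seven words in $\gamma_1^2,\gamma_2^2,\gamma_3^2$ and of $(\tr g)^2$ for the seven words in $\gamma_1,\gamma_2,\gamma_3$, with the involutions written explicitly in $\PGLtwoQ$. Your extra pre-filter $l=(m+n+4)^2/mn\in\NN$, $l\ge 5$ is only a harmless optimization and does not change the substance.
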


\begin{table}[!ht]
\centering
\begin{tabular}{ |c|c|c|c|c| } 
\hline
Word & Element (case 1) & Trace & Element (case 2) & Trace \\
 \hline
$\gamma_1^2$, $(\gamma_1)^2$  &  (11, 15, -3, -4)  &  7  &  (11, 15, -3, -4)  &  7 \\
\hline
$\gamma_2^2$, $(\gamma_2)^2$  &  (59, 45, -21, -16)  &  43  &  (59, 45, -21, -16)  &  43 \\
\hline
$\gamma_1^2\gamma_2^2$, $(\gamma_1\gamma_2)^2$  &  (314, 245, -91, -71)  &  243  &  (334, 255, -93, -71)  &  263 \\
\hline
$\gamma_3^2$, $(\gamma_3)^2$  &  (9, -20, -4, 9)  &  18  &  (9, -20, -4, 9)  &  18 \\
\hline
$\gamma_1^2\gamma_3^2$, $(\gamma_1\gamma_3)^2$  &  (29, -45, -9, 14)  &  43  &  (39, -85, -11, 24)  &  63 \\
\hline
$\gamma_2^2\gamma_3^2$, $(\gamma_2\gamma_3)^2$  &  (321, -575, -115, 206)  &  527  &  (351, -775, -125, 276)  &  627 \\
\hline
$\gamma_1^2\gamma_2^2\gamma_3^2$, $(\gamma_1\gamma_2\gamma_3)^2$  &  (1456, -2585, -423, 751)  &  2207  &  (1986, -4385, -553, 1221)  &  3207 \\
\hline
\end{tabular}

\medskip

\caption{Arithmeticity check for `diamond' jigsaws with $n=5$}
\label{table:n5}
\end{table}

\begin{table}[!ht]
\fontsize{9}{11}\selectfont
\centering
\hspace*{-0.1cm}
\begin{tabular}{ |c|c|c|c|c| } 
\hline
Word & Element (case 1) & Trace & Element (case 2) & Trace \\
 \hline
$\gamma_1^2$, $(\gamma_1)^2$  &  (11, 15, -3, -4)  &  7  &  (11, 15, -3, -4)  &  7 \\
\hline
$\gamma_2^2$, $(\gamma_2)^2$  &  (875, 825, -297, -280)  &  119  &  (875, 825, -297, -280)  &  119 \\
\hline
$\gamma_1^2\gamma_2^2$, $(\gamma_1\gamma_2)^2$  &  (4990, 4725, -1431, -1355)  &  727  &  (5170, 4875, -1437, -1355)  &  763 \\
\hline
$\gamma_3^2$, $(\gamma_3)^2$  &  (25, -300, -12, 145)  &  34  &  (25, -300, -12, 145)  &  34 \\
\hline
$\gamma_1^2\gamma_3^2$, $(\gamma_1\gamma_3)^2$  &  (65, -525, -21, 170)  &  47  &  (95, -1125, -27, 320)  &  83 \\
\hline
$\gamma_2^2\gamma_3^2$, $(\gamma_2\gamma_3)^2$  &  (1945, -16575, -663, 5650)  &  1519  &  (2395, -28575, -813, 9700)  &  2419 \\
\hline
$\gamma_1^2\gamma_2^2\gamma_3^2$, $(\gamma_1\gamma_2\gamma_3)^2$  &  (9080, -77025, -2607, 22115)  &  6239  &  (14150, -168825, -3933, 46925)  &  12215 \\
\hline
\end{tabular}

\medskip

\caption{Arithmeticity check for `diamond' jigsaws with $n=25$}
\label{table:n25}
\end{table}

\begin{table}[!ht]
\fontsize{6}{11}\selectfont
\hspace*{-2.8cm}
\begin{tabular}{ |c|c|c|c|c| } 
\hline
Word & Element (case 1) & Trace & Element (case 2) & Trace \\
\hline
$\gamma_1^2$, $(\gamma_1)^2$  &  (24, -5, 5, -1)  &  23  &  (24, -5, 5, -1)  &  23 \\
\hline
$\gamma_2^2$, $(\gamma_2)^2$  &  (109, 33, 33, 10)  &  119  &  (109, 33, 33, 10)  &  119 \\
\hline
$\gamma_1^2\gamma_2^2$, $(\gamma_1\gamma_2)^2$  &  (2349, 700, 500, 149)  &  2498  &  (2451, 742, 512, 155)  &  2606 \\
\hline
$\gamma_3^2$, $(\gamma_3)^2$  &  (153, 112, 56, 41)  &  194  &  (153, 112, 56, 41)  &  194 \\
\hline
$\gamma_1^2\gamma_3^2$, $(\gamma_1\gamma_3)^2$  &  (3008, 2183, 649, 471)  &  3479  &  (3392, 2483, 709, 519)  &  3911 \\
\hline
$\gamma_2^2\gamma_3^2$, $(\gamma_2\gamma_3)^2$  &  (18177, 13261, 5513, 4022)  &  22199  &  (18525, 13561, 5609, 4106)  &  22631 \\
\hline
$\gamma_1^2\gamma_2^2\gamma_3^2$, $(\gamma_1\gamma_2\gamma_3)^2$  &  (379325, 276716, 80764, 58917)  &  438242  &  (416555, 304934, 87016, 63699)  &  480254 \\
\hline
$\gamma_4^2$, $(\gamma_4)^2$  &  (76, 99, 33, 43)  &  119  &  (76, 99, 33, 43)  &  119 \\
\hline
$\gamma_1^2\gamma_4^2$, $(\gamma_1\gamma_4)^2$  &  (1311, 1681, 287, 368)  &  1679  &  (1659, 2161, 347, 452)  &  2111 \\
\hline
$\gamma_2^2\gamma_4^2$, $(\gamma_2\gamma_4)^2$  &  (8689, 11220, 2640, 3409)  &  12098  &  (9373, 12210, 2838, 3697)  &  13070 \\
\hline
$\gamma_1^2\gamma_2^2\gamma_4^2$, $(\gamma_1\gamma_2\gamma_4)^2$  &  (175482, 226567, 37367, 48245)  &  223727  &  (210762, 274555, 44027, 57353)  &  268115 \\
\hline
$\gamma_3^2\gamma_4^2$, $(\gamma_3\gamma_4)^2$  &  (15048, 19519, 5513, 7151)  &  22199  &  (15324, 19963, 5609, 7307)  &  22631 \\
\hline
$\gamma_1^2\gamma_3^2\gamma_4^2$, $(\gamma_1\gamma_3\gamma_4)^2$  &  (280331, 363593, 60499, 78468)  &  358799  &  (339731, 442577, 71011, 92508)  &  432239 \\
\hline
$\gamma_2^2\gamma_3^2\gamma_4^2$, $(\gamma_2\gamma_3\gamma_4)^2$  &  (1751001, 2271148, 531080, 688841)  &  2439842  &  (1855413, 2417098, 561782, 731849)  &  2587262 \\
\hline
$\gamma_1^2\gamma_2^2\gamma_3^2\gamma_4^2$, $(\gamma_1\gamma_2\gamma_3\gamma_4)^2$  &  (35375534, 45884051, 7531999, 9769425)  &  45144959  &  (41721002, 54351107, 8715283, 11353641)  &  53074643 \\
\hline
$\gamma_5^2$, $(\gamma_5)^2$  &  (9, 25, 5, 14)  &  23  &  (9, 25, 5, 14)  &  23 \\
\hline
$\gamma_1^2\gamma_5^2$, $(\gamma_1\gamma_5)^2$  &  (125, 308, 28, 69)  &  194  &  (191, 530, 40, 111)  &  302 \\
\hline
$\gamma_2^2\gamma_5^2$, $(\gamma_2\gamma_5)^2$  &  (942, 2419, 287, 737)  &  1679  &  (1146, 3187, 347, 965)  &  2111 \\
\hline
$\gamma_1^2\gamma_2^2\gamma_5^2$, $(\gamma_1\gamma_2\gamma_5)^2$  &  (18035, 46259, 3841, 9852)  &  27887  &  (25769, 71663, 5383, 14970)  &  40739 \\
\hline
$\gamma_3^2\gamma_5^2$, $(\gamma_3\gamma_5)^2$  &  (1769, 4661, 649, 1710)  &  3479  &  (1937, 5393, 709, 1974)  &  3911 \\
\hline
$\gamma_1^2\gamma_3^2\gamma_5^2$, $(\gamma_1\gamma_3\gamma_5)^2$  &  (30301, 79788, 6540, 17221)  &  47522  &  (42943, 119562, 8976, 24991)  &  67934 \\
\hline
$\gamma_2^2\gamma_3^2\gamma_5^2$, $(\gamma_2\gamma_3\gamma_5)^2$  &  (199466, 525323, 60499, 159333)  &  358799  &  (234530, 652979, 71011, 197709)  &  432239 \\
\hline
$\gamma_1^2\gamma_2^2\gamma_3^2\gamma_5^2$, $(\gamma_1\gamma_2\gamma_3\gamma_5)^2$  &  (3817723, 10054479, 812853, 2140756)  &  5958479  &  (5273665, 14682951, 1101639, 3067186)  &  8340851 \\
\hline
$\gamma_4^2\gamma_5^2$, $(\gamma_4\gamma_5)^2$  &  (1149, 3100, 500, 1349)  &  2498  &  (1179, 3286, 512, 1427)  &  2606 \\
\hline
$\gamma_1^2\gamma_4^2\gamma_5^2$, $(\gamma_1\gamma_4\gamma_5)^2$  &  (17534, 47261, 3841, 10353)  &  27887  &  (25736, 71729, 5383, 15003)  &  40739 \\
\hline
$\gamma_2^2\gamma_4^2\gamma_5^2$, $(\gamma_2\gamma_4\gamma_5)^2$  &  (122979, 331573, 37367, 100748)  &  223727  &  (145407, 405265, 44027, 122708)  &  268115 \\
\hline
$\gamma_1^2\gamma_2^2\gamma_4^2\gamma_5^2$, $(\gamma_1\gamma_2\gamma_4\gamma_5)^2$  &  (2346761, 6327224, 499720, 1347321)  &  3694082  &  (3269633, 9112820, 683008, 1903617)  &  5173250 \\
\hline
$\gamma_3^2\gamma_4^2\gamma_5^2$, $(\gamma_3\gamma_4\gamma_5)^2$  &  (220445, 594476, 80764, 217797)  &  438242  &  (237731, 662582, 87016, 242523)  &  480254 \\
\hline
$\gamma_1^2\gamma_3^2\gamma_4^2\gamma_5^2$, $(\gamma_1\gamma_3\gamma_4\gamma_5)^2$  &  (3766462, 10157001, 812853, 2192017)  &  5958479  &  (5270464, 14689353, 1101639, 3070387)  &  8340851 \\
\hline
$\gamma_2^2\gamma_3^2\gamma_4^2\gamma_5^2$, $(\gamma_2\gamma_3\gamma_4\gamma_5)^2$  &  (24833423, 66968273, 7531999, 20311536)  &  45144959  &  (28784207, 80224697, 8715283, 24290436)  &  53074643 \\
\hline
$\gamma_1^2\gamma_2^2\gamma_3^2\gamma_4^2\gamma_5^2$, $(\gamma_1\gamma_2\gamma_3\gamma_4\gamma_5)^2$  &  (474352033, 1279184712, 100996896, 272358241)  &  746710274  &  (647244553, 1803940548, 135205752, 376833049)  &  1024077602 \\
\hline
\end{tabular}
\medskip

\caption{Arithmeticity check for `star' jigsaws with $n=3$.}
\label{table:n3}
\end{table}

\begin{table}[!ht]
\fontsize{4}{11}\selectfont
\centering
\hspace*{-2.8cm}
\begin{tabular}{ |c|c|c|c|c| } 
\hline
Word & Element (case 1) & Trace & Element (case 2) & Trace \\
 \hline
$\gamma_1^2$, $(\gamma_1)^2$  &  (120, -11, 11, -1)  &  119  &  (120, -11, 11, -1)  &  119 \\
\hline
$\gamma_2^2$, $(\gamma_2)^2$  &  (219, 35, 25, 4)  &  223  &  (219, 35, 25, 4)  &  223 \\
\hline
$\gamma_1^2\gamma_2^2$, $(\gamma_1\gamma_2)^2$  &  (25865, 4104, 2376, 377)  &  26242  &  (26005, 4156, 2384, 381)  &  26386 \\
\hline
$\gamma_3^2$, $(\gamma_3)^2$  &  (281, 240, 48, 41)  &  322  &  (281, 240, 48, 41)  &  322 \\
\hline
$\gamma_1^2\gamma_3^2$, $(\gamma_1\gamma_3)^2$  &  (31048, 26413, 2867, 2439)  &  33487  &  (33192, 28349, 3043, 2599)  &  35791 \\
\hline
$\gamma_2^2\gamma_3^2$, $(\gamma_2\gamma_3)^2$  &  (61123, 52059, 6993, 5956)  &  67079  &  (63219, 53995, 7217, 6164)  &  69383 \\
\hline
$\gamma_1^2\gamma_2^2\gamma_3^2$, $(\gamma_1\gamma_2\gamma_3)^2$  &  (7116129, 6060760, 653720, 556769)  &  7672898  &  (7506893, 6411596, 688192, 587781)  &  8094674 \\
\hline
$\gamma_4^2$, $(\gamma_4)^2$  &  (184, 205, 35, 39)  &  223  &  (184, 205, 35, 39)  &  223 \\
\hline
$\gamma_1^2\gamma_4^2$, $(\gamma_1\gamma_4)^2$  &  (19599, 21707, 1813, 2008)  &  21607  &  (21695, 24171, 1989, 2216)  &  23911 \\
\hline
$\gamma_2^2\gamma_4^2$, $(\gamma_2\gamma_4)^2$  &  (39129, 43400, 4480, 4969)  &  44098  &  (41521, 46260, 4740, 5281)  &  46802 \\
\hline
$\gamma_1^2\gamma_2^2\gamma_4^2$, $(\gamma_1\gamma_2\gamma_4)^2$  &  (4533324, 5028005, 416455, 461899)  &  4995223  &  (4930380, 5493109, 451991, 503579)  &  5433959 \\
\hline
$\gamma_3^2\gamma_4^2$, $(\gamma_3\gamma_4)^2$  &  (59896, 66661, 10235, 11391)  &  71287  &  (60104, 66965, 10267, 11439)  &  71543 \\
\hline
$\gamma_1^2\gamma_3^2\gamma_4^2$, $(\gamma_1\gamma_3\gamma_4)^2$  &  (6508183, 7243123, 600989, 668856)  &  7177039  &  (7099543, 7909971, 650877, 725176)  &  7824719 \\
\hline
$\gamma_2^2\gamma_3^2\gamma_4^2$, $(\gamma_2\gamma_3\gamma_4)^2$  &  (12876529, 14330680, 1473200, 1639569)  &  14516098  &  (13522121, 15065700, 1543668, 1719881)  &  15242002 \\
\hline
$\gamma_1^2\gamma_2^2\gamma_3^2\gamma_4^2$, $(\gamma_1\gamma_2\gamma_3\gamma_4)^2$  &  (1491857772, 1660333661, 137048863, 152525827)  &  1644383599  &  (1605674172, 1788965309, 147199663, 164002819)  &  1769676991 \\
\hline
$\gamma_5^2$, $(\gamma_5)^2$  &  (3, 11, 1, 4)  &  7  &  (3, 11, 1, 4)  &  7 \\
\hline
$\gamma_1^2\gamma_5^2$, $(\gamma_1\gamma_5)^2$  &  (257, 696, 24, 65)  &  322  &  (349, 1276, 32, 117)  &  466 \\
\hline
$\gamma_2^2\gamma_5^2$, $(\gamma_2\gamma_5)^2$  &  (548, 1557, 63, 179)  &  727  &  (692, 2549, 79, 291)  &  983 \\
\hline
$\gamma_1^2\gamma_2^2\gamma_5^2$, $(\gamma_1\gamma_2\gamma_5)^2$  &  (61751, 175119, 5673, 16088)  &  77839  &  (82171, 302679, 7533, 27748)  &  109919 \\
\hline
$\gamma_3^2\gamma_5^2$, $(\gamma_3\gamma_5)^2$  &  (987, 3107, 169, 532)  &  1519  &  (1083, 4051, 185, 692)  &  1775 \\
\hline
$\gamma_1^2\gamma_3^2\gamma_5^2$, $(\gamma_1\gamma_3\gamma_5)^2$  &  (97113, 305384, 8968, 28201)  &  125314  &  (127925, 478508, 11728, 43869)  &  171794 \\
\hline
$\gamma_2^2\gamma_3^2\gamma_5^2$, $(\gamma_2\gamma_3\gamma_5)^2$  &  (198084, 622973, 22663, 71275)  &  269359  &  (243652, 911389, 27815, 104043)  &  347695 \\
\hline
$\gamma_1^2\gamma_2^2\gamma_3^2\gamma_5^2$, $(\gamma_1\gamma_2\gamma_3\gamma_5)^2$  &  (22248239, 69970215, 2043825, 6427784)  &  28676023  &  (28932275, 108222207, 2652357, 9921236)  &  38853511 \\
\hline
$\gamma_4^2\gamma_5^2$, $(\gamma_4\gamma_5)^2$  &  (713, 2312, 136, 441)  &  1154  &  (757, 2844, 144, 541)  &  1298 \\
\hline
$\gamma_1^2\gamma_4^2\gamma_5^2$, $(\gamma_1\gamma_4\gamma_5)^2$  &  (66964, 216825, 6195, 20059)  &  87023  &  (89256, 335329, 8183, 30743)  &  119999 \\
\hline
$\gamma_2^2\gamma_4^2\gamma_5^2$, $(\gamma_2\gamma_4\gamma_5)^2$  &  (138039, 447035, 15805, 51184)  &  189223  &  (170823, 641771, 19501, 73264)  &  244087 \\
\hline
$\gamma_1^2\gamma_2^2\gamma_4^2\gamma_5^2$, $(\gamma_1\gamma_2\gamma_4\gamma_5)^2$  &  (15482321, 50138640, 1422288, 4606001)  &  20088322  &  (20284249, 76206616, 1859552, 6986217)  &  27270466 \\
\hline
$\gamma_3^2\gamma_4^2\gamma_5^2$, $(\gamma_3\gamma_4\gamma_5)^2$  &  (227105, 735768, 38808, 125729)  &  352834  &  (247277, 929004, 42240, 158693)  &  405970 \\
\hline
$\gamma_1^2\gamma_3^2\gamma_4^2\gamma_5^2$, $(\gamma_1\gamma_3\gamma_4\gamma_5)^2$  &  (22270852, 72152113, 2056571, 6662787)  &  28933639  &  (29208600, 109734857, 2677807, 10060351)  &  39268951 \\
\hline
$\gamma_2^2\gamma_3^2\gamma_4^2\gamma_5^2$, $(\gamma_2\gamma_3\gamma_4\gamma_5)^2$  &  (45475199, 147328595, 5202805, 16855824)  &  62331023  &  (55632063, 209006131, 6350885, 23859872)  &  79491935 \\
\hline
$\gamma_1^2\gamma_2^2\gamma_3^2\gamma_4^2\gamma_5^2$, $(\gamma_1\gamma_2\gamma_3\gamma_4\gamma_5)^2$  &  (5101478761, 16527551232, 468645120, 1518296281)  &  6619775042  &  (6605987825, 24818277128, 605601808, 2275207569)  &  8881195394 \\
\hline
\end{tabular}

\medskip

\caption{Arithmeticity check for `star' jigsaws with $n=9$.}
\label{table:n9}
\end{table}

\section{Cutting sequences }

Here we show some of the cutting sequences obtained by coding the algorithms in section \ref{s:PSAlgorithms}.

\medskip

\noindent {\bf Algorithm \ref{alg:naive}:} For the group $\Gamma(5/3,3/2,2/5)$,
$$CS(\sqrt{2})= \overline{\seqsplit{3212323213213231}}.$$
$CS(\frac{-1+\sqrt{5}}{2})$ is  periodic  with fundamental period

\noindent \seqsplit{ 323231321321321321321321321321321321321321321321321321321321321321321321321321321321321321321321321321321321321321321321321321321321321321321321321321321321321321321321321321321321321321232323212123123123123123123121}.

\noindent $CS (\sqrt{3})$ has cutting sequence starting with

\noindent \seqsplit{ 3121312131231213213123121323123123123123213213213213213213213213121312312312132132132132132132132132131312312312312312312312312312312312312312312312312312312312312312312312312312312312312312312312312312312312312312312312312312312312312312312312312312312312312312312312312312312312...}.
where no period was detected up to length 10000.

\medskip

\noindent {\bf Algorithm \ref{alg:naivetransl}:} For the group $\Gamma(5/3,3/2,2/5)$, 
$$\alpha=\alpha_0=\frac{-1+\sqrt{5}}{2} = \overline{T_L\iota_1 T_L\iota_1 \iota_3 \iota_1 T_L^{60}\iota_2 T_L\iota_1 T_L\iota_1 T_L\iota_2 T_L^{-7}\iota_3 \iota_1} \left(\frac{-1+\sqrt{5}}{2}\right)$$
which gives the periodic word of period 216 in $\iota_1,\iota_2,\iota_3$ obtained by Algorithm \ref{alg:naive} after expansion and reduction.

\medskip

\noindent {\bf  Algorithm \ref{algo:naivepseudo}:} For the diamond $\SSS(1,2)$ jigsaw with vertices $\{\infty, -1,0,1\}$:\\
$CS(\sqrt{7})$ is periodic with  fundamental period

\noindent \seqsplit{421412143214123123412343212342341431234341243234134121434123412412321432123213212121}.

\medskip

\noindent $CS(\sqrt{2})$ has  cutting sequence starting with

\noindent \seqsplit{ 412412341323421341234123412321421231234123412341234123412341212124321234123141341231234123412342343212434...}.

\noindent where no period was detected up to length 10000.
\medskip

\noindent $CS(\sqrt{3})$ has  cutting sequence starting with

\noindent \seqsplit{ 413123432313212141412341234143424214321342143214314214212341234123412341234123412341234121232121241232142...}.

\noindent where no period was detected up to length 10000.
\medskip

\end{appendices}

\end{document}